\documentclass[12pt]{amsart}
\usepackage{url}
\usepackage[margin=1in]{geometry} 
\usepackage{color}    
\usepackage{latexsym, amssymb, amscd, amsfonts, amsthm, amsmath, graphicx}
\usepackage{url}
\usepackage[margin=1in]{geometry}  % set the margins to 1in on all sides
\usepackage{epsfig}
\usepackage{color}                 % better theorem environments
\usepackage{epsfig}
\usepackage{mathrsfs}

% various theorems, numbered by section
\theoremstyle{plain}
\newtheorem{thm}{Theorem}[section]

\newtheorem{theorem}{Theorem}[section]
\newtheorem*{theorem*}{Theorem}
\newtheorem*{lemma*}{Lemma}
\newtheorem{proposition}[thm]{Proposition}
\newtheorem{lemma}[thm]{Lemma}
\newtheorem{corollary}[thm]{Corollary}
\theoremstyle{definition}

\newtheorem{example}{Example}
\theoremstyle{remark}

\newtheorem{remark}{Remark}

\DeclareMathOperator{\id}{\rm{Id}}
\newcommand{\esssup}{\operatorname{ess\;sup}}

\newcommand{\vtheta}{{\boldsymbol{\theta}}}
\newcommand{\ve}{{\mathbf{e}}}
\newcommand{\loc}{{\mathrm{loc}}}
\newcommand{\tr}{{\mathrm{Tr}}}
  % for bolding symbols
\newcommand{\R}{\mathbb{R}}      % for Real numbers
\newcommand{\Z}{\mathbb{Z}}      % for integer
\newcommand{\C}{\mathbb{C}}
\newcommand{\T}{\mathbb{T}}
\newcommand{\N}{\mathbb{N}}
\newcommand{\bR}{\mathbb{R}}      % for Real numbers
      % for integer

\newcommand{\bT}{\mathbb{T}}

\newcommand{\sB}{\mathscr{B}}

		 %neighborhood
    %filtration
    %matrix ensemble
		 %occupation number
		 %set of partitions
\newcommand{\cM}{\mathscr{M}}
\newcommand{\cC}{\mathscr{C}}
\newcommand{\E}{\mathbb{E}}     %mathematical expectation
     %probability

\newcommand{\Tr}{\rm{Tr }}     %trace
  % for coordinate vectorfields
  % for partial derivatives

  % for coordinate vector fields
  % for partial derivatives

\newcommand{\TV}{{\rm{TV}}}

\newcommand{\Id}{{\text{Id}}}
\newcommand{\va}{{\mathbf{a}}}
\newcommand{\vb}{{\mathbf{b}}}
\newcommand{\ta}{{\tilde{a}}}
\newcommand{\tb}{{\tilde{b}}}
\newcommand{\tO}{{\tilde{0}}}
\newcommand{\vs}{{\mathbf{s}}}

\newcommand{\vO}{{\mathbf{0}}}

\newcommand{\vz}{{\mathbf{z}}}
\newcommand{\cR}{{\mathscr{R}}}

\newcommand{\zed}{\mathbb{Z}}
\newcommand{\rs}{\mathrm{s}}
\newcommand{\one}{\mathbf{1}}
\newcommand{\balpha}{{\boldsymbol{\alpha}}}
\newcommand{\bomega}{{\boldsymbol{\omega}}}
\newcommand{\bepsilon}{{\boldsymbol{\epsilon}}}

\title[The cut-off phenomenon in Rosenthal's walk]{Cut-off phenomenon in the uniform plane Kac walk} 
\author{Bob Hough and Yunjiang Jiang}
\subjclass[2010]{Primary 60J05, 60B15, 20C15, 43A75}
\keywords{Random walk on a group, cut-off phenomenon, character theory, saddle point analysis}

\begin{document}

\begin{abstract}We consider an analogue of the Kac random walk on the special orthogonal group $SO(N)$, in which at each step a random rotation is performed in a randomly chosen 2-plane of $\bR^N$. We obtain sharp
  asymptotics for the rate of convergence in total variance distance,
  establishing a cut-off phenomenon in the large $N$ limit. In the special case where the angle of rotation is deterministic this confirms a conjecture of Rosenthal \cite{Rosenthal}. Under mild conditions we also establish a cut-off for  convergence of the walk to stationarity under the $L^2$ norm.  Depending on the distribution of the randomly chosen angle of rotation, several surprising features emerge.  For instance, it is sometimes the case that the mixing times differ in the total variation and $L^2$ norms. Our estimates use an integral representation of the characters of the special orthogonal group together with saddle point analysis.  
\end{abstract}

\maketitle
  
\section{Introduction}
Asymptotic analysis of the mixing time of Markov chains is an emerging field, and while discrete space Markov chains have a healthy growing literature, there are still comparatively few continuous state chains where the mixing time has been  determined.  Among the most natural such chains is the random walk on the special orthogonal group introduced by Mark Kac \cite{kac}, modeling the velocities of a large number $N$ of particles making elastic collisions.  Briefly, at each step of the walk a uniform angle $\theta \in \bR/2\pi\zed = \T$ is chosen, and two velocities $v_i$ and $v_j$ are chosen at random and updated according to the rule
\begin{align*}
 v_i' &= v_i\cos \theta  + v_j\sin \theta \\
 v_j' &= -v_i\sin \theta  + v_j\cos \theta .
\end{align*}
That is, the vector of velocities $(v_1, ..., v_N)$ is multiplied by a matrix from the special orthogonal group $SO(N)$ which consists of a rotation by a randomly chosen angle, in the randomly chosen coordinate 2-plane, $e_i \wedge e_j$.  

The asymptotic mixing time of Kac's walk has received quite a bit of attention, going back at least to the paper of Diaconis and Saloff-Coste \cite{diaconis_saloff-coste}, but remains only incompletely understood.  Besides the evident difficulty that the steps do not commute, the most significant challenge is that the bulk of the spectrum of the transition kernel is not known (although the spectral gap has been determined, see  \cite{maslen}, \cite{janvresse}, \cite{carlen_carvalho_loss}).  Even granting the spectrum,  the walk's density is not in $L^2$ after any finite number of steps, which complicates the use of standard spectral techniques.  In fact, only recently the second author \cite{TVKac} has given the first polynomial bound for the mixing time in the total variation metric, but the bound $O\left(N^5(\log N)^2\right)$ it is still far from the expected  bound $O\left(N^2\right)$.

The purpose of this article is to study a related but simplified model of the Kac walk  for which we are able to prove precise results.  In the \emph{uniform plane Kac walk} or \emph{Rosenthal's walk} at each step we perform a rotation by a random angle $\theta$, but now with the plane of rotation chosen uniformly from the space of $2$-planes in $\R^N$, $\Lambda^2(S^{N-1})$. Formally, let $\xi$ be a Borel probability measure on $\T$ and define the block diagonal matrix
\begin{align*}
 R(1,2;\theta) := \left( \begin{array}{ccc} \cos \theta & -\sin \theta
     &  \\ \sin 
\theta & \cos \theta &\\ & & I_{N-2}  \end{array} \right).
\end{align*}
The transition kernel of our walk  is the map $P_\xi: L^2(SO(N)) \to L^2(SO(N))$,
\[
 P_\xi f(X) = \int_{\T} \int_{SO(N)} f\left(U R(1,2;\theta) U^{-1} X\right) d\nu(U) d\xi(\theta)
\]
where $d\nu$ denotes the probability Haar measure on $SO(N)$. To complete the analogy, if we replace the integral over $U \in SO(N)$ by an integral over the permutation matrices, we get exactly the Kac walk.

Throughout this work we  restrict attention to odd $(N = 2n+1)$ orthogonal groups.  We expect that the results carry over to even orthogonal groups with straightforward modifications, but have not checked the details carefully.  Note that $R(1,2;\theta)$ and $R(1,2;-\theta)$ are conjugate in $SO(N)$ for $N \geq 3$, so henceforth we  restrict attention to measures $\xi$ supported in $\bT_0 = [0, \pi]$.

Given two Borel probability measures $\mu$ and $\nu$ defined on the common space $(\Omega = SO(2n+1), \sB)$ with $\sB$ the Borel $\sigma$-algebra, recall  that the total variation distance between them is induced from a norm $\|.\|_\TV := \|.\|_{\TV(\Omega)}$ on signed measures defined by
\begin{align*}
\| \mu - \nu \|_\TV  = \sup_{B \in \sB} \mu(B) - \nu(B).
\end{align*}
This is also known as the $L^1$ distance since when $\left|\frac{d\mu}{d\nu}\right| < \infty$, 
\begin{align*}
\| \mu - \nu \|_\TV = \int_\Omega \left|\frac{d\mu}{d\nu}-1\right| d \nu.
\end{align*}
From a probabilistic point of view, the total variation distance is a natural one, since it is well-defined on all measures. Throughout, we take $\nu$ to be the Haar probability measure on $SO(2n+1)$. In the case that $\mu$ has an $L^2$ density with respect to $\nu$, we
 define their $L^2$ distance 
\[ \|\mu - \nu\|_{L^2} = \left( \int_\Omega \left(\frac{d|\mu -
      \nu|}{d\nu}\right)^2 d\nu\right)^{\frac{1}{2}}. 
\] 
Otherwise we set the $L^2$ distance to be $\infty$.
\begin{theorem} \label{L^1 cut-off}
 Let $\xi$ be any Borel probability  measure on $\T_0$
excluding the delta measure at 0.
Define $\sigma(\theta) = \sin\frac{\theta}{2}$ and
\[
 \xi\left(\sigma^2\right) = \int_{\T_0} \sigma^2(\theta) d\xi(\theta).
\]
The uniform plane Kac walk $P_\xi$ with initial state at the identity on $SO(2n+1)$ has
total variation cut-off at $t = \frac{n \log n}{2 \xi(\sigma^2)}$. Precisely, there exists $f: \bR \to [0,1]$ satisfying $\lim_{|c| \to \infty} f(c) = 0$, such that,  uniformly in $n$, for 
$t =\frac{ n( \log n+c)}{2\xi(\sigma^2)}$ we have
\begin{align*}
 \left| \left\|\delta_\Id \cdot  P_\xi^t  -\nu \right\|_{\TV} -\one(c < 0)\right| < f(c). 
\end{align*}
\end{theorem}

The above result establishes a  cut-off phenomenon in total variation for the uniform plane Kac walk, in which the  distance to uniform transitions in a window that is asymptotically small compared to the mixing time.  For general Markov chains, cut-off phenomena were first proved in discrete models such as the nearest neighbor walk on the hypercube $\Z^n/(2\Z)^n$ \cite{Aldous} and the random transposition card shuffling model \cite{DiaconisShahshahani}; see also \cite{cutoff} for an extensive survey. Note that there are also common random walks in which the cut-off phenomenon provably does not occur, for instance, in nearest neighbor random walk on the cycle $\Z/p\Z$.  It is an open and open-ended question  whether the cut-off phenomenon represents a special or generic behavior of Markov chains.  This is partly difficult to answer because models in which the distance to uniform can be accurately analyzed are rare.

The chief simplifying feature of our walk is that the transition kernel is invariant under conjugation, so that the spectrum is completely described by the character theory of $SO(2n+1)$, an observation that goes back to Diaconis and Shahshahani in \cite{DiaconisShahshahani}. Our analysis builds on that of Rosenthal \cite{Rosenthal}, who studied the walk in the special case where $\xi$ is a point mass at a fixed angle $\theta$, proving the lower bound of our theorem in this case, and the  more difficult upper bound for $\theta = \pi$, see also work of Porod on a related walk \cite{Porod}.  
The first stage in our proof of Theorem \ref{L^1 cut-off} is to complete Rosenthal's upper bound analysis for any fixed angle, with strong uniformity in the angle $\theta$.  
\begin{theorem}\label{fixed_theta_theorem}
 Let $\theta = \theta(n)$ vary with $n$ in such a way that $\frac{\log n}{\sqrt{n}} \leq \theta \leq \pi$.  Let $P_\theta$ denote the transition kernel of our walk in the special case when $\xi$ is a point mass at $\theta$.   There exists $f:\bR \to [0,1]$ with $f(c)$ tending to 0 as $|c|\to \infty$ such that, for $t = \frac{n (\log n + c)}{ 2\sigma^2(\theta)}$, uniformly
in $n$ and $\theta(n)$,
\begin{align*}
 \left| \left\| \delta_\Id \cdot P_\theta^t - \nu\right\|_\TV - \one(c < 0) \right| < f(c). 
\end{align*}
\end{theorem}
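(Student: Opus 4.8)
The plan is to deduce the upper bound in Theorem~\ref{fixed_theta_theorem} from a uniform $L^2$ (chi-squared) estimate, and to import the matching lower bound from Rosenthal \cite{Rosenthal}. Write $\mu_t = \delta_\Id P_\theta^t$. Since $P_\theta$ commutes with conjugation by $SO(N)$, the measure $\mu_t$ is a class measure, its density against Haar expands in irreducible characters as $\tfrac{d\mu_t}{d\nu} = \sum_\lambda (\dim\lambda)\, r_\lambda(\theta)^t\, \overline{\chi_\lambda}$ with
\[
 r_\lambda(\theta) \;=\; \frac{\chi_\lambda\!\left(R(1,2;\theta)\right)}{\dim\lambda}
\]
the eigenvalue of $P_\theta$ on $\chi_\lambda$ (this is the observation of Diaconis and Shahshahani, \cite{DiaconisShahshahani}), and orthonormality of characters gives the exact identity
\[
 4\,\|\mu_t - \nu\|_{\TV}^2 \;\le\; \Big\|\tfrac{d\mu_t}{d\nu} - 1\Big\|_{L^2(\nu)}^2 \;=\; \sum_{\lambda \neq 0} (\dim\lambda)^2\, |r_\lambda(\theta)|^{2t}.
\]
The entire upper bound thus reduces to showing that, at $t = N(\log N + c)/4\sigma^2(\theta)$, the right‑hand side is $\le e^{e^{-2c}}-1 + o(1)$ with the $o(1)$ uniform in $N$ and in $\theta\in[\log N/\sqrt N,\pi]$. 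For the lower bound I would use the test function $X\mapsto \tr(X)$, an eigenfunction of $P_\theta$ with eigenvalue $1-4\sigma^2(\theta)/N$ whose law under $\mu_t$ has mean $N(1-4\sigma^2(\theta)/N)^t\to e^{-c}$ and bounded variance, so that $\|\mu_t-\nu\|_{\TV}$ stays bounded away from $0$ for $c<0$ (indeed $\to 1$ as $c\to-\infty$), exactly as in Rosenthal; this direction is essentially insensitive to $\theta$.

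\textbf{The character estimates, and the main obstacle.} Everything hinges on precise, uniform control of the eigenvalues $r_\lambda(\theta)$ and the dimensions $\dim\lambda$, and this is where the contour‑integral and saddle‑point machinery must do its work. Parametrizing $\lambda$ by its signature $(\lambda_1\ge\lambda_2\ge\cdots)$ with $|\lambda|=\sum_i|\lambda_i|$, I would record that $\dim\lambda = \big(c_\lambda + O(1/N)\big)N^{|\lambda|}$ with $c_\lambda = f^\lambda/|\lambda|!$ ($f^\lambda$ the dimension of the associated $S_{|\lambda|}$‑representation), together with the crude global bound $\dim\lambda\le (N+|\lambda|)^{|\lambda|}$. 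For the eigenvalues: since $R(1,2;\theta)$ lies in the maximal torus with angle coordinates $(\theta,0,\dots,0)$, specializing the Weyl character formula produces a one‑variable contour representation of $\chi_\lambda(R(1,2;\theta))$, and a saddle‑point analysis as $N\to\infty$ should yield, for $|\lambda|$ not too large, the second‑order expansion
\[
 r_\lambda(\theta) \;=\; 1 \;-\; \frac{4\sigma^2(\theta)\,|\lambda|}{N} \;+\; E_\lambda(\theta,N),
\]
with $E_\lambda$ controlled so that $t\,E_\lambda = o(1)$ and $t\,(\sigma^2(\theta)|\lambda|/N)^2 = o(1)$ throughout the range, plus a globally valid decay bound of the shape $|r_\lambda(\theta)|\le \exp\!\big(-4\sigma^2(\theta)\min(|\lambda|,\delta N)/N + o(\cdot)\big)$ for $|\lambda|\le \delta N$ and a further bound forcing $|r_\lambda(\theta)|\to 0$ as $|\lambda|\to\infty$. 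Establishing these bounds --- controlling the saddle uniformly for $\theta$ anywhere from $\log N/\sqrt N$ up to $\pi$ --- is the hard part. The difficult regime is \emph{small} $\theta$: there $\sigma^2(\theta)$ is as small as $\asymp (\log N)^2/N$, the walk runs for $t\asymp N^2/\log N$ steps, and an error of size $\omega(\log N/N^2)$ in $r_\lambda(\theta)$ would already be fatal after multiplication by $t$ --- a genuine second‑order expansion of the character ratio is unavoidable, and this is exactly what pins the hypothesis $\theta\ge \log N/\sqrt N$.

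\textbf{Assembling the bound.} Granting the estimates above, I would split the character sum at a level $L=L(N)\to\infty$ slowly enough that the error terms stay negligible for $|\lambda|\le L$. On the main part, the expansion of $r_\lambda(\theta)$ together with $2t\cdot 4\sigma^2(\theta)/N = 2(\log N + c)$ gives $(\dim\lambda)^2|r_\lambda(\theta)|^{2t} = c_\lambda^2\,e^{-2|\lambda|c}(1+o(1))$, so summing first over all signatures of a given size and using $\sum_{\lambda\vdash m}(f^\lambda)^2 = m!$,
\[
 \sum_{0<|\lambda|\le L} (\dim\lambda)^2\, |r_\lambda(\theta)|^{2t} \;\longrightarrow\; \sum_{m\ge 1} e^{-2mc}\!\!\sum_{\lambda\vdash m} c_\lambda^2 \;=\; \sum_{m\ge 1}\frac{e^{-2mc}}{m!} \;=\; e^{e^{-2c}}-1 .
\]
For the tail $|\lambda|>L$ one plays the decay bound on $|r_\lambda(\theta)|$ against $\dim\lambda\le (N+|\lambda|)^{|\lambda|}$ and the subexponential count of signatures of size $m$; the character decay must be sharp here --- the crude first‑order rate is exactly borderline against the dimension growth --- so the precise saddle‑point output is what makes the tail both finite (equivalently, confirms that $\mu_t$ really lies in $L^2$ at these times, unlike the classical Kac walk) and $o(1)$ as $N\to\infty$. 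Combining, $\|d\mu_t/d\nu - 1\|_{L^2}^2 \le e^{e^{-2c}}-1 + o(1)$, so $\|\mu_t-\nu\|_{\TV}\le \tfrac12\sqrt{e^{e^{-2c}}-1} + o(1)\to 0$ as $c\to\infty$; with Rosenthal's lower bound this yields the cutoff at $t = N\log N/4\sigma^2(\theta)$, uniformly in $N$ and $\theta$, with the claimed profile $f(c)\to 0$ as $|c|\to\infty$.
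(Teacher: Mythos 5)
Your overall strategy --- bound total variation by the $L^2$ (chi-squared) norm via the Upper Bound Lemma, then sum $d_\lambda^2 |r_\lambda|^{2t}$, and cite Rosenthal for the lower bound --- is exactly the paper's. The difference is in how the character sum is controlled, and here your proposal has genuine gaps rather than merely a different route.

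First, the claimed ``globally valid'' bound $|r_\lambda(\theta)| \le \exp\bigl(-4\sigma^2(\theta)\min(|\lambda|,\delta N)/N + o(\cdot)\bigr)$ is false. From Lemma \ref{small_char_ratio_formula}, $\log r_\va \approx -E_1\sigma^2/n^2$ where $E_1 = \sum_i a_i(a_i+2i-1)$, which depends on where the parts sit, not only on $|\lambda|=\sum a_i$. Taking $\va=(1,1,\dots,1)$ with $|\lambda|=n$ gives $E_1 = n^2+n$, hence $r_\va \approx e^{-\sigma^2}$, whereas your bound asserts $\le e^{-2\sigma^2(1+o(1))}$. Because the dimension of this $\va$ (roughly $2^{2n}$) is also far below your crude bound $(N+|\lambda|)^{|\lambda|}$, the failure is not caught by a compensating slack in the dimension; with $(N+n)^n$ as the dimension bound and $e^{-\sigma^2}$ as the character ratio, your tail estimate at this $\va$ gives $d^2|r|^{2t} \le e^{N\log N/2 - Nc/2}\to\infty$, i.e., the tail blows up. What the paper actually proves is the \emph{coupled} inequality $\log|r_\va(\theta)| \le -\tfrac{2\sigma^2 \log d_\va}{n(\log n + C)}$ (Proposition \ref{character_ratio_bound_proposition}), which matches the character ratio against the dimension representation-by-representation; this single inequality, together with the summability estimate $\sum_{\va\neq\vO} d_\va^{-c/\log n} = O(e^{-c/8})$ (Proposition \ref{sum_of_dimension_bound}), is the engine of the upper bound and is far from either of your two decoupled bounds.

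Second, your ``further bound forcing $|r_\lambda|\to 0$ as $|\lambda|\to\infty$'' is left unspecified, but this is the bulk of the paper's work: Sections \ref{small_section}--\ref{close_to_pi} split into small, moderate, large (controlled-growth and giant) representations, and the near-$\pi$ range, each requiring a different argument. Similarly, the saddle-point expansion of $r_\lambda$ is asserted (``should yield'') rather than derived; uniformity in $\theta$ down to $\theta \asymp \log N/\sqrt{N}$ is precisely what Section \ref{Sketch} and Lemma \ref{critical point} establish, and the range $\theta\in[\pi-(\log n)^2/n,\pi]$ needs a separate argument because the saddle point degenerates there (Section \ref{close_to_pi}).

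One genuine point of contrast worth noting: your Poissonization step (summing over $\lambda\vdash m$ via $\sum_{\lambda\vdash m}(f^\lambda)^2 = m!$ to extract $e^{e^{-2c}}-1$) targets a sharper cutoff profile than the paper proves; the paper's deduction only yields $\|\cdot\|_{\TV}^2 = O(e^{-c/8})$, which suffices for the statement but gives a weaker rate. If the uniform second-order character asymptotics for fixed $|\lambda|$ could be pushed through with errors $e^{O(|\lambda|)}$ (not $1+o(1)$ --- the factor $N=2n+1$ vs.\ $n$ and the $1+O(|\lambda|/n)$ in $E_1$ each contribute $e^{O(|\lambda|)}$), one would still recover $\sum_m e^{(K-2c)m}/m! = e^{e^{K-2c}}-1$, which is a legitimate improvement over the paper's $O(e^{-c/8})$. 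But that is an add-on to, not a substitute for, the representation-by-representation coupling of $r_\va$ and $d_\va$ that handles the tail, and that coupling is the part your proposal is missing.
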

  As in the special case $\theta = \pi$ treated by Rosenthal, the upper bound of Theorem \ref{fixed_theta_theorem} is obtained by bounding the total variation norm with the $L^2$ norm, which translates the problem into bounding a sum of character ratios, see \cite{PD} for similar computations in
the finite 
group setting.  Although this analysis does not go through for the upper bound in Theorem \ref{L^1 cut-off} because the density need not be in $L^2$,  we are able to sidestep this issue with a truncation argument, in which the extra symmetry of the walk is exploited a second time.

When the measure $\xi$ of Theorem \ref{L^1 cut-off} has support bounded away from zero, the resulting walk converges in $L^2$ and, under the additional  assumption that the support of $\xi$ is bounded away from $\pi$, we are also able to determine the $L^2$ mixing time, and establish a cut-off phenomenon.
\begin{theorem} \label{L^2 cut-off}
Let $\xi$ be a probability measure on $\T_0$ having
support bounded away from 0 and $\pi$, and let $q$
be the point in the support of $\xi$ that is closest to 0. Let
$\sigma(\theta) := \sin\frac{\theta}{2}$ and set $\xi\left(\sigma^2\right) =\int_0^{\pi}
\sigma^2(\theta) \xi(d\theta)$. 
Define for $k \in (0, \infty)$ the threshold
\[
 T_{2,k}(n) := \inf\left\{t \in \zed_{\geq 0}: \left\|\delta_{\Id}\cdot P_\xi^t - \nu\right\|_{L^2} < k\right\}.
\]
We have
\[
 T_{2,k}(n) \sim
\frac{n \log n}{4\sigma^2(q) \wedge 2 \xi(\sigma^2)}.
\]
Furthermore, there exists $0<c_k < \infty$ such that 
\[
 T_{2,k}(n) \leq \max\left( \frac{n \log n}{2 \xi(\sigma^2)}, \frac{n(\log n + 2 \log \log n)}{4 \sigma^2(q)}\right) + c_k n 
\]
With the additional assumption that $\xi$ has positive one-sided lower derivative w.r.t. Lebesgue measure at $q$, we have the lower bound 
\[
 T_{2,k}(n)\geq  \max\left( \frac{n \log n}{2 \xi(\sigma^2)}, \frac{n (\log n- 3 \log \log n)}{4 \sigma^2(q)}\right) - c_k n.
\]
\end{theorem}
 \begin{remark}  When $\sigma^2(q) \leq \frac{1}{2} \xi\left(\sigma^2\right)$ we exhibit a cut-off window of size $O(n\log \log n)$.  It is likely that $O(n)$ is the true size.
\end{remark}

Together,  Theorems  \ref{L^1 cut-off} and \ref{L^2 cut-off} have several surprising features.  For one, the proof of the Theorem \ref{L^2 cut-off} exhibits a competition between the natural representation, which produces the spectral gap, and some moderately sized representations for which the spectrum is smaller, but of higher multiplicity.   We are not familiar with another such naturally occurring walk that has been studied.  Also, for measures $\xi$ such that $\sigma^2(q) < \frac{\xi(\sigma^2)}{2}$ we have a situation in which there is a cut-off phenomenon in both the total variation and $L^2$ norms, the two cut-off points are not the same, and both may be precisely analyzed.  Again, we are not aware of comparable examples.  

We record several further consequences of Theorems \ref{L^1 cut-off} and \ref{L^2 cut-off}.  First, from Theorem \ref{L^1 cut-off} one can also derive near
optimal mixing time results under the Wasserstein distance. Recall that for two
probability measures $\mu,\nu$ defined on a metric space $(\Omega,d)$, the
$L^2$ Wasserstein distance is defined by
\begin{align*}
 W^2(\mu, \nu) = \inf_{(X,Y) \in \cM(\mu,\nu)} \left[\E d(X,Y)^2\right]^{\frac{1}{2}}
\end{align*}
where $\cM(\mu,\nu)$ denotes the set of all couplings of $\mu$ and $\nu$.
 We obtain
\begin{corollary}
Let $\xi$ denote the uniform measure on $\T_0$.  For $t = 2n  (\log n+c)$, the Wasserstein distance of the uniform plane Kac 
walk $\delta_{\Id}\cdot P_\xi^t$ from Haar measure
is bounded by
\begin{align*}
 W^2\left(\delta_{\Id}\cdot P_\xi^t, \nu\right) = o(1), \qquad c \to \infty.
\end{align*}
\end{corollary}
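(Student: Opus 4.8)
The plan is to derive the corollary from the total variation estimate of Theorem~\ref{L^1 cutoff} by means of the elementary inequality
\[
 W^2(\mu,\nu)^2 \;\le\; \operatorname{diam}(\Omega,d)^2\,\|\mu-\nu\|_{\TV},
\]
valid on any metric space of finite diameter: couple $\mu$ and $\nu$ through a maximal coupling $(X,Y)$, so that $X=Y$ off an event of probability $\|\mu-\nu\|_{\TV}$ while $d(X,Y)\le\operatorname{diam}(\Omega)$ always, and take the expectation of $d(X,Y)^2$. On $SO(N)$ with the Hilbert--Schmidt metric (or any standard bi-invariant metric) one has $\operatorname{diam}(SO(N))=O(\sqrt N)$, since $\|U\|_{\mathrm{HS}}^2=N$ for every $U\in SO(N)$. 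So the corollary reduces to the claim that at $t=N(\log N+c)$ one has $\|\delta_{\Id}P_\xi^t-\nu\|_{\TV}=o(1/N)$, with a bound uniform in $N$ and tending to $0$ as $c\to\infty$.

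The next step is to identify the relevant parameters when $\xi$ is uniform. Here $\xi(\sigma^2)=\int_0^{2\pi}\sin^2(\theta/2)\,\frac{d\theta}{2\pi}=\frac12$, so Theorem~\ref{L^1 cutoff} places the cutoff at $N\log N/2$, and the time $t=N(\log N+c)$ corresponds to the parameter value $c'=\log N+2c$ in that statement; since $c'\to\infty$, Theorem~\ref{L^1 cutoff} already gives $\|\delta_{\Id}P_\xi^t-\nu\|_{\TV}=f(\log N+2c)=o(1)$. To upgrade this to $o(1/N)$ one unwinds the $L^2$ form of the upper bound: once $t\gg N$ the density of $\delta_{\Id}P_\xi^t$ lies in $L^2(SO(N))$, and
\[
 \|\delta_{\Id}P_\xi^t-\nu\|_{\TV}^2\;\le\;\tfrac14\,\|\delta_{\Id}P_\xi^t-\nu\|_2^2\;=\;\tfrac14\sum_{\va\neq 0}d_\va^2\,r_\va(\xi)^{2t},\qquad r_\va(\xi)=\int_{\T^1}r_\va(\theta)\,d\xi(\theta).
\]
The character--ratio estimates of Section~\ref{formulae_section}, in the form already used to prove Theorems~\ref{L^1 cutoff} and~\ref{fixed_theta_theorem}, show that this sum is controlled by its leading term, the contribution of the natural representation, for which $d_\va=N$ and $r_\va(\xi)=1-2/N$, all other terms being of strictly smaller order.

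Carrying this out gives $\|\delta_{\Id}P_\xi^t-\nu\|_{\TV}=O\!\left(N(1-2/N)^t\right)=O\!\left(Ne^{-2t/N}\right)$, and substituting $t=N(\log N+c)$ yields $\|\delta_{\Id}P_\xi^t-\nu\|_{\TV}=O(N^{-1}e^{-2c})$, which is $o(1/N)$. Combining this with the coupling inequality,
\[
 W^2(\delta_{\Id}P_\xi^t,\nu)^2\;=\;O(N)\cdot O(N^{-1}e^{-2c})\;=\;O(e^{-2c}),
\]
so that $W^2(\delta_{\Id}P_\xi^t,\nu)=O(e^{-c})\to 0$ as $c\to\infty$, proving the corollary with an explicit exponential rate.

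The only ingredient that is not purely formal is the tail estimate on $\sum_{\va\neq 0}d_\va^2\,r_\va(\xi)^{2t}$ certifying that the natural representation dominates; for $\xi$ uniform this is a special case of the analysis behind Theorem~\ref{L^1 cutoff}, so I do not expect it to be the real obstacle. The point requiring care is the passage from $o(1)$ to $o(1/N)$: one must know that at $t=N(\log N+c)$ the density is genuinely square integrable and that the error concealed in Theorem~\ref{L^1 cutoff} is controlled at scale $o(1/N)$ rather than merely $o(1)$. Both are delivered by the displayed $L^2$ bound; alternatively one can run the truncation argument from the proof of Theorem~\ref{L^1 cutoff} directly, which supplies the same explicit rate without reference to $L^2$ membership, and I would fall back on that if the $L^2$ route became awkward.
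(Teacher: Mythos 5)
Your proposal follows the paper's proof sketch almost exactly: the same coupling/optimal-transport inequality bounding $W^2$ in terms of $\sqrt{\text{diam}^2 \cdot \|\cdot\|_{\TV}}$, the same $\sqrt{N}$ diameter estimate, the same reduction to showing $\|\delta_{\Id}P_\xi^t - \nu\|_{\TV} = o(1/N)$, and the same observation that this is governed by the natural representation's eigenvalue $1-2/N$. Your additional remarks about $L^2$-membership for uniform $\xi$ and the option to fall back on the truncation argument are a sensible elaboration of the point the paper compresses into ``it follows in a straightforward way from our analysis,'' so the two arguments are essentially identical.
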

\begin{proof}[Proof sketch]
The optimal transport inequality \cite{OptimalTransport} gives
\begin{align*}
 W^2(\mu,\nu) \ll \sqrt{d^2 \|\mu- \nu\|_\TV}
\end{align*}
where $d$ is the diameter of the state space.  For our walk on $SO(n)$ the
diameter is of order $\sqrt{n}$. It follows in a straightforward way from our
 analysis that $\left\|\delta_{\Id}\cdot P^{2n(\log n+c)} - \nu\right\|_\TV = o\left(\frac{1}{n}\right)$ as $c \to \infty$, since the
second largest eigenvalue is of size $1 - \frac{1}{n} + O(n^{-2})$.
\end{proof}

\begin{remark}
 Oliviera \cite{Oliv} proved that the Kac walk on $SO(2n+1)$ converges in at most $t =
n^2 \log n$ steps under $L^2$ Wasserstein distance. A direct adaptation of his method would give the same upper bound for the uniform
plane Kac walk defined above. Our result does better than this by an order of $n$.  
\end{remark}

 The argument towards Theorem \ref{L^2 cut-off} also implies a cut-off phenomenon at twice the $L^2$ mixing time in the `$L^\infty$ norm with respect to Haar measure'.  Note that in the continuous state space setting there is some subtlety in working with $L^\infty$ norms.  This is explained in Section \ref{L^infty_section}.
\begin{corollary} \label{L^infty cut-off}
The conclusions of Theorem \ref{L^2 cut-off} are valid with the mixing time doubled and the $L^2$ norm replaced by the $L^\infty$ norm with respect to Haar measure.  
\end{corollary}

The main  new technique in our paper is an integral and corresponding differential method of evaluating the character ratios of the orthogonal group at a rotation, which may be of independent interest. Let $\rho_\va$ be an irreducible representation of $SO(2n+1)$ indexed by dominant weight $\va$, of dimension $d_\va$ and character $\chi_\va$.  Rosenthal derives from the Weyl character formula an expression for the ratios \[ r_\va(\theta) = \frac{\chi_\va(R(\theta))}{d_\va}\] as a trigonometric polynomial,  which he could bound successfully when $\theta = \pi$.  We observe that in fact the character ratio is equal to the sum of the residues of a meromorphic function attached to the representation, which  allows us to use a contour integral and the method of stationary phase to give strong estimates for the character, see Section \ref{background_section} for our differential and integral formulas.  It is a surprising feature of the method that the special case $\theta = \pi$ that was resolved by Rosenthal using combinatorial arguments is most difficult for us to handle using the integral approach.  

 While for this investigation we needed to understand the characters of $SO(2n+1)$ evaluated only at a single rotation, in  Appendix A we prove a corresponding multiple integral formula for the characters of $SO(2n+1)$ evaluated at an arbitrary conjugacy class.  Thus, in principle, the techniques developed here could be used to study a random walk generated by elements of $SO(k) \subset SO(2n+1)$ instead of by rotations in a single plane.  A similar formula for character ratios on the symmetric group appears in  \cite{cycle_walk}.

\subsection*{List of Notations}
We collect together symbols used in the course of our argument.
\begin{itemize}
 \item $N= 2n +1$ is the dimension of the ambient space of the special
   orthogonal group $SO(2n+1)$.
   \item $\T_0 = [0,\pi]$.  
 \item $\theta$ is the angle of rotation defining the fixed-angle
   Rosenthal walk in Theorem~\ref{fixed_theta_theorem}. 
 \item $\xi$ is the probability measure on $\T_0$ defining the random angle Rosenthal walk.
 \item $ P_\theta$ or $P_\xi$ is the corresponding Markov kernel.
  \item $\sigma = \sin\frac{\theta}{2}$. 
  \item For $\xi$-integrable $f: \T_0 \to \bR$, $\xi(f) = \int_{\theta \in \T_0} f(\theta)d\xi(\theta).$
 \item $\va \in \N^n$ is the index of an irreducible representation of $SO(2n+1)$. $\vO$ is the 0 string indicating the trivial representation.
\item $\tilde{\va}$ has $\tilde{a}_j = a_j + j - \frac{1}{2}$ is the Rosenthal index for
irreps. $\tilde{\vO}$ is the shifted 0 string.
\item $\balpha$ with $\alpha_j = \frac{\tilde{a}_j}{n}$ denotes the rescale index component. $\bomega$ with $\omega_j = \frac{j-\frac{1}{2}}{n}$ is the rescaled shifted 0 string.
 \item $\vs \in \N^n$, $s_1 = a_1$, $s_i = a_i - a_{i-1}$ for $i > 1$, is the index of an irrep in shift
notation.
% \item $\omega_j = (j-\frac{1}{2})/n$ is the $\alpha_j$ for $\va = \vO$, i.e., the
%trivial representation.
 \item $d_\va$ is the dimension of the irrep $\rho_\va$.
 \item $r_\va(\theta) = \frac{\Tr \rho_\va(R(1,2;\theta))}{ d_\va}$ is the character
ratio at $\va$.
 \item $g_\va(z) = \theta z - \frac{1}{n}\sum_{j =
1}^n
\log(z^2 + \alpha_j^2)$ is the logarithm of the contour
integrand for $r_\va(\theta)$.
% \item $g_\vO(z)= g_\va(z)$ for $\va = \vO$.
% \item $q_n(\theta) = \sin(\frac{\theta}{2}) \sqrt{\frac{n}{\pi}}$ is the
%factor
%in the contour with subexponential growth in $n$. Together $r_\va(\theta) =
%\oint
%q_n(\theta) e^{n g_\va(z)} dz$.
 \item $\omega \sim \cot\frac{\theta}{2}$  is the saddle point of
 $g_\vO$ on the positive real axis.
 \item We write $\oint_\cC f(z)dz$ for the integral $\frac{1}{2\pi i} \int_{z \in \cC} f(z) dz$ of meromorphic function $f$ about contour $\cC$. For real $\omega$, $\oint_{(\omega)} f(z) dz$ indicates integration on the contour $(\omega-i\infty, \omega + i\infty)$.

%\item $a_n < (\omega \vee 1)n$ is the cut-off level for moderate
%representations 
%(see section~\ref{main shift}).
%\item $C_2 (\log n)^{-\frac{1}{2}}$ is the cut-off of $\beta$ in the contour integral.
%  \item $f_\va(z) = \Re \frac{1}{n} \sum_{j=1}^n \log (\alpha_j^2 - z^2)$ is
%the
%essential part of $g_\va(z)$.
%\item $\Delta_k r_\va= e^{-f_{(a_1,\ldots, a_k + 1, a_{k+1} +1, \ldots, a_n
%+1)}(0)}/ e^{-f_\va(0)}$ is the incremental character ratio obtained
%from $\va$ 
%by
%shifting all subsequent indices from $k$.
%\item $\Delta_k r_\va(j)$ is the contribution of $\alpha_j$ to $\Delta_k
%r_\va$; for
%more variants, see Section~\ref{main shift}.
\end{itemize}

\section{Background on representation theory}\label{background_section}

Our results make use of  harmonic analysis on $SO(2n+1)$; a reference
for the theory of harmonic analysis on compact groups is
\cite{hewitt_ross}, see also \cite{serre}  and \cite{edwards}. In addition, Rosenthal's pioneering paper \cite{Rosenthal} on the special case of the present investigation has an excellent coverage of the needed tools from representation theory. Thus our treatment below is somewhat brief.
 
Probability measure $\mu$ has a Fourier development in terms of the
finite dimensional irreducible representations of $SO(2n+1)$.  These are
indexed by weakly increasing integer `highest weights' 
\[\va =(a_1, ..., a_n), \qquad 0 \leq a_1 \leq a_2 \leq ... \leq a_n.\]
We  also frequently use strongly increasing half-integer weights
(Rosenthal's convention, convenient for the Weyl character formula) 
\[\tilde{\va} = (\ta_1, ..., \ta_n), \qquad \ta_k = a_k + k
- \frac{1}{2},\]
normalized weights,
\[
 \balpha = (\alpha_1, ..., \alpha_n), \qquad \alpha_k = \frac{\tilde{a}_k}{n}
\]
and the shift-indices
\[\vs = (s_1, ..., s_n), \qquad s_1 = a_1, \qquad s_i = a_i - a_{i-1},
\quad 1< i \leq n.\] 

Recall a representation $\rho$ of a group $G$ on a finite dimensional (complex) vector space $V$ is a group homomorphism from $G$ to $Hom(V)$, the group of linear transformations on $V$. $\rho$ is irreducible if its image does not lie in a nontrivial split $Hom(V_1) \oplus Hom(V_2)$ with $V_1 \oplus V_2 = V$. 
Every group $G$ has a trivial representation $\rho_0$ that takes each $g \in G$ to $\id \in Hom(0)$, the identity map on the $0$-dimensional vector space. Since $SO(2n+1)$ is defined as the group of orthogonal transformations on $\R^{2n+1}$ with the standard inner product, we get a representation on $\R^{2n+1}$ for free, called the natural representation.

The representation corresponding to $\va = \vO$  is the trivial
representation while $\va$ with $a_n = 1$, 
$a_k = 0,\; k < n$ indicates the natural (matrix) representation.  We
set $d_\va$ for the dimension of irreducible representation
$\rho_\va$, and let $\chi_\va(g) = \Tr(\rho_\va(g))$ be the
character. 

  We  consider measures $\mu$ that are invariant under conjugation.
  Such measures have a Fourier series in terms of the irreducible
  characters on $SO(2n+1)$:\footnote{The characters of $SO(2n+1)$ are real.
    Note that our normalization for $\hat{\mu}(\chi_\va)$ differs from
    Rosenthal's in the factor of $\frac{1}{d_\va}$.} 
\[ \mu \sim \sum_\va \hat{\mu}(\chi_\va) \chi_\va, \qquad
\hat{\mu}(\chi_\va) = \frac{1}{d_\va} \int_{SO(2n+1)}\chi_\va(g) \mu(d
g).\] 
The Fourier map carries convolution to point-wise multiplication:
\[ \widehat{\mu \ast \mu'}(\chi_\va) = \hat{\mu}(\chi_\va)\cdot
\hat{\mu}'(\chi_\va).\]   
Plancherel's identity takes the form
\[ \left\|\mu - \mu'\right\|_{L^2}^2 = \sum_\va d_\va^2 \left|\hat{\mu}(\chi_\va)
  - \hat{\mu}'(\chi_\va)\right|^2,\] with the interpretation that the
right side is finite if and only if $|\mu-\mu'|$ has an $L^2$ density
with respect to Haar measure.  The Fourier coefficients of Haar
measure are given by $\hat{\nu}(\chi_\vO) = 1$, and
$\hat{\nu}(\chi_\va) = 0$ if $\va \neq \vO$. 

In the special case that $\mu_\theta = \delta_{\Id}\cdot
P_\theta$ is the probability measure
generating the fixed-$\theta$ Rosenthal walk, set
$r_\va(\theta) =
\hat{\mu}_\theta(\chi_\va)=\frac{\chi_\va(R_\theta)}{d_\va}
$ for 
the character ratio at rotation $R_\theta$.  
Rosenthal derives the dimension and character ratio formulae
\begin{align}
 d_\va = &\frac{2^n}{1! 3! \ldots (2n-1)!} \prod_{q=1}^n \ta_q \prod_{1
\le s < r \le n} \left(\ta_r^2 - \ta_s^2\right) \label{dimension formula}\\
 r_\va(\theta) = &\frac{(2n-1)!}{\left(2 \sin \frac{\theta}{2}\right)^{2n-1}} \sum_{j=1}^n
\frac{\sin (\ta_j \theta)}{\ta_j \prod_{r \neq j} \left(\ta_r^2 - \ta_j^2\right) }
\label{character ratio}
\end{align}
from the Weyl character formula. We note that his character ratio formula is exactly the sum of the residues at $\{\pm \tilde{a}_j\}_{j=1}^n$ of the meromorphic function
\[
 f_{\va, \theta}(z) = \frac{(2n-1)!}{\left(2 \sin \frac{\theta}{2}\right)^{2n-1}} \frac{ \sin
(\theta z)}{\prod_{j=1}^n \left(\tilde{a}_j^2 - z^2\right)}.
\]
This fact leads to the following integral formula.
\begin{lemma}\label{contour_integral_lemma} Let $\theta \in \T_0$ and let $\va$ index an irrep of $SO(2n+1)$.
 For any $\alpha > 0$,
\begin{equation}\label{contour_integral_formula}
 r_\va(\theta) =  \frac{(2n-1)!}{\left(2n\sin \frac{
\theta}{2}\right)^{2n-1}}\oint_{\Re(z) = \alpha}
\frac{e^{n\theta z}}{\prod_{j = 1}^n (\alpha_j^2 +z^2)}dz.
\end{equation}
\end{lemma}

\begin{proof}
We may express the sum of the residues of $f_{\va,\theta}$ as
\[
 r_\va(\theta) = \oint_{\cR} f_{\va, \theta}(z) dz
\]
where $\cR$ is any rectangle with corners $\pm B \pm in \alpha$, $B > \ta_n$,
oriented counter-clockwise.  As $B \to \infty$, the integral over the
vertical segments go to 0 leaving two horizontal line integrals at $\Im(z) =
\pm in \alpha$. Exchanging
$z \to -z$, the two integrals are seen to be equal, so we keep twice the top
one.  Now split $\sin(\theta z)$ as $\frac{e^{i\theta z}
- e^{-i\theta z}}{2i}$. The contribution from $e^{i\theta z}$ vanishes by
shifting the contour upward to $i \infty$.  For the final result,
replace $z$ by $\frac{iz}{n}$. 
\end{proof}

We also obtain the following differential formula.

\begin{lemma} \label{algebraic lemma}
 Let $\theta \in \T_0$ and let $\rho_\va$, $\va = (a_1, ..., a_n)$ index a representation of $SO(2n+1)$. 
Set $m = a_n$.  Let $\tb_1> \tb_2
> ... > \tb_m$ complement $\{\ta_1, ..., \ta_n\}$ in $\left\{\frac{1}{2}, \frac{3}{2}, ..., m + n -
\frac{1}{2}\right\}$.  The character ratio at $\va$ is given by
\begin{equation}\label{algebraic_formula} r_\va(\theta) = \frac{(2n-1)!}{(2n +
2m -1)! \left(2 \sin \frac{\theta}{2}\right)^{2n-1}} \left(\prod_{k = 1}^m \left(\tb_k^2 +
\partial_\theta^2\right)\right) \left(2 \sin \frac{\theta}{2}\right)^{2n + 2m
-1}.\end{equation}
\end{lemma}

\begin{proof}
Since we know that $\chi_\vO(\theta) = 1$ and this holds uniformly for all
$\theta$ and all orthogonal groups $SO(2m+1)$, we obtain the integral identity
\[1= \frac{(2m-1)!}{\left(2 \sin \frac{\theta}{2}\right)^{2m-1}} \oint \frac{\sin (z
\theta)}{ \prod_{j = 1}^m \left(\left(j-\frac{1}{2}\right)^2 - z^2\right)} dz,\] valid for $m = 0,
1, 2, ...$, $\theta \in \T_0$. 
 
Now taking any contour that encloses the real axis between $\pm \left(m+n - \frac{1}{2}\right)$
we have
\begin{align*}
 r_\va(\theta) &= \frac{(2n-1)!}{\left(2\sin \frac{\theta}{2}\right)^{2n-1}} \oint
\frac{\sin(z\theta)}{\prod_{j =1}^n (\ta_j^2 - z^2)}dz
\\&= \frac{(2n-1)!}{\left(2\sin \frac{\theta}{2}\right)^{2n-1}} \oint \frac{ \left(
\prod_{j =1}^m \left(\tb_j^2 + \partial_\theta^2\right)\right) \sin(z\theta)}{\prod_{j =
1}^{n+m} \left(\left(j-\frac{1}{2}\right)^2 - z^2\right)} dz.
\end{align*}
Passing the differential operator $\prod_{j =1}^m \left(\tb_j^2 +
\partial_\theta^2\right)$ outside the integral, we obtain the required  
expression.
\end{proof}

We also give the differential formula a combinatorial expression in rising powers of
$\sigma = \sin \frac{\theta}{2}$. 
\begin{lemma}\label{combinatorial_char_ratio_lemma}
 Keep the notations of the previous lemma.  We have
\begin{equation}\label{char_ratio_power_series}
 r_\va(\theta) = \sum_{s = 0}^m \frac{(-4)^s(2n-1)!}{(2(n+s)-1)!} E_s
 \sigma^{2s} 
\end{equation}
where 
\[
 E_s = \sum_{ 1\leq j_1 < j_2 < ... < j_s \leq m} \prod_{i = 1}^s
 \left( \left(m+n+i - j_i - \frac{1}{2}\right)^2-\tilde{b}_{j_i}^2
 \right). 
\]
\end{lemma}
\begin{remark}
 Note that $m + n + \frac{1}{2} - j - \tb_j \geq 1$ and is increasing
 in $j$.  In particular, each term in each $E_s$ is positive. 
\end{remark}

\begin{proof}
 We have
\[ \left(\tb_k^2 + \partial_\theta^2\right) \sigma^{2r-1} =
\left(r-\frac{1}{2}\right)(r-1) \sigma^{2r-3} + \left(\tb_k^2 -
\left(r-\frac{1}{2}\right)^2\right) \sigma^{2r-1}.\]  Iterating this in
(\ref{algebraic_formula}) we obtain 
\begin{align}
 \notag &\prod_{k=1}^m \left(\tilde{b}_k^2 + \partial_\theta^2\right)
 \sigma^{2n+2m-1}\\ = &\sum_{S \subseteq [m]} \frac{(2(n+m)-1)!}{(2(n
   + |S|)-1)!}2^{2|S| - 2m} \sigma^{2(n + |S|) -1} \prod_{j\in S}
 \left[\tilde{b}_j^2 - \left(n+m- m_S(j) -\frac{1}{2}\right)^2\right] 
\end{align}
where
$m_S(j) = \#([j-1] \setminus S)$.  The claim now follows on grouping
terms according to $s = |S|$. 
\end{proof}

As an example of the previous two lemmas, we now calculate
the character ratio of several low-dimensional representations. 
These calculations may be used to prove the lower
bound of Theorem \ref{L^1 cut-off}, although as this proof appears in Rosenthal's work, we do not reproduce it here.

\begin{example}\label{char_ratio_example}
The trivial representation $\rho_\vO$ has dimension 1 and
character ratio 1.  

The lowest dimensional non-trivial irrep is
the natural representation $\rho_{(\vO,1)}$.  Its
dimension is $2n+1$.  Using the previous lemma, we may
easily calculate its character ratio.  We have $m = 1$ and
$\tb_1 = n-\frac{1}{2}$ so that $E_1 = 2n$.  Thus
\begin{equation}\label{natural}
 r_{(\vO,1)}(\theta) = 1-
\frac{4\sigma^2}{2n+1}.
\end{equation}

The tensor product $\rho_{(\vO,1)} \otimes \rho_{(\vO,1)}$
decomposes as a direct sum of the trivial representation,
the adjoint square and the symmetric square:
\[
 \rho_{(\vO,1)}\otimes \rho_{(\vO,1)} = \rho_\vO \oplus
\rho_{(\vO,1,1)} \oplus \rho_{(\vO,2)}.
\]
The adjoint square $\rho_{(\vO,1,1)}$ is a representation of
dimension $n(2n+1)$.  For this representation, $m = 1$ and
$\tb_1 = n-\frac{3}{2}$ so that $E_1 = 4n-2$.  Thus its character
ratio is given by
\begin{equation}\label{adjoint_square}
 r_{(\vO,1,1)}(\theta) = 1 -
\frac{4(2n-1)\sigma^2}{n(2n+1)}.
\end{equation}

The symmetric square $\rho_{(\vO,2)}$ is a representation of
dimension $n(2n+3)$.  For this representation, $m = 2$ and
$\tb_1 = n+ \frac{1}{2}$, $\tb_2 = n-\frac{1}{2}$.  Thus $E_1 = 4n+2$ and
$E_2 = 8n^2 + 12n + 4$. Thus the character ratio is given by
\begin{equation}\label{symmetric_square}
 r_{(\vO,2)}(\theta) = 1- \frac{4\sigma^2}{n} + \frac{16
\sigma^4}{n(2n+3)}.
\end{equation}
\end{example}

\section{Deterministic $\theta$: Proof of Theorem \ref{fixed_theta_theorem}}\label{Sketch}
The
starting point for our random walk analysis is the Upper Bound Lemma as discussed in
\cite{Rosenthal}.
\begin{lemma*}[Upper Bound Lemma]\label{upper_bound_lemma}
 Let $\mu$ be a conjugation invariant probability measure on
 $SO(2n+1)$ and let $\nu$ be Haar measure.  The total variation
 distance between $\mu$ and $\nu$ is bounded by 
\[ \|\mu - \nu\|_\TV \leq \frac{1}{2} \|\mu - \nu\|_{L^2} =
\frac{1}{2}\left(\sum_{\va \neq \vO} d_\va^2
  |\hat{\mu}(\chi_\va)|^2\right)^{\frac{1}{2}}.\] 
In particular, the total variation distance to Haar measure for the
Rosenthal walk of 
fixed angle $\theta$ and step $t$ is bounded by
\begin{equation}
 \left\|\delta_{\Id}\cdot P_\theta^t - \nu\right\|_\TV \leq \frac{1}{2}\left(\sum_{\va \neq
\vO} d_\va^2 r_\va(\theta)^{2t}\right)^{\frac{1}{2}}. \label{character sum}
\end{equation}
\end{lemma*}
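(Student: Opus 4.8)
The plan is to prove the general inequality for an arbitrary conjugation invariant probability measure $\mu$ and then read off the statement for the walk as a special case. If $|\mu - \nu|$ fails to be absolutely continuous with respect to $\nu$, or has a density that is not square integrable, then the right-hand side is $+\infty$ and there is nothing to prove; so assume $h := \tfrac{d(\mu - \nu)}{d\nu}$ lies in $L^2(\nu)$, where $h$ denotes the signed density and $|h| = \tfrac{d|\mu - \nu|}{d\nu}$. By the standard characterization of total variation distance via densities, $\|\mu - \nu\|_{\TV} = \tfrac12 \int_{SO(2n+1)} |h|\, d\nu$. Since $\nu$ is a probability measure, Cauchy--Schwarz against the constant function $1$ gives $\int |h|\, d\nu \le \big( \int h^2\, d\nu \big)^{1/2} = \|\mu - \nu\|_{L^2}$, which yields the first inequality $\|\mu-\nu\|_{\TV}\le\tfrac12\|\mu-\nu\|_{L^2}$.

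For the accompanying identity I would invoke the Plancherel formula recorded above, applied to $\mu - \nu$: one has $\|\mu - \nu\|_{L^2}^2 = \sum_{\va} d_\va^2\, |\hat\mu(\chi_\va) - \hat\nu(\chi_\va)|^2$. The term $\va = \vO$ drops out, because $\chi_\vO \equiv 1$ and $\mu$ is a probability measure, so $\hat\mu(\chi_\vO) = 1 = \hat\nu(\chi_\vO)$; and for $\va \neq \vO$ one has $\hat\nu(\chi_\va) = 0$. Hence $\|\mu - \nu\|_{L^2}^2 = \sum_{\va \neq \vO} d_\va^2\, |\hat\mu(\chi_\va)|^2$, which completes the first display of the lemma.

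To deduce \eqref{character sum}, observe that the one-step law generating $P_\theta$ is the pushforward $\mu_\theta = \int_{SO(2n+1)} \delta_{U R(1,2;\theta) U^{-1}}\, d\nu(U)$, which is conjugation invariant and therefore central; consequently the distribution after $t$ steps is the (unambiguous) $t$-fold convolution $\delta_{\Id} P_\theta^t = \mu_\theta^{\ast t}$. Since the Fourier map turns convolution into pointwise multiplication and $\chi_\va$ is a class function, $\widehat{\mu_\theta^{\ast t}}(\chi_\va) = \hat\mu_\theta(\chi_\va)^t = r_\va(\theta)^t$; substituting this into the general bound gives the claimed estimate.

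There is no serious obstacle: this is the classical Diaconis--Shahshahani upper bound lemma transposed to $SO(N)$, and each step is a one-line consequence of harmonic analysis on compact groups. The only point deserving emphasis is the dichotomy hidden in the inequality: the Plancherel sum $\sum_{\va \neq \vO} d_\va^2 r_\va(\theta)^{2t}$ may diverge — for the mixed walk of Theorem \ref{L^1 cutoff} the density need not even be square integrable — in which case the bound is vacuous. Establishing the convergence of this sum and extracting from it the sharp cutoff is exactly the work carried out in the remainder of the paper, via the contour formula and saddle point analysis for $r_\va(\theta)$.
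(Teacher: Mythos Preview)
Your argument is correct and is precisely the standard Diaconis--Shahshahani proof: Cauchy--Schwarz against the constant function to pass from $L^1$ to $L^2$, followed by Plancherel and the homomorphism property of the Fourier map under convolution. The paper itself does not prove this lemma; it states it as background material, attributing it to Rosenthal \cite{Rosenthal}, so there is no in-paper proof to compare against.
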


Recall that for the fixed $\theta$ walk we set $\sigma =
\sin\frac{\theta}{2}$, and that 
for Theorem \ref{fixed_theta_theorem} we need to prove,
uniformly in $n$, that for $t = \frac{n(\log n + c)}{2\sigma^2}$,
$\left\|\delta_{\Id}\cdot P_\theta^{t} - \nu\right\| \to 0$ as $c \to 
\infty$. 
The key estimate that we prove is the following.
\begin{proposition}\label{character_ratio_bound_proposition}
Let $\theta = \theta(n) \in \left[\frac{\log n}{\sqrt{n}},\pi\right]$ and set, as
usual, $\sigma = \sin \frac{\theta}{2}$. 
For all sufficiently large $n$ there exists a fixed constant $C$ independent
of $n$, $\theta$ such that for each non-trivial representation $\rho_\va$, 
\[  \left|r_\va(\theta)\right|^{\frac{n(\log n + C)}{\sigma^2}}<
d_\va^{-2}.\] 
\end{proposition}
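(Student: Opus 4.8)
In logarithmic form the statement is equivalent to: for every nontrivial $\va$,
\[
-\log|r_\va(\theta)|\;>\;\frac{2\sigma^2\,\log d_\va}{n(\log n+C)},
\]
so I need a lower bound on the rate at which $|r_\va(\theta)|$ drops below $1$ that dominates $\tfrac{2\sigma^2}{n\log n}\log d_\va$. I would split the nontrivial representations according to whether $r_\va(\theta)$ is close to $1$, which is governed by the size of $c_2(\va)\sigma^2/n^2$, where $c_2(\va):=\sum_j\ta_j^2-\sum_j(j-\tfrac12)^2$ is the Casimir eigenvalue on $\rho_\va$.

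\emph{Representations near $1$ ($c_2(\va)\sigma^2=o(n^2)$).} Here the estimate must be sharp. The differential form of the character identity (Lemma~\ref{algebraic lemma}) gives $r_\va(\theta)=1-\tfrac{\sigma^2}{n^2}c_2(\va)\bigl(1+o(1)\bigr)$, hence $-\log|r_\va(\theta)|\ge \tfrac{\sigma^2}{n^2}c_2(\va)\bigl(1+o(1)\bigr)$. Combined with the inequality $2n\log d_\va\le c_2(\va)\bigl(\log n+O(1)\bigr)$ — obtained from the Weyl dimension formula (\ref{dimension formula}) by dividing through by its value at $\vO$ and using $\log(1+x)\le x$, with the natural representation essentially extremal — this yields $\tfrac{n(\log n+C)}{\sigma^2}\bigl(-\log|r_\va|\bigr)\ge \tfrac{c_2(\va)(\log n+C)}{n}(1+o(1))>2\log d_\va$ once $C$ is a large enough absolute constant. (As a cheaper substitute one has, unconditionally, writing $\chi_\va(R_\theta)=\sum_k m_ke^{ik\theta}$ with $m_{-k}=m_k\ge0$, $\sum_km_k=d_\va$, and $m_0,m_1\ge1$ because the zero weight and $\epsilon_1$ lie on the $\epsilon_1$-root string through the highest weight: $1-r_\va(\theta)=\tfrac2{d_\va}\sum_{k\ge1}m_k(1-\cos k\theta)\ge 4\sigma^2/d_\va$ and $1+r_\va(\theta)=\tfrac2{d_\va}\sum_k m_k\cos^2\tfrac{k\theta}2\ge 2/d_\va$, so $-\log|r_\va(\theta)|\ge 2\sigma^2/d_\va$, which already settles every $\va$ with $d_\va\log d_\va\ll n\log n$.)

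\emph{Representations bounded away from $1$.} Here I would use the contour formula of Lemma~\ref{contour_integral_lemma}. Move the contour to $\Re z=\omega_\va$, a positive critical point of $g_\va(z)=\theta z-\tfrac1n\sum_j\log(z^2+\alpha_j^2)$ with $g_\va''(\omega_\va)>0$ (so that $\Re g_\va\le g_\va(\omega_\va)$ on that vertical line — a local fact near $\omega_\va$ by harmonicity, checked to persist globally), and bound the resulting line integral by its Gaussian peak plus a polynomially decaying tail:
\[
|r_\va(\theta)|\;\le\; n^{O(1)}a_n^{O(1)}\;\frac{(2n-1)!}{(2n\sigma)^{2n-1}}\;e^{\,n\,g_\va(\omega_\va)}
\]
(for the rare $\va$ with no positive critical point, which forces $\tfrac1n\sum_j\alpha_j^{-1}<\theta$, i.e. all weights enormous, a cruder contour already makes $|r_\va(\theta)|$ exponentially small with ample rate). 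It then remains to prove the energy--dimension inequality
\[
-\log\frac{(2n-1)!}{(2n\sigma)^{2n-1}}-n\,g_\va(\omega_\va)\;\ge\;c\,\frac{\sigma^2}{n\log n}\,\log d_\va+O\bigl(\log(na_n)\bigr),
\]
i.e. that the integrand already decays like a small power of $d_\va^{-1}$ at the saddle. Using Stirling and the saddle relation $\theta\omega_\va=\tfrac2n\sum_j\tfrac{\omega_\va^2}{\omega_\va^2+\alpha_j^2}$ to reduce the left side, and (\ref{dimension formula}) to write $\log d_\va=\sum_{s<r}\log(\ta_r^2-\ta_s^2)+\sum_q\log\ta_q-\sum_k\log(2k-1)!+n\log2$, this becomes a comparison of two symmetric functions of the weights, which I would establish by tracking the effect on each side of a single unit weight increment — equivalently, comparing $g_\va(\omega_\va)$ with $g_\vO(\omega_\vO)$ and $\tfrac1n\log d_\va$ with $\tfrac1n\log d_\vO=0$ — using convexity of $g_\va$ in the weight vector and the explicit base point $\omega_\vO\sim\cot(\theta/2)$. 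At $\theta=\pi$ the prefactor $(2n\sin\tfrac\theta2)^{1-2n}$ is no longer exponentially small and the saddle estimate degrades, exactly where the introduction flags the method as weakest; for $\theta$ in a fixed neighbourhood of $\pi$ I would instead use Rosenthal's bound from taking absolute values in (\ref{character ratio}), effective precisely when $(2\sin\tfrac\theta2)^{2n-1}=O(1)$, and patch the two regimes together.

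I expect the energy--dimension inequality for the large representations to be the main obstacle: one must extract enough from a contour bound that already carries a polynomial loss to beat $d_\va^2$ for \emph{every} nontrivial $\va$, uniformly in $\theta\in[\tfrac{\log n}{\sqrt n},\pi]$, and in particular make the ``near $1$'' and ``bounded away from $1$'' ranges overlap. Secondary difficulties are the regime $\theta\approx\pi$, where one must fall back on the combinatorial bound, and keeping the cutoff window constant $C$ honest, which forces the use of the delicate differential identity of Lemma~\ref{algebraic lemma} alongside the raw contour estimate.
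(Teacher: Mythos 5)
Your sketch correctly identifies the two complementary mechanisms (differential identity for representations with $r_\va(\theta)$ near $1$, contour/saddle point otherwise, with Rosenthal's combinatorial bound as a patch near $\theta=\pi$), but it leaves the core of the proposition unproved. The quantity you call the \emph{energy--dimension inequality} --- the assertion that the saddle-point value of the integrand already decays like a fixed small power of $d_\va$ --- is precisely the hard content of the proposition, and you explicitly flag it as ``the main obstacle'' without carrying it out. The paper devotes the bulk of Sections 5.2--5.3 to exactly this, via a chain of shift-index increment lemmas (Lemmas 5.6, 5.8, 5.9, 5.11, 5.12) that track how one unit weight increment changes $\Re g_\va$ and $\log d_\va$ in parallel, together with a separate argument for ``controlled growth'' representations (viewing $r_\va$ on $SO(2n+1)$ as a perturbation of a character ratio on a smaller group $SO(2m+1)$) and a crude contour bound for ``giant'' ones. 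So the proposal is a correct road map, not a proof.

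Two structural differences from the paper are worth noting, since they are the parts of your sketch that would need new work. First, you move the contour to the $\va$-specific saddle $\omega_\va$. The paper instead keeps the contour fixed at the trivial representation's saddle $\omega$ (or its analogue $\omega'$ on $SO(2m+1)$), where $r_\vO(\theta)=1$ provides the normalization via \eqref{gaussian_approximation} and \eqref{absolute_integral}; $|r_\va|$ is then bounded through the multiplicative comparison $\Re(g_\va(\omega+it)-g_\vO(\omega+it))$. This avoids having to re-establish, for each $\va$, that $\Re g_\va$ is globally maximized on $\Re z=\omega_\va$ at the saddle, which is what your ``checked to persist globally'' parenthesis would demand and which is not automatic: $\Re g_\va$ has further critical points on the imaginary direction when the $\alpha_j$ cluster near the contour height. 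Second, you split by the Casimir $c_2(\va)\sigma^2/n^2$, while the paper splits by $\sum a_i$ (small) and then by $a_n$ and $a_k$ with $k\approx n(1-\tfrac{1}{2}\cot(\theta/2)^{-1})$ (moderate / controlled growth / giant). The paper's finer trichotomy is what lets it dispose of the huge-weight representations by a cruder argument and isolate the regime where the saddle comparison is tight; your single dichotomy would have to absorb both roles.

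The ``cheaper substitute'' bound $-\log|r_\va|\ge 2\sigma^2/d_\va$ is correct as far as it goes ($m_0\ge1$ holds for every nontrivial integer-weight representation of $SO(2n+1)$ since the integer weight lattice for $B_n$ equals the root lattice), but as you note it only covers $d_\va\log d_\va\ll n\log n$, a vanishing sliver of the representations, and does not help with the main difficulty.
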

We approach this estimate in one of several ways depending upon the size of the representation.  
We say that a representation $\rho_{\va}$ is \emph{small} if $\sum_{i = 1}^n a_i < \frac{n}{\sigma \log n}$.  Those which are not small, but satisfy $a_n \ll \frac{ n}{\sigma}$ are \emph{moderate} and the remaining representations are \emph{large}.  The large representations are further split into ones of \emph{controlled growth} -- those for which there is an index $k = cn$ for which $a_k < Cn$ -- and ones that are \emph{giant}.  For the most sensitive small representations we treat the character ratio using the more exact differential character formula.  For the moderate representations, we estimate the character ratio by treating its integral formula as a perturbation of the integral formula for the trivial character.  For the controlled growth representations we are able to effectively compare the character ratio with another character ratio on a smaller orthogonal group.  For the giant representations, trivial estimates of the integral formula for the character ratio suffice.

We have collected a number of estimates for dimensions of representations in Appendix \ref{dimension_section}, including the proof of the following estimate.
\begin{proposition}\label{sum_of_dimension_bound}
Uniformly in $c \geq 1$ and $n$,
\[ \sum_{\va \neq \vO} d_\va^{\frac{-c}{\log n}} = O\left(e^{-\frac{c}{8}}\right).\]
\end{proposition}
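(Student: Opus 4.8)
The plan is to exploit the explicit product formula \eqref{dimension formula} for $d_\va$ to show that for each irreducible representation, $d_\va$ grows at least geometrically in a suitable ``size'' parameter of $\va$, and then to sum the resulting geometric series. First I would reparametrize: using the shift-indices $\vs = (s_1,\dots,s_n)$, the weight $\va$ ranges over all of $\N^n$, and the half-integer weights satisfy $\ta_j = a_j + j - 1/2 = s_1 + \dots + s_j + j - 1/2 \geq j - 1/2$, with strict growth whenever any $s_i$ is positive. From \eqref{dimension formula}, $d_\va = \frac{2^n}{1!3!\cdots(2n-1)!}\prod_q \ta_q \prod_{s<r}(\ta_r^2 - \ta_s^2)$; since at $\va = \vO$ we have $d_\vO = 1$, it is cleanest to write $d_\va = \prod_q \frac{\ta_q}{\tO_q}\prod_{s<r}\frac{\ta_r^2 - \ta_s^2}{\tO_r^2 - \tO_s^2}$ where $\tO_j = j - 1/2$ are the base values. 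The key elementary estimate I would establish is a lower bound of the form $\log d_\va \geq \kappa \sum_j j\, s_j$ (equivalently $\kappa \sum_j \ta_j$ up to the harmless baseline shift), for some absolute constant $\kappa > 0$; intuitively, increasing $s_j$ by one shifts $\ta_j,\ta_{j+1},\dots,\ta_n$ all up by one, and each such shift multiplies many factors $\ta_r^2 - \ta_s^2$ by a factor bounded below, while the normalizing denominators are fixed.

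Granting such a bound $d_\va \geq e^{\kappa \sum_j j s_j}$ with $\kappa$ absolute, the sum becomes a product over coordinates:
\begin{align*}
 \sum_{\va \neq \vO} d_\va^{-c/\log n} \leq -1 + \prod_{j=1}^n \sum_{s_j = 0}^\infty e^{-\kappa c j s_j/\log n} = -1 + \prod_{j=1}^n \frac{1}{1 - e^{-\kappa c j/\log n}}.
\end{align*}
Taking logarithms, $\log\prod_j (1-e^{-\kappa c j/\log n})^{-1} = \sum_j -\log(1 - e^{-\kappa c j/\log n}) \leq \sum_j \frac{e^{-\kappa c j/\log n}}{1 - e^{-\kappa c/\log n}}$, and for $c \geq 1$ the geometric sum over $j \geq 1$ is $\frac{e^{-\kappa c/\log n}}{(1-e^{-\kappa c/\log n})^2}$. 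One then checks that as $n \to \infty$ with $c \geq 1$ this quantity is $O(e^{-\kappa c/\log n})$; the subtlety is that $\kappa c/\log n$ can be small, so I would split into the regime $c \leq \log n$ (where one Taylor-expands and the bound $\sum_j e^{-\kappa c j/\log n} \asymp \log n/(\kappa c)$ gives a term that needs a more careful second-order argument) and $c \geq \log n$ (where everything is genuinely exponentially small). To land the stated $O(e^{-c/8})$, I would actually want the exponent $\kappa \sum_j j s_j$ to dominate $c/8 \cdot (\text{something} \geq 1)$ term-by-term after the $1/\log n$ scaling is absorbed — so more precisely I expect to prove $\sum_j -\log(1-e^{-\kappa c j/\log n}) \leq e^{-c/8}$ directly by a two-regime estimate, choosing $\kappa$ (hence constants in the dimension bound) large enough.

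The main obstacle is the elementary but fiddly lower bound $\log d_\va \geq \kappa\sum_j \ta_j$ (with the right normalization against $\va = \vO$) that is strong enough — one genuinely linear in the weights, with an absolute constant, valid uniformly in $n$. The product $\prod_{s<r}(\ta_r^2 - \ta_s^2)$ has $\binom{n}{2}$ factors, so one must argue that bumping a single $s_j$ does not merely increase it but increases its \emph{logarithm} by an absolute constant amount, even when $n$ is large and most of the $\ta$'s are already large; the danger is that the multiplicative gain per factor behaves like $1 + O(1/\ta)$ and could in principle sum to something sublinear. The resolution is that when we increase $\ta_r$ by $1$, the factors $\prod_{s<r}(\ta_r^2-\ta_s^2)\prod_{r<t}(\ta_t^2-\ta_r^2)$ telescope favorably — comparing consecutive gaps $\ta_r - \ta_s$, each integer increment of $\ta_r$ changes the product $\prod_{s\ne r}|\ta_r - \ta_s|\cdot\prod_{s\ne r}(\ta_r+\ta_s)$, and the second factor alone contributes a clean multiplicative factor $\geq \prod_{s\ne r}\frac{\ta_r + \ta_s + 1}{\ta_r + \ta_s} $ whose log, summed against the final $\ta_n$ being large, is at worst $O(\log \ta_n)$, but the first factor (differences of a strictly increasing integer-spaced sequence) is where a genuine geometric factor $\geq c_0 > 1$ per unit increment of $\sum_j j s_j$ comes from, using that differences of distinct elements of an arithmetic-like progression grow. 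I would isolate this as a self-contained combinatorial lemma comparing $d_{\va + e_k}$ to $d_\va$ (where $e_k$ increments $s_k$), prove $d_{\va+e_k}/d_\va \geq e^{\kappa}$ for an absolute $\kappa$, iterate, and then feed it into the product-over-coordinates computation above.
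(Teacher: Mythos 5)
There is a genuine gap, and unfortunately it is in the key lemma you plan to prove. The claimed lower bound $\log d_\va \geq \kappa \sum_j j\, s_j$ with an absolute constant $\kappa$ is false already for the natural representation: taking $\vs = \ve_n$ gives $\sum_j j\, s_j = n$ but $d_{(\vO,1)} = 2n+1$, so $\log d_{(\vO,1)} = O(\log n) = o(n)$. If you instead mean a bound linear in $\sum_j a_j$ (which equals $\sum_j (n-j+1)s_j$, not $\sum_j j s_j$), that still fails: for $\va = (0,\dots,0,m)$ with $m \to \infty$ one has $\sum a_j = m$ while $\log d_\va = O(n\log(m+n))$, which is sublinear in $m$, so $\log d_\va / \sum a_j \to 0$. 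The incremental version $d_{\va + e_k}/d_\va \geq e^{\kappa}$ is likewise false: the ratio $d_{(m+1)\ve_n}/d_{m\ve_n} = \frac{m+n+1/2}{m+n-1/2}\prod_{j<n}\frac{(m+n+1/2)^2 - (j-1/2)^2}{(m+n-1/2)^2 - (j-1/2)^2}$ tends to $1$ as $m \to \infty$, since each of the $n$ factors is $1 + O(1/(m+n))$. More generally the "gap" factors $\prod_{s\neq r}|\ta_r - \ta_s|$ you hope will yield a geometric gain are \emph{invariant} under incrementing $s_1$ (which shifts every $\ta_j$ by $1$), so no uniform-in-$n$ multiplicative gain can come from them.

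There is a second, independent problem: even granting a linear bound $\log d_\vs \geq \kappa \sum_j j s_j$, your product-over-coordinates computation does not deliver $O(e^{-c/8})$. Setting $\beta = \kappa c/\log n$, the quantity $\log \prod_{j=1}^n(1-e^{-\beta j})^{-1}$ is of order $\beta^{-1}\log(\beta^{-1})$ when $\beta \to 0$; for $c$ bounded (e.g., $c=1$) this is $\asymp (\log n)(\log\log n)/\kappa$, which \emph{diverges} as $n \to \infty$ — so the upper bound is useless in precisely the regime you flag as "needing a more careful second-order argument"; there is nothing to salvage. The paper avoids this by exploiting how unevenly $\log d_\vs$ grows in different coordinate directions: it splits $\vs = (\vs_1, \vs_2) \in \N^{n-\eta}\oplus\N^\eta$ at $\eta = \lfloor n^{3/4}\rfloor$, uses $d_\vs \geq d^+_{(\vs_1,\vO)} d^-_{(\vO,\vs_2)}$, and then applies Lemma~\ref{dimension_lower_bound} to get $d^+_{(\vs_1,\vO)} \geq e^{|\vs_1|\eta/3}$, which is exponential in $|\vs_1| n^{3/4}$, not merely in $|\vs_1|$. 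This super-linear growth is exactly what is needed to beat the $n^{|\vs_1|}$ entropy once the $1/\log n$ exponent is applied; the $O(e^{-c/8})$ decay then comes entirely from the $\vs_2$-sum, via the partition-counting bound on $\#\{\vs_2 : |\vs_2|_p = q\}$. A single absolute-constant linear bound cannot simultaneously capture the weak growth along $\ve_n$ and the very strong growth along $\ve_1$, and the proposition genuinely needs the latter.
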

\noindent Combining these Propositions we deduce Theorem
\ref{fixed_theta_theorem}. 
\begin{proof}[Deduction of Theorem
\ref{fixed_theta_theorem}]
The lower bound was proven by Rosenthal.  For the upper
bound set $t = \frac{n(\log n +
c)}{2\sigma^2}$.  By the upper bound lemma,
\[
\left\|\delta_{\Id}\cdot P_\theta^{t} - \nu\right\|_\TV^2 \leq
\frac{1}{4}\sum_{\va \neq \vO} d_\va^2
|r_{\va}(\theta)|^{2t},
\]
and substituting the bounds of Propositions
\ref{character_ratio_bound_proposition} and
\ref{sum_of_dimension_bound}, for $c > C+1$ this is bounded by
\[ \frac{1}{4} \sum_{\va \neq \vO} d_\va^{\frac{(C-c)}{\log n}} =
O\left(e^{\frac{-c}{8}}\right),\] as required.
 
\end{proof}

\subsection{Character ratios of small representations}\label{small_section}

We  first
treat the small characters using an extension of our combinatorial formula, Lemma \ref{combinatorial_char_ratio_lemma} above before moving on to the analysis of the integral for larger representations.
The estimates in this section apply to $\theta$ in the full range $\frac{\log n}{\sqrt{n}} < \theta = \theta(n) < \pi$.
\begin{lemma}\label{small_char_ratio_formula}
 We have the exact evaluation
\begin{equation}\label{E_1_evaluation}
  E_1 = \sum_{i = 1}^n a_i(a_i + 2i-1)
 \end{equation}
 and the bounds
 \[
  |E_1| \leq (m + 2n) \sum a_n, \qquad |E_s| \leq s |E_1| \left[(3m +
    2n) \sum a_n \right]^{s-1}. 
 \]
In particular, for $C> 0$ and for $\va$ satisfying $\sum a_i \leq \frac{Cn}{\sigma
  \log n}$ we have 
 \begin{equation}\label{log_formula_small}
  \log r_\va(\theta) = - \frac{E_1 \sigma^2}{n^2}\left(1 + O\left(\frac{C}{\log n}\right)\right).
 \end{equation}
\end{lemma}

\begin{proof}
Since $\{\ta_i\}_{i = 1}^n$ and $\{\tb_i\}_{i=1}^m$ form a partition
of $\left\{\frac{1}{2}, \frac{3}{2}, ..., m+n-\frac{1}{2}\right\}$ we have 
\[
E_1 = \sum_{i = 1}^m \left(\left(m+n + \frac{1}{2}-i\right)^2 - \tb_i^2\right) = \sum_{i=1}^n
\left(\ta_i^2 - \left(i-\frac{1}{2}\right)^2\right). 
\]
Since $\ta_i = a_i + i-\frac{1}{2}$ the evaluation of $E_1$ follows.

The bound for $|E_1|$ is immediate, since $a_i + 2i - 1 < a_n + 2n = m + 2n$.

To bound $|E_s|$, split off the sum over $j_1$ to
obtain 
\begin{align*}
E_s &= \sum_{j_1=1}^m \left(\left(m + n + \frac{1}{2} - j_1\right)^2
  -\tb_{j_1}^2\right)\\ & \qquad \times \sum_{ j_1 < j_2 <
... < j_s \leq m} \prod_{i =
  2}^s \left(\left(m+n+i - j_i -
    \frac{1}{2}\right)^2-\tilde{b}_{j_i}^2 \right). 
\end{align*}
In the inner sum bound
\[
 \tb_{j_i} + m + n + i - j_i - \frac{1}{2} \leq 3m + 2n,
\]
and 
\[
 \frac{m + n + i - j_i - \frac{1}{2} - \tb_{j_i}}{m + n + \frac{1}{2}
   - j_i - \tb_{j_i}} \leq i. 
\]
which follows from $ m + n + \frac{1}{2} -j - \tb_j \geq 1$.

Therefore
\begin{align*}
 E_s &\leq \sum_{j_1 = 1}^m \left(\left(m + n + \frac{1}{2} -j_1\right)^2 -
 \tb_{j_1}^2\right)(3m + 2n)^{s-1} s!   \sum_{j_1 < ... < j_s \leq m}
 \prod_{i=2}^s \left(m + n +\frac{1}{2} - j_i - \tb_{j_i}\right) \\&\leq
 s\left[(3m + 2n) \sum_{j=1}^m \left(m +n + \frac{1}{2} - j -
   \tb_j\right)\right]^{s-1} \cdot E_1. 
\end{align*}
since \[\sum_{j=1}^m \left(m+n + \frac{1}{2}-j-\tilde{b}_j\right) = \sum_{i=1}^n \left(\tilde{a}_i - i +\frac{1}{2}\right) = \sum a_i.\]
This proves the second bound.

To prove the final claim, substitute the bounds above into
(\ref{char_ratio_power_series}) and use \[\frac{\sigma^2}{n^2} (3m +
2n) \sum a_i = O\left(\frac{C}{\log n}\right).\]  
\end{proof}

\begin{proof}[Proof of Proposition
  \ref{character_ratio_bound_proposition} for small representations] 
When $\sum a_i \leq \frac{n}{\sigma \log n}$, the above lemmas reduce
the proof of Proposition \ref{character_ratio_bound_proposition} to
the estimate (now independent of $\theta$) 
\begin{equation}\label{small_character_target}
 \frac{E_1(\va)}{2n}  >  \frac{\log d_\va}{\log n}.
\end{equation}

This is most convenient to verify in the shift notation $\vs
\leftrightarrow \va$,  $a_i = \sum_{j \leq i}s_j$.  Given a
shift
index $\vs$ write $\vs = \sum_{i=1}^n s_i \ve_i$ for its decomposition
in standard basis vectors.  It is immediate from the expression
(\ref{E_1_evaluation}) for $E_1$ that 
\[ E_1(\vs) \geq \sum_{i =1 }^n s_i E_1(\ve_i).\]  Since Lemma
\ref{shift_ratio_bound} guarantees that $\log d_\vs \leq \sum_{i=1}^n
s_i\log d_{\ve_i}$ we have reduced to checking the claim for $\vs$ of
form $\ve_i$. 

Now
\[ 
 E_1(\ve_i) = \sum_{j = i}^n 2j  = n^2 + n +i - i^2. 
\]
Lemma \ref{s_shift_bound}  proves the estimate
\[
  \log d_{ \ve_i} \leq m\left[1 + \log 2 + \log
\frac{n}{m} + \log
\frac{n}{i-\frac{1}{2}}\right] + 2(n-i)[1 + \log 2] + \log \frac{n}{i-\frac{1}{2}}, 
\]
and thus we have reduced to checking 
\[
 \frac{n^2 + n + i - i^2}{2n} \geq \min(i, n-i) \left[ \frac{\log
     \frac{n}{\min(i, n-i)}}{\log n} + \frac{\log \frac{n}{i}}{\log
     n}\right] + O\left(\frac{n-i}{\log n}\right) 
\]
For $i = \vartheta n$ with $0 < \vartheta \leq \frac{1}{2}$, this reduces to
\[
 \left(\frac{1-\vartheta^2}{2}\right) n + \frac{1 + \vartheta}{2} \geq  \frac{2n}{\log n} \vartheta \log \frac{1}{\vartheta} + O\left( \frac{n}{\log n}\right)
\]
which holds for all $n$ sufficiently large.
For $n-i = \vartheta n$ with $0 < \vartheta \leq \frac{1}{2}$, the corresponding statement is
\[
 \vartheta \left(1 - \frac{\vartheta}{2}\right) n + \left(1 - \frac{\vartheta}{2}\right) \geq \vartheta n \frac{\log \frac{1}{\vartheta(1-\vartheta)}}{\log n} + O\left(\frac{\vartheta n}{\log n}\right), 
\]
which, again, holds uniformly in $\vartheta$ for all $n$ sufficiently large.

\end{proof}

\subsection{Insights from the trivial character}\label{trivial_char_section}
To gain a heuristic understanding of our saddle point arguments for the character ratio of medium and large representations we consider initially the case of
the trivial character $\chi_{\vO}$.  Of course, $r_\vO(\theta)
\equiv 1$; we  analyze $r_\va(\theta)$ for other $\va$ by viewing the
associated integral as a perturbation of the integral for $r_\vO(\theta)$.

In the special case $\va = \vO$, set \[\omega_j = \frac{j-\frac{1}{2}}{n}\] for $\alpha_j$.  We
aim to
choose $\alpha=\omega$ in Lemma \ref{contour_integral_lemma}
where $\omega$ is the location of a real saddle point in the integral.  In
this way, the dominant part of the character ratio is given by the
part of the integral very near $z = \omega$.  To this end, introduce
\begin{equation}\label{def_g_0} g_\vO(z) =   \theta z - \frac{1}{n}\sum_{j =
1}^n
\log\left(z^2 + \omega_j^2\right)
\end{equation}
so that (\ref{contour_integral_formula}) becomes\footnote{The notation
$\int_{(\omega)}$ indicates a contour on the line $\Re(z) = \omega$.}
\[r_\vO(\theta) = \frac{(2n-1)!}{\left(2n \sin \frac{\theta}{2}\right)^{2n-1}} \oint_{(\omega)} e^{n g_\vO(z)} dz.\]

A saddle point in the integral occurs for
$\omega$ solving
\begin{equation}\label{saddle_eqn}g_\vO'(\omega) = \theta  - \frac{1}{n}\sum_{j
= 1}^n
\frac{2\omega}{\omega^2 +
\omega_j^2} = 0.\end{equation} The information that we  need
regarding  $g_\vO$ and its first few derivatives near the saddle point
is contained in the following lemma, whose proof we postpone to the
end of this section. 

\begin{lemma} \label{critical point}
 For $\frac{\log n}{\sqrt{n}} < \theta < \pi - \frac{(\log n)^2}{n}$, the saddle point $\omega$ of $g_\vO = g_{\vO,\theta}$ is
given by
\[\omega = \left(1 +
O\left(\log^{-2}n\right)\right)\cot\frac{\theta}{2}.\] In particular,
\[
 1 + \omega^2 = \frac{1 + O\left(\frac{1}{\log n}\right)}{\sigma^2}.
\]
For fixed $\theta
\in (0, \pi)$,  
\[ \omega = \left(1 + O_\theta\left(n^{-1}\right)\right)\cot\frac{\theta}{2}.\]

For $j \geq 2$ we have $g^{(j)}_{\vO, \theta} = g^{(j)}_{\vO}$ is independent of $\theta$.  Uniformly for $\omega' \in \left[\frac{(\log n)^2}{n}, \frac{\sqrt{n}}{\log n}\right]$ we have the asymptotics

\[g_\vO^{(2)}(\omega')= \frac{2}{1 + {\omega'}^2} \left(1 + O\left(n^{-1} \left(1 \wedge
{\omega'}^{-2}\right)\right)\right), \qquad g_\vO^{(3)}(\omega') \sim \frac{2 \omega'}{\left(1 +
  {\omega'}^2\right)^2},\] 
and for real $t$ satisfying $|t| < \omega' \vee
\frac{1}{2}$, the bound
\[  \left|g_\vO^{(4)}({\omega'} + it)\right|  \leq 40 \left(1 \wedge {\omega'}^{-4}\right)  .\]

In particular, for $\omega$ the saddle point and for all $n$ sufficiently large, \[\Re\left(g_\vO(\omega) -
g_\vO\left(\omega + \frac{i \sqrt{1 + \omega^2}}{3}\right)\right) \geq
\frac{1}{40}.\]
Finally, for any $\omega'>0$ and $t \geq 0$,
 \begin{equation} \label{1/n monotone} \Re g_\vO\left(\omega' + i\left(t +\frac{1}{n}\right)\right)<
   \Re g_\vO(\omega' + it). \end{equation} 

\end{lemma}

Assuming this lemma, it is a standard exercise in the saddle point
method to write 
\[ r_\vO(\theta) = \frac{(2n-1)!e^{n g_\vO(\omega)}}{2\pi \left(2 n
\sin \frac{\theta}{2}\right)^{2n-1}}\left\{\int_{|t|< 10^4\sqrt{\frac{1 + \omega^2}{\log n}}} +
\int_{10^4\sqrt{\frac{1 + \omega^2}{\log n}} < |t| < \frac{\sqrt{1+\omega^2}}{3}} + \int_{|t| > \frac{\sqrt{1+\omega^2}}{3}}\right\} e^{n g_\vO(\omega + it) - ng_\vO(\omega)} dt.\] In the first
integral, we Taylor expand the exponent as
\[-ng^{(2)}_\vO(\omega)\frac{t^2}{2} + i ng^{(3)}_\vO(\omega)\frac{t^3}{6}  +
O\left(n |t|^4 \sup_{|s|< 10^4 \sqrt{\frac{1 + \omega^2}{\log n}}} \left|g^{(4)}_\vO(\omega +
is)\right|\right).\]  Thus the first integral is equal to
\[\left(1 + O\left(\frac{1}{n}\right)\right)\sqrt{\frac{2\pi}{n g_\vO^{(2)}(\omega)}}.\] Near the boundary  $t = \pm \frac{1 \vee \omega}{\log n}$,
the integrand is bounded in size by $e^{-n \log^{-2}n (1 + o(1))}$;
using this 
bound and the fact that the integrand decreases in every period of
length $\frac{1}{n}$  (see \eqref{1/n monotone} above), we may easily
bound the second integral.  For the third, use this 
and note that for $|t| > \omega \vee 1$, the integrand decreases by factors of
$e^{-cn}$ when $t$ doubles.  Thus we may express
\begin{equation}\label{gaussian_approximation} 1 = r_\vO(\theta) =
\frac{(2n-1)!}{\left(2 n \sin \frac{\theta}{2}\right)^{2n-1}}
\frac{e^{ng_\vO(\omega)}}{\sqrt{2\pi n g_\vO^{(2)}(\omega)}} \left(1 +
O\left(\frac{1}{n}\right)\right).\end{equation}

Notice that the same reasoning as above allows us to replace the
integrand with its absolute value. 
\begin{lemma}\label{absolute_value_lemma}
We have 
\begin{equation}\label{absolute_integral}1+O\left(\frac{1}{n}\right) = \frac{(2n-1)!}{\left(2 n\sin
\frac{\theta}{2}\right)^{2n-1}} \frac{1}{2\pi}
\int_{-\infty}^\infty \left|e^{ng_\vO(\omega + it)}\right| dt.\end{equation} 
\end{lemma}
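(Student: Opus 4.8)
The plan is to re-run the three-region saddle point estimate that produced \eqref{gaussian_approximation}, observing that passing to absolute values only affects the \emph{imaginary} part of the local Taylor expansion, which was a lower-order contribution to begin with; so the bounds transfer with essentially no new work. Write $I := \frac{1}{2\pi}\int_{-\infty}^\infty |e^{ng_\vO(\omega+it)}|\,dt$ and split the range of $t$ into the three pieces $|t| < \frac{1\vee\omega}{\log n}$, $\frac{1\vee\omega}{\log n} < |t| < 1\vee\omega$, and $|t| > 1\vee\omega$, exactly as before. The key point throughout is that $|e^{ng_\vO(\omega+it)}| = e^{n\Re g_\vO(\omega+it)}$, so every estimate reduces to a statement about $\Re g_\vO(\omega+it)$, and all the estimates invoked in the derivation of \eqref{gaussian_approximation} --- the Gaussian behaviour near $t=0$, the boundary bound near $|t| = \frac{1\vee\omega}{\log n}$, the period-$\frac1n$ monotonicity \eqref{1/n monotone}, and the dyadic $e^{-cn}$ decay for $|t|>1\vee\omega$ --- are already phrased in terms of $\Re g_\vO$.

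For the central region, Taylor-expand $g_\vO(\omega+it)$ about the saddle. Since $\omega>0$ is real and $g_\vO$ is real-analytic on the positive axis, all derivatives $g_\vO^{(k)}(\omega)$ are real; hence $g_\vO^{(2)}(\omega)(it)^2/2 = -g_\vO^{(2)}(\omega)t^2/2$ is real while $g_\vO^{(3)}(\omega)(it)^3/6$ is purely imaginary, so
\[\Re g_\vO(\omega+it) = g_\vO(\omega) - \frac{g_\vO^{(2)}(\omega)}{2}\,t^2 + O\!\left(|t|^4 \sup_{|s| < \frac{1\vee\omega}{\log n}}\bigl|g_\vO^{(4)}(\omega+is)\bigr|\right).\]
By Lemma \ref{critical point}, $g_\vO^{(2)}(\omega)\asymp (1+\omega^2)^{-1}$ and $|g_\vO^{(4)}(\omega+is)| \lesssim 1\wedge\omega^{-4}$, so on the central interval the quartic remainder is $\ll t^2/(1+\omega^2) \asymp g_\vO^{(2)}(\omega)\,t^2$; in particular $\Re g_\vO$ has a genuine nondegenerate Gaussian maximum there. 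Expanding $e^{n[\Re g_\vO(\omega+it) - g_\vO(\omega)]}$ and integrating term by term against $e^{-ng_\vO^{(2)}(\omega)t^2/2}$ (odd powers of $t$ contribute nothing, and each surviving correction carries a factor $O(1/n)$, precisely as the quartic term $\sim n\cdot n^{-2} = n^{-1}$) shows the central contribution is $(1+O(1/n))\,e^{ng_\vO(\omega)}\sqrt{\tfrac{2\pi}{n g_\vO^{(2)}(\omega)}}$ --- the same value as for \eqref{gaussian_approximation}, where the imaginary cubic phase had itself only perturbed the answer at order $1/n$. For the two tail regions, the boundary bound shows $|e^{ng_\vO(\omega+it)}|$ is smaller than the central peak value $e^{ng_\vO(\omega)}$ by a factor $e^{-n\log^{-2}n(1+o(1))}$ near $|t| = \frac{1\vee\omega}{\log n}$, and combining this with the monotonicity \eqref{1/n monotone} controls the middle region, while the dyadic $e^{-cn}$ decay makes the outer region negligible. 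Summing the three pieces gives $I = (1+O(1/n))\,e^{ng_\vO(\omega)}\sqrt{\tfrac{2\pi}{n g_\vO^{(2)}(\omega)}}$; multiplying by $\frac{(2n-1)!}{(2n\sin\theta/2)^{2n-1}}$ and applying \eqref{gaussian_approximation}, which identifies $\frac{(2n-1)!}{(2n\sin\theta/2)^{2n-1}}\cdot\frac{e^{ng_\vO(\omega)}}{\sqrt{2\pi n g_\vO^{(2)}(\omega)}} = 1+O(1/n)$, yields \eqref{absolute_integral}.

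There is no serious obstacle here beyond bookkeeping: the content of the lemma is exactly that deleting the oscillatory phase cannot change the leading saddle point asymptotics, so the argument is if anything a simplification of the one for \eqref{gaussian_approximation}. The one spot that deserves a moment of care is checking that the quartic Taylor remainder does not overwhelm the Gaussian term \emph{uniformly} over the whole admissible range $\theta \in [\frac{\log n}{\sqrt n}, \pi)$ and over both regimes $\omega \lesssim 1$ and $\omega \gtrsim 1$; this is precisely where the scale-uniform bounds $g_\vO^{(2)}(\omega)\asymp(1+\omega^2)^{-1}$ and $|g_\vO^{(4)}(\omega+it)| \lesssim 1\wedge\omega^{-4}$ of Lemma \ref{critical point} are used, and they were tailored for exactly this purpose.
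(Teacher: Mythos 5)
Your proof is correct and follows exactly the approach the paper intends: the paper's own ``proof'' of this lemma is the one-line remark ``the same reasoning as above allows us to replace the integrand with its absolute value,'' which is precisely your observation that passing to $|e^{ng_\vO(\omega+it)}| = e^{n\Re g_\vO(\omega+it)}$ only drops the purely imaginary cubic phase (itself an $O(1/n)$ correction) while every bound used to derive \eqref{gaussian_approximation} was already a bound on $\Re g_\vO$. You have simply spelled out the details the paper elides, including the useful uniformity check that $n|t|^4|g_\vO^{(4)}| = O(1/n)$ on the Gaussian scale $|t|\asymp\sqrt{(1+\omega^2)/n}$ across both regimes $\omega\lesssim1$ and $\omega\gtrsim1$.
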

\noindent In view of this lemma we may bound the  character ratios
$|r_\va(\theta)|$  by bounding the real difference  \[\Re(g_\va(\omega
+ it)-g_\vO(\omega + 
it)).\]

\begin{proof}[Proof of Lemma \ref{critical point}]
Write
\begin{align*}
  g_\vO'(z) &= \theta - \frac{1}{n} \sum_{j=1}^n \left[ \frac{1}{z + i
      \frac{j-\frac{1}{2}}{n}} + \frac{1}{z - i \frac{j-\frac{1}{2}}{n}}\right] = \theta + i \sum_{j = 0}^{2n-1} \frac{1}{-inz -n+ \frac{1}{2}+j}
\end{align*}      
This sum may be expressed in terms of the digamma function $\psi(x) = \frac{\Gamma'}{\Gamma}(x)$.  This function satisfies the following properties:
\begin{enumerate}
 \item For integer $k \geq 1$, \[\psi(x + k) - \psi(x) = \sum_{j=0}^{k-1}\frac{1}{x+k}\]
 \item We have $\psi(x) = \psi(1-x) -\pi \cot\left(\pi x\right)$
 \item $\psi$ has expansion uniformly in angular sectors about 0 that omit the negative real axis
 \[
  \psi(x) = \ln(x) - \frac{1}{2x} - \frac{1}{12x^2} + O\left(x^{-3}\right).
 \]
\end{enumerate}
Using the first two properties, we may write
\begin{align*}      
g_\vO'(z)& = \theta + i \left(\psi\left(n + \frac{1}{2} -inz\right) - \psi\left(-n + \frac{1}{2} -inz\right)\right)
\\ &= \theta + i\left(\psi\left(n +\frac{1}{2} -inz\right) - \psi
  \left(n + \frac{1}{2} +inz\right) + \pi \tan\left(i \pi nz\right).
\right) 
\end{align*} 

Suppose
$z=\omega>0$ is real.  Then \[i \pi \tan \left(i\pi n\omega\right) = -\pi +
O(e^{-\pi n\omega}),\] while $\psi(x) = \log x  -\frac{1}{2x} + O\left(x^{-2}\right)$ gives
\begin{align}\notag& i \left(\psi\left(n +\frac{1}{2} -in\omega\right) - \psi
\left(n +
\frac{1}{2} +in\omega \right)\right) 
\\ \label{first_deriv_formula}& \qquad= 2 \tan^{-1}\left(\frac{\omega}{1 +
\frac{1}{2n}}\right) + \frac{1}{n}\frac{\omega}{1 + \omega^2} + O\left(\frac{1 \wedge
\omega^{-2}}{n^2}\right).\end{align} Thus the claim regarding location of the saddle point follows from
\[\frac{\theta}{2} = \cot^{-1}\left(\frac{\omega}{1 + \frac{1}{2n}}\right) +
O\left(\frac{1 \wedge \omega^{-1}}{n}\right) + O\left(e^{-\pi n
\omega}\right).\]

We may express higher derivatives of $g_0$ in terms of the polygammas
$\psi_k(z) = \frac{d^k}{dz^k}\psi(z)$: 
\begin{align*} g_\vO^{(2)}(z) &= n\left(\psi_1\left(n+\frac{1}{2}
      -inz\right)+ \psi_1\left(n + \frac{1}{2} + inz\right) - \pi^2
    \sec^2\left(i\pi n z\right)\right) 
\\ g_\vO^{(3)}(z) &= -in^2\biggl( \psi_2\left(n + \frac{1}{2}
  -inz\right) - \psi_2\left(n+ \frac{1}{2} + inz\right) - 2\pi^3 \tan
\cdot \sec^2\left( i \pi n z\right)\biggr) 
\\ g_\vO^{(4)}(z) &= -n^3\biggl( \psi_3\left(n + \frac{1}{2}
   -inz\right) + \psi_3\left(n+ \frac{1}{2} + inz\right) \\& \qquad - 2
\pi^4(\sec^4 -2 \tan^2\sec^2)(i\pi nz)\biggr) 
.\end{align*}
On $\Re(z) = \omega$, the terms involving $\sec$ are exponentially
small, so may 
be ignored.  All of the claims follow from 
\[\psi_1(z) = z^{-1} + O\left(|z|^{-2}\right), \qquad \psi_2(z) = -z^{-2} +O\left(|z|^{-3}\right),
\qquad \psi_3(z) = 2z^{-3} + O\left(|z|^{-4}\right);\]
 we  just check the explicit bound for $g^{(4)}_\vO$:
\begin{align*}g^{(4)}_\vO({\omega'} + it) &\sim 2\left[ (1 + t - i {\omega'})^{-3} +
(1 -t + i{\omega'})^{-3}\right]\\&=  4\left[ \frac{1 + 3 (t -i{\omega'})^2}{(1-
(t-i{\omega'})^2)^3}\right] = 4\left[\frac{1 + 3t^2 - 3{\omega'}^2
-6it{\omega'}}{(1 - t^2 + {\omega'}^2 +2it{\omega'})^3}\right].\end{align*}
Now $\left|1 - t^2 + {\omega'}^2 +2it{\omega'}\right|^2 = 1 +
2{\omega'}^2 +{\omega'}^4 +t^4 + 2{\omega'}^2t^2 - 2t^2.$    When ${\omega'}\geq
\frac{1}{2}$ and $t \leq {\omega'}$ we have \[\left|1 - t^2 + {\omega'}^2 +2it{\omega'}\right|^2\geq \left(1 + 
{\omega'}^4\right),\] so that
\[ \left|g^{(4)}_\vO({\omega'} + it)\right| \leq 4 \frac{1 + 9 {\omega'}^2}{(1 +
{\omega'}^4)^{\frac{3}{2}}} \leq 40 \left(1 \wedge {\omega'}^{-4}\right).\]  
When ${\omega'} < \frac{1}{2}$, $t \leq \frac{1}{2}$, and therefore $\left|1 - t^2 + {\omega'}^2
+2it{\omega'}\right|^2 \geq \frac{1}{2}$ while 
$|t - i{\omega'}|^2 \leq \frac{1}{2}$.  Thus in this case 
$g_\vO^{(4)} \leq 4 \times \frac{5}{2} \times 2^{\frac{3}{2}}  < 40$.

We may estimate $\Re\left( g_\vO(\omega) - g_\vO\left(\omega + \frac{i \sqrt{1
+\omega^2}}{3}\right)\right)$ by Taylor expansion about $\omega$.

Finally, to prove the last claim, note that
\[ \Re\left( g_\vO\left({\omega'} + i\left(t + \frac{1}{n}\right)\right) - g_\vO({\omega'} + it) \right)= \log
\left|\frac{{\omega'} + i\left(t - \frac{2n-1}{2n}\right)}{{\omega'} +i\left(t +
\frac{2n+1}{2n}\right)}\right| < 0.\]
\end{proof}

\subsection{Character ratios of moderate representations}
\label{moderate_section}
In this section we extend the proof of   Proposition
\ref{character_ratio_bound_proposition} to representations $\rho_\va$
satisfying 
$a_n \leq \frac{2 \cdot 10^6 n}{\sigma}$, $\sigma =
\sin\frac{\theta}{2}$.
Since we have treated the case $\sum a_j \leq
\frac{n}{\sigma \log n}$ in the section on small
representations, we may assume that this no longer holds.  In this
section we  make use of the assumption that $\theta
> \frac{\log n}{\sqrt{n}}$ and we  further assume
that $\pi - \theta \geq \frac{\log^2
  n}{n}$. The 
case in which $\theta$ is closer to $\pi$ is treated in Section
\ref{close_to_pi}.

In Appendix \ref{dimension_section} we prove the following estimate regarding the dimension of moderate representations $\va$. For all sufficiently large $n$, we have 
\[ \exp\left( \frac{n(5-\log \sigma)}{4\cdot 10^6 \log
n}\right) \leq d_\va \leq
\exp\left(\frac{2 \cdot 10^7 n^2}{\sigma}\right) .\]

In analogy with (\ref{def_g_0}) for the trivial character, introduce
(recall $\alpha_j = \frac{\ta_j}{n}$) 
\begin{equation}\label{def_g_a}
 g_\va(z) =   \theta z - \frac{1}{n}\sum_{j =
1}^n
\log\left(z^2 + \alpha_j^2\right),
\end{equation}
so that, fixing the line of integration at the saddle point $\omega\sim
\cot\frac{\theta}{2}$ for the trivial representation,
\begin{align} \label{contour integral for a}
 r_\va(\theta) = \frac{(2n-1)!}{\left(2n \sin\frac{\theta}{2}\right)^{2n-1}}\oint_{(\omega)} e^{n g_\va(z)} dz.
\end{align}

Heuristically, since
\[
 \frac{(2n-1)!}{\left(2n \sin\frac{\theta}{2}\right)^{2n-1}}\frac{1}{2\pi} \int_{-\infty}^\infty \left|e^{n g_{\vO}(\omega + it)}\right| dt = 1 + O\left(\frac{1}{n}\right)
\]
we may bound \[\left|r_\va(\theta)\right| \leq \sup_t \left| \frac{e^{n
      g_\va(\omega + it)}}{e^{n g_\vO(\omega + it)}}\right|.\]  In
practice, we put in the sup bound only for small $|t|\ll
\frac{1}{\sigma \sqrt{\log n}}$ and rely on the rapid decay of the
integrand in $t$ to take care of the rest of the integral.  

We prove the following two estimates.  In the main part of the integral, we prove
\begin{lemma}\label{small_t_extended}
 Let $\va$ satisfy $a_n \leq \frac{2 \cdot 10^6 n}{\sigma}$ and
let $z = \omega + it$ with
$|t| \leq 10^4 \sqrt{\frac{1 + \omega^2}{\log n}}$. Then for sufficiently large $C>0$ for $n$ sufficiently large,
\begin{align} \label{dominant time} \frac{1}{2} n^2\left(\log n + C+1\right)\left(1 +
  \omega^2\right)\cdot \Re\left(g_{\va}(z) - g_{\vO}(z)\right)
\leq - \log d_{\va}.
\end{align}
\end{lemma}

In the tail of the integral we prove
\begin{lemma}\label{integral_tail}
We have the bound
\[ \int_{|t| > 10^4\sqrt{\frac{1 + \omega^2}{\log n}}} \left|e^{n
g_\va(\omega + it)}\right|
dt \leq \exp\left(n g_\vO(\omega) - \frac{(10^8+o(1))n}{\log
n} \right). \]
\end{lemma}

We give the short deduction of Proposition \ref{character_ratio_bound_proposition} for moderate representations.
\begin{proof}[Proof of Proposition \ref{character_ratio_bound_proposition}.]

Recall that 
(\ref{absolute_integral}) of Section \ref{trivial_char_section} gives
\[1+O\left(\frac{1}{n}\right) = \frac{(2n-1)!}{\left(2n \sin
\frac{\theta}{2}\right)^{2n-1}} \frac{1}{2\pi}
\int_{-\infty}^\infty \left|e^{ng_\vO(\omega + it)}\right| dt.\]
Thus we have the bound
\begin{align}
\label{two_part_bound} \left|r_\va(\theta)\right|\left(1 - O\left(\frac{1}{n}\right)\right) &\leq \sup_{|t| \leq
10^4\sqrt{\frac{1 +
\omega^2}{\log n}}}
\exp\left(n\Re\left(g_\va(\omega + it) - g_\vO(\omega +it)\right)\right) \\& \notag 
+ \sqrt{2\pi g_0^{(2)}(\omega) n} \int_{|t|>10^
4\sqrt{\frac{1 +
\omega^2}{\log n}}} \exp\left(n\Re\left(g_\va(\omega + it) -
g_\vO(\omega)\right)\right) dt.
\end{align}
By Lemmas \ref{small_t_extended} and \ref{integral_tail} the RHS is bounded by
\[
 \leq \exp\left( -\frac{2 \sigma^2 \log
   d_\va}{n (\log n + O(1))}\right) + O\left(\exp\left(\frac{-10^8 n (1 +
o(1))}{\log
       n}\right)\right). 
\]

Since $d_\va \leq \exp\left(\frac{2 \cdot 10^7 n^2}{\sigma}\right)$,
the last expression is, for some $c > 0$,
\[\leq \left(1 + O\left(\exp\left(\frac{-cn}{\log n}\right)\right)\right) \exp\left( -\frac{2 \sigma^2\log d_\va}{n (\log n + O(1))}
  \right) .\] 
We have thus shown that
\[
 \log \left|r_\va(\theta)\right| \leq \frac{-2\sigma^2}{n \log n} \log d_\va \left(1 +
 O\left(\frac{1}{\log n}\right)\right) + O\left(\frac{1}{n}\right). 
\]
Since $d_\va \geq \exp\left(\frac{n(5 - \log
\sigma)}{4\cdot 10^6\log
    n}\right)$ and $\sigma \geq \frac{\log n}{\sqrt{n}}$ it
follows
that in fact 
\[
 \log \left|r_\va(\theta)\right| \leq \frac{-2\sigma^2}{n \log n} \log d_\va \left(1 +
 O\left(\frac{1}{\log n}\right)\right),  
\]
which proves Proposition \ref{character_ratio_bound_proposition} for
moderate representations. 
\end{proof}

For our two integral estimates it is convenient to consider the length function
\begin{equation}
\label{distance_product}\ell(x;t, \omega)^2 = \left|\omega + it -
  ix\right|^2 \left|\omega + it + ix\right|^2 = \left(x^2 +\omega^2 - t^2\right)^2 + 4\omega^2 t^2.
\end{equation}
This function plays a prominent role in the next two sections because we may write 
\[
 \Re\left[g_\va(\omega + it) - g_\vO(\omega + it)\right] = \frac{1}{n}
 \sum_{j =1}^n (\log \ell(\omega_j;t,\omega) - \log \ell(\alpha_j; t,
 \omega)). 
\]
We record several of its simple properties.
\begin{lemma}\label{ell_lemma}
Let $t$ and $\omega >0$ be fixed and consider $\ell$ as a function of
$x$ only.  If $|t| < \omega$ then $\ell$ is minimized at $x = 0$ with
minimum $t^2 + \omega^2$.  If $|t| \geq \omega$ then $\ell$ is
minimized at $|x| = \sqrt{t^2 - \omega^2}$ with minimum $2 |t|\omega$.
In the case $|t| > \omega$, for $0 < \delta < \sqrt{t^2 - \omega^2}$
we have  
\[
\ell\left(\sqrt{t^2 - \omega^2} - \delta\right) < \ell\left(\sqrt{t^2 -\omega^2} + \delta\right).
\]
\end{lemma}

\subsubsection{Estimates in the bulk}

 As in the small representation section, our argument
is based upon making shifts to the index, but whereas in
that section
we shifted the rightmost indices first, here we shift indices from
left to right.  
The main estimate which we prove for the integrand is as follows.
\begin{lemma}\label{increment_upper_bound}
 Let $\vs \in \N^{j}$ and let $(\vs, \vO_{n-j})$ index a representation in shift notation.  Recall that we set $\ve_j$ for the $j$th standard
unit vector. For $t$ satisfying $t \leq 10^4 \sqrt{\frac{1 +
\omega^2}{\log n}}$,
\begin{align}
 \notag & \Re\left[ g_{(\vs,  \vO)+\ve_j}(\omega + it) - g_{(\vs,
\vO)}(\omega + it)\right]\leq
 \\\label{left_shift_lower_bound}&\quad  -\frac{1}{2n} \frac{\left(1 -
\frac{j-1}{n}\right)\left(1 + \frac{j + 2|\vs| }{n}\right)\left( \left(1 + \frac{|\vs| }{n}\right)^2 +
\left(\frac{j +|\vs|  }{n}\right)^2 + 2\omega^2\right)}{\left(\left(1 + \frac{|\vs| }{n}\right)^2 +
\omega^2\right)^2}\\
&\qquad\times\left(1 - \frac{4 t^2}{1 + \omega^2} + O\left(\frac{1}{n}\right)\right). \notag
\end{align}
\end{lemma}

\begin{remark}
 For reference, note that if $j\approx n$ and $|\vs|$ is much smaller
 than $n\left(1 + \omega^2\right)$ then  the RHS of the above bound is roughly
 $-\frac{2 \left(1- \frac{j}{n}\right)}{n\left(1 + \omega^2\right)}$. 
\end{remark}

\begin{proof}
 Let $\va$ correspond to $(\vs,  0)$.  Then $\alpha_j = \frac{j + |\vs|
-\frac{1}{2}}{n}$ and $\alpha_n = 1 + \frac{|\vs|  - \frac{1}{2}}{n}$.  We have
\begin{align*}
 &\Re\left[ g_{(\vs,  \vO)+\ve_j}(\omega + it) - g_{(\vs,  \vO)}(\omega +
it)\right] = \frac{1}{n}\log \frac{\ell(\alpha_j)}{\ell(\alpha_n +
\frac{1}{n})} 
\\ &\qquad= \frac{1}{2n} \log \frac{\left(\alpha_j^2 + \omega^2 - t^2\right)^2 +
  4\omega^2 t^2}{\left(\left(\alpha_n + \frac{1}{n}\right)^2 + \omega^2 - t^2\right)^2 + 4\omega^2
  t^2} 
\\ &\qquad= \frac{1}{2n}\log\left[1 - \frac{\left(\left(\alpha_n +\frac{1}{n}\right)^2 -
    \alpha_j^2\right)\left(\left(\alpha_n + \frac{1}{n}\right)^2 + \alpha_j^2 + 2\omega^2 -
    2t^2\right)}{\left(\left(\alpha_n + \frac{1}{n}\right)^2 + \omega^2 -t^2\right)^2 + 4
    \omega^2t^2}\right] 
\\&\qquad \leq - \frac{1}{2n}\frac{\left(\left(\alpha_n +\frac{1}{n}\right)^2 -
  \alpha_j^2\right)\left(\left(\alpha_n + \frac{1}{n}\right)^2 + \alpha_j^2 + 2\omega^2 -
  2t^2\right)}{\left(\left(\alpha_n + \frac{1}{n}\right)^2 + \omega^2 +t^2\right)^2 } 
\end{align*}
Since $\alpha_n + \frac{1}{n} \geq 1$, the result follows on substituting the
values of $\alpha_j$ and $\alpha_n$ and factoring out expressions
involving $t$ (use $\frac{t^2}{1+\omega^2} = \delta$ and  $\frac{1-2\delta}{(1 + \delta)^2} > 1-4\delta$). 
\end{proof}

The remainder of the proof of Lemma \ref{small_t_extended} is concerned with bounding the growth of the dimension.  This is the most sensitive part of the shifting argument, so we prove an initial estimate first which holds only for $a_n \ll \frac{n}{\log n}$ in full generality, although it covers all moderate representations if $\theta$ is not too close to $\pi$.  For convenience, we quote the dimension bound which we use, proved in Lemma \ref{left_shift_dim_bound} of Appendix \ref{dimension_section}.
\begin{lemma*}
 Let $1 \leq j \leq n$ and  $\vs \in \N^{j}$.  Let $m =
\min(j, n-j+1)$ and let $1 \leq \eta \leq m$ be a parameter. Write
\[|\vs|_{\eta, \loc} = \sum_{j - \eta \leq i \leq j} s_i.\]
 We have the bound
\begin{align*}
 \log \frac{d_{(\vs,\vO)+ \ve_j}}{d_{(\vs, \vO)}}& \leq m \left[ \log \frac{n
+ |\vs|_{\eta,\loc}}{m + |\vs|_{\eta,\loc}}  + \log \frac{n + j}{m+j} +2 \right]\\&
\qquad + \eta  \log (n-j+\eta) + 2(n-j+1) + \log\frac{n}{j}+ O(1) .
\end{align*}

\end{lemma*}
Our initial lemma is as follows.

\begin{lemma}\label{comparison_lemma}
 Let $(\vs,  \vO)$, $\vs \in\N^{j}$, be the shift index of
$\va$,  and let $z =\omega + it$ with
   $|t| 
\leq 10^4 \sqrt{\frac{1 + \omega^2}{\log n}}$.   

\begin{enumerate} 
 \item[(i)] If  $a_n =|\vs|  \leq \frac{n\sqrt{1 +
\omega^2}}{ \log n}$ then for sufficiently large fixed $C$, 
\begin{equation}\label{increment_comparison} \frac{1}{2} n^2(\log n + C)\left(1 +
\omega^2\right)\cdot \Re\left[g_{(\vs,  \vO)+ \ve_j}(z) - g_{(\vs,
\vO)}(z)\right] \leq - \log \frac{d_{(\vs, 
\vO)+\ve_j}}{d_{(\vs,
\vO)}}.\end{equation}
\item[(ii)]  In the range  $\frac{n\sqrt {1 + \omega^2}}{ \log n}
 \leq |\vs|
\leq \frac{2 \cdot 10^6 n}{\sigma} $ there is a fixed
constant $C'$ such that
if $\omega > C'$ then the bound
(\ref{increment_comparison}) continues to hold.
\item[(iii)]   For $\omega < C'$ there
is a third fixed constant $C''>0$ such that
\[\frac{1}{2} n^2(\log n + C)(1 + \omega^2)\cdot \Re\left[g_{(\vs,
\vO)+\ve_j}(z) - g_{(\vs, \vO)}(z)\right] \leq -
C''
(n-j+1 )\log n.\]
\end{enumerate}
\noindent 
Each of $C, C'$ and $C''$ is independent of $n, \theta,$ and $\vs$.  In
particular, 
\begin{enumerate}
\item[(iv)] The bound (\ref{increment_comparison}) holds
unless $n-j+1 \leq n^{\kappa}$
for a fixed universal $\kappa < 1$.
\end{enumerate}
\end{lemma}

\begin{proof}
(i) The restrictions on $|\vs|$ and $t$ allow us to write
(\ref{left_shift_lower_bound}) as 
\[
 \Re\left[ g_{(\vs,  \vO)+\ve_j}(z) - g_{(\vs,
\vO)}(z)\right]\leq -\frac{\left(1 - \frac{j-1}{n}\right)\left(1 +
\frac{j}{n}\right)\left(1 + \frac{j^2}{n^2} + 2\omega^2\right)}{2n\left(1 +
\omega^2\right)^2}\left(1 + O\left(\frac{1}{\log n}\right)\right).
\]
The relative error term may evidently be ignored by choosing
the constant $C$
sufficiently large. 
Set $m = n-j + 1$.  For all $j$  the LHS above is less
than 
\[-\frac{2 }{(1 + \omega^2)n^2 \log n}\cdot m \log n  \cdot \max\left(\left(1 -
O\left(\frac{m}{n}\right)\right),c \right)\] for some fixed $c > 0$.   
Meanwhile,   we have a bound
of $\geq -m\log\left(\frac{n}{m}\right) + O(m)$ for the RHS of
(\ref{increment_comparison}) by taking $\eta = 1$ in Lemma 
\ref{left_shift_dim_bound}. The first claim follows by choosing $C$
sufficiently large to cover the case of small $m$.

(ii)  For $|\vs| \leq \frac{2 \cdot 10^6 n}{\sigma}$ if $\omega$ is larger than a
sufficiently large fixed constant %\footnote{1000000 will
%do.}
then in fact the net effect of the $\frac{|\vs|}{n}$ terms
in the RHS of
(\ref{left_shift_lower_bound}) is negative, so that the above argument goes
through without further restriction on $|\vs|$.  

(iii)  In any case, in the range $|\vs|  \leq \frac{2 \cdot 10^6 n}{\sigma}$, inclusion of 
the factors of $\frac{|\vs| }{n}$ changes the RHS of
(\ref{left_shift_lower_bound}) by at most a constant factor,
which proves the third claim.

(iv) The claim regarding $\kappa$ now follows, since uniformly in
$|\vs| \leq \frac{2 \cdot 10^6 n}{\sigma}$ we have that the LHS of (\ref{increment_comparison}) is
less than $-c' m \log n$, for a fixed $c' > 0$, while the RHS is $\geq -m
\log\left(\frac{n}{m}\right) + O(m)$, with $m = n-j+1$ as before.
\end{proof}

Lemma \ref{comparison_lemma}  allow us to prove Proposition
\ref{character_ratio_bound_proposition} for moderate representations
that satisfy $a_n \ll \frac{n}{\sigma \log n}$.  While the strict
increment inequality (\ref{increment_comparison}) does not necessarily
hold for every increment in the range  $a_n \ll \frac{n}{\sigma}$, we are able to complete the proof of Lemma \ref{small_t_extended} by proving that this estimate holds on average when nearby shifts are moved together.

\begin{proof}[Proof of Lemma \ref{small_t_extended}]
Let $\vs$ correspond to $\va$ and set $|\vs| = a_n = m$.
 Using standard basis vectors we may write (in accordance with
 left-to-right shift) 
\[\vs = \sum_{j = 1}^m \ve_{i_j}, \qquad i_1 \leq i_2 \leq ... \leq i_m;\] set
also $\vs^j = \sum_{k = 1}^j \ve_{i_k}$ with $\vs^0 = \vO$.  Obviously
\begin{align} \notag
 &\frac{1}{2} n^2(\log n + C+1)(1 + \omega^2)\cdot
 \Re\left[g_{\va}(z) 
- g_{\vO}(z)\right] + \log d_{\va}
\\\label{sum_stages} &\quad = \sum_{j = 1}^m \left[ \frac{1}{2} n^2(\log n +
C+1)(1 + \omega^2)\cdot \Re\left[g_{\vs^j}(z)
- g_{\vs^{j-1}}(z)\right] + \log
\frac{d_{\vs^j}}{d_{\vs^{j-1}}}
\right].
\end{align}
If either $m<\frac{n\sqrt{1 + \omega^2}}{ \log n}$ or $\omega > C'$
then we may apply either (i) or (ii) of the previous lemma to conclude
that each 
term in the sum is negative so that we are done.  So we may assume
that $m$ is large 
and that $\omega$ is bounded.  

Call $k = \left\lfloor\frac{n\sqrt{1 +
\omega^2}}{ \log n}\right \rfloor$ and let $m_0 > k$ denote the index
of the first 
positive term in the sum. Note that $i_{m_0} \geq n- n^\kappa$ for some fixed
$\kappa < 1$, by (iv) of Lemma \ref{comparison_lemma}.

We first argue that we may assume that $m - m_0$ is large by noting
that the first $k$ terms in the sum of (\ref{sum_stages}) are
substantially negative. Recall that $\omega$ is assumed to be bounded.
Applying Lemma \ref{dimension_lower_bound} with 
$\eta = n - i_k+1$, we deduce for $n$ sufficiently large that
\[ d_{\vs^k} \geq \exp\left(\frac{k(n-i_k+1)}{3}\right) \geq \exp\left(
\frac{n(n-i_k+1)}{3 \log n}\right).\]  Since the sum up to $k$ in
(\ref{sum_stages}) is negative, even when $C + 1$ is replaced by $C$,  it
follows by comparing with $\frac{\log d_{\vs^k}}{\log n}$ that
\begin{align}
&\frac{1}{2} n^2(\log n + C+1)(1 + \omega^2)\cdot
\Re\left[g_{\vs^k}(z) - g_{\vO}(z)\right]+  \log
d_{\vs^k}\nonumber\\
 &\qquad \leq (1 +
o(1))\frac{-n(n-i_k+1)}{3 (\log n)^2}. \label{C+1 to C} 
\end{align}

  Now for $j > k$, Lemma
\ref{left_shift_dim_bound} gives that
\begin{equation}\label{uniform_upper_bound_dim}\log
\frac{d_{\vs^j}}{d_{\vs^{j-1}}} \leq  (n-i_j + 1)(\log n + O(1))\leq
(n-i_k +
1)(\log n + O(1)),\end{equation}
 and so we immediately obtain that (\ref{sum_stages}) is
negative unless $m - m_0 \geq \frac{n}{4 (\log n)^3}.$

Let $\delta > 0$ be a small, fixed, positive constant and set \[J = \left\lceil
\frac{ \log (n-i_{m_0}+1)}{\log (1 + \delta)}\right \rceil \ll \log n.\]   We
partition $\{i_{m_0}, i_{m_0 + 1}, ..., i_m\}$ into $J$ sets by defining
\[ S_j = \{i_\lambda: i_\lambda \in n- I_j\}, \qquad I_j = \left[\left(1 +
\delta\right)^{j-1},
\left(1 + \delta\right)^j\right), \qquad j=1, 2, ..., J.\]
We perform a trimming on the sets $S_j$.
Let $M = \frac{2n}{3 J (\log n)^4} \gg \frac{n}{(\log n)^5}$. We discard all
 $S_j$ with $|S_j| < M$.  From each remaining set we form $S_j'$
by removing the smallest $\frac{M}{2}$ of the $i_\lambda$ from $S_j$.  Altogether we
have discarded at most $\frac{3}{2} JM \leq \frac{n}{(\log n)^4}$ of the
$i_\lambda$. Now in view of (\ref{uniform_upper_bound_dim}), the total
contribution to (\ref{sum_stages}) of the discarded $i_\lambda$ is bounded by
\[(1 +
o(1))(n-i_k + 1) \frac{n}{(\log n)^3},\] which is negligible; see
\eqref{C+1 to C}. 

We now claim that if $\delta$ was chosen to be appropriately small, then
for each remaining $i_\lambda \in S_j'$,
\[\frac{1}{2} n^2(\log n +
C+1)(1 + \omega^2)\cdot \Re\left[g_{\vs^\lambda}(z)
- g_{\vs^{\lambda-1}}(z)\right] \leq - \log
\frac{d_{\vs^\lambda}}{d_{\vs^{\lambda-1}}} .\]  Setting $\vs = \vs^{\lambda -1}$, $j
= i_\lambda$ and $\eta = 3\delta (n- i_\lambda+1)$ in Lemma
\ref{left_shift_dim_bound} we find that $|\vs|_{\eta, \loc} \geq \frac{M}{2}$, (it includes all
of the deleted points from the set containing $i_\lambda$)
\[\log \frac{d_{\vs^\lambda}}{d_{\vs^{\lambda-1}}} \leq 3\delta (n-i_k + 1) \log
n + O((n-i_k + 1) \log\log n).\] Choosing $3\delta$ sufficiently smaller than $
C''$ from the previous lemma proves the claim and finishes the proof.
\end{proof}

\subsubsection{Integral tail estimate}
Lemma \ref{small_t_extended} completes our supremum bound
for small
$t$, so we now turn to bounding the tail of the integral in
(\ref{contour integral for a}). 
\begin{lemma}\label{all_reps_bound}
Introduce the function
\begin{align}\label{def_m_theta}
  m_\omega(t) &= \max_{\va: a_n \leq \frac{2\cdot 10^6 n}{\sigma}}
 \Re\left(g_\va\left(\omega + it\right)\right).
\end{align}
This satisfies the following properties.
\begin{enumerate}
 \item The maximum in $m_\omega(t)$ is achieved at $\va = \vO$ when $t^2 \leq
\omega^2 + \frac{1}{4}$.
\item The monotonicity property $m_\omega(t) \geq m_\omega\left(t + \frac{1}{n}\right)$
holds for all $t \geq 0$.
\item For $t > 4 \cdot 10^6 \sqrt{1 + \omega^2}$, there is a
$c > 0$ such
that
$m_\omega(2t)
\leq m_\omega(t) - cn.$
\end{enumerate}

\end{lemma}
\begin{proof}
Recall \[\Re\left(g_\va(\omega + it)\right) = \omega \theta - \frac{1}{n}\sum_j
\log (\ell(\alpha_j)).\] 
with
\[
 \ell(x; \omega, t) = \left(x^2 +\omega^2 - t^2\right)^2 + 4\omega^2 t^2.
 \]
 Set $c =  \sqrt{\max(0, t^2 - \omega^2)}$.  Since in  $x > 0$ we have $\ell(x; \omega, t)$ is decreasing in $|x - c|$,  the optimal $\va$ has $\alpha_j$ forming a continuous block about $c$, that is, is of the form $(k)^n$ for some $k$. For (1), by the last claim of Lemma \ref{ell_lemma} for $0 \leq \delta \leq c$, $\ell(c-\delta) < \ell(c + \delta)$, so that the optimal choice is a continuous block including 0.

The second claim holds, since if $(k)^n$, $k \geq 1$
achieves the maximum in  
$m_\omega\left(t + \frac{1}{n}\right)$ then 
\[m_\omega(t) \geq \Re\left(g_{(k-1)^n}(\omega + it)\right) >
\Re\left(g_{(k)^n}\left(\omega + i\left(t + \frac{1}{n}\right)\right)\right) = m_\omega\left(t + \frac{1}{n}\right),\] 
while if $\vO$ achieves the maximum in $m_\omega\left(t + \frac{1}{n}\right)$ then 
\[m_\omega(t) \geq \Re\left(g_{\vO}(\omega + it)\right) > \Re\left(g_{\vO}\left(\omega +
i\left(t + \frac{1}{n}\right)\right)\right) = m_\omega\left(t + \frac{1}{n}\right),\] 
by applying
the last claim of Lemma \ref{critical point}.

Finally, notice that the
restriction on $a_n$ is equivalent to $\alpha_n \leq 1 + 
(2 \cdot 10^6+o(1)) \sqrt{1 + \omega^2}$.  For all $|t| > 4 \cdot
10^6  \sqrt{1 +
\omega^2}$ the maximizing $\va$ is easily seen to be the
block with $a_n$ as large as possible.  The last claim now
follows from Euclidean geometry.  
\end{proof}

We now bound the tail of the
integral for $r_\va(\theta)$.

\begin{proof}[Proof of Lemma \ref{integral_tail}]
Recall that this is the claim
\[ \int_{|t| > 10^4\sqrt{\frac{1 + \omega^2}{\log n}}} \left|e^{n
g_\va(\omega + it)}\right|
dt \leq \exp\left(n g_\vO(\omega) - \frac{(10^8+o(1))n}{\log
n} \right). \]

 In the integral we may evidently replace $g_\va(\omega + it)$ with
$m_\omega(t)$. For $|t| = 10^4\sqrt{\frac{1 + \omega^2}{\log n}} +
O\left(\frac{1}{n}\right)$, for sufficiently large $n$, $m_\omega(t) =
\Re(g_\vO(\omega+
it))$ and Taylor
expansion of
$g_\vO$ around $\omega$ gives, see Lemma \ref{critical point},
\[\Re g_\vO(\omega + it) \leq g_\vO(\omega) - \frac{10^8 +
o(1)}{\log n}.\] 
The
bound now follows easily on applying the monotonicity of $m_\omega$ and rapid
decay for $|t| > 4\cdot 10^6 \sqrt{1 + \omega^2}$.
\end{proof}

\subsection{Large representations}\label{large_section}
Among those large representations, for which $a_n >
\frac{2 \cdot 10^6 n}{\sigma}$,
we distinguish further two kinds.  Let $k = n - \left\lfloor \frac{\sigma n}{2}\right\rfloor$.  If $a_k < \frac{2\cdot 10^6 k }{\sigma}$ we
say that
$\rho_\va$ has `controlled growth.'  Otherwise we say that $\rho_\va$ is
`giant.'

\subsubsection{Controlled growth} In the case that $\rho_\va$ has controlled
growth, let $k< m < n$ be maximal
such that $a_m < \frac{2\cdot 10^6 m}{\sigma}$.  We are
going to view
$r_\va(\theta)$ as a perturbation of the character ratio
$r_{\va(m)}(\theta)$ on 
$SO(2m+1)$, where $\va(m)$ denotes $\va$ truncated at $a_m$.

Let $\omega'$ denote the saddle point for $g_{\vO(m)}$, solving
$
 g'_{\vO(m)}(\omega') = 0
$
for $SO(2m+1)$, and set $\omega^* = \frac{m}{n} \omega'$.

We  choose a 
contour in the integral formula for $r_\va(\theta)$
passing through $\omega^*$ and given by $\cC = \cC_1 \cup \cC_2$ where
\[
 \cC_1 = \left\{z = \omega^* + it: |t|\leq \frac{m}{n} 10^4
\sqrt{\frac{1
+{\omega'}^2}{\log m}}\right\}
\]
and where $\cC_2$ depends upon $\omega^*$,
\begin{align*}
\omega^* > \frac{1}{400}: \qquad\cC_2 &=  \left\{z = \omega^* + it: |t|>  10^4 \frac{m}{n}
\sqrt{\frac{1
+{\omega'}^2}{\log m}}\right\}, \\
\omega^* \leq \frac{1}{400}: \qquad \cC_2 &=
\left\{\omega^* + it:  10^4\frac{m}{n}\sqrt{\frac{1+{\omega'}^2}{\log n}}
\leq
|t|
\leq \frac{m}{n}\frac{\sqrt{1 + {\omega'}^2}}{3}\right\}\\&\qquad \cup
\left\{\alpha \pm
\frac{m}{n}\frac{i\sqrt{1 + {\omega'}^2}}{3}: \alpha \in
\left[\omega^*, \frac{1}{400}\right]\right\} \\&
\qquad \cup
\left\{\frac{1}{400}+ it: |t| > \frac{m}{n}\frac{\sqrt{1 +
{\omega'}^2}}{3} \right\}.
\end{align*}

For the integrand on $\cC_1$ we prove the following estimate.
\begin{lemma}\label{controlled_small_t}
 For $|t| < 10^4 \frac{m}{n}\sqrt{\frac{1 + \omega^2}{\log m}}$ and for
a
sufficiently large
fixed constant $C$,
\begin{align} \label{controlled growth small t}
 \Re \left( ng_\va(\omega^* + it) - mg_{\vO(m)}\left(\omega' + \frac{n}{m} it\right)\right)
-(n-m) \log (\sigma^2)\leq  -\frac{2 \sigma^2
\log d_\va }{ n (\log n+C)}. 
\end{align}
\end{lemma}

For the integral on $\cC_2$ we prove
\begin{lemma}\label{C_integral_bound}
 There is $c > 0$ such that
\[\frac{\sqrt{2 \pi m g^{(2)}_\vO(\omega')}}{\left(\sin \frac{\theta}{2}\right)^{2(n-m)}}\int_{z \in \mathscr{C}_2} \exp\left(n\Re
g_\va(z) - mg_\vO(\omega')\right) d|z|  \leq \exp\left(\frac{-cn}{\log n} -\frac{2 \sigma^2
\log d_\va }{ n (\log n+C)}\right).\]
\end{lemma}

We give the short deduction of Proposition \ref{character_ratio_bound_proposition} for controlled-growth representations.
\begin{proof}[Proof of Proposition
\ref{character_ratio_bound_proposition} ]
Note that, as we are comparing characters on $SO(2n+1)$ and $SO(2m+1)$, the leading factors in the integral representation have ratio
\[
 \frac{(2n-1)!}{(2m-1)!}\frac{\left(2m\sin\frac{\theta}{2}\right)^{2m-1}}{\left(2n\sin \frac{\theta}{2}\right)^{2n-1}} < \frac{1}{\left(\sin \frac{\theta}{2} \right)^{2(n-m)}}.
\]
Arguing as in the proof for  moderate
representations,
\begin{align}\notag
&\left(1 - O\left(\frac{1}{n}\right)\right)\left|r_\va(\theta)\right| \\&\leq \frac{1}{\left(\sin \frac{\theta}{2}\right)^{2(n-m)}} \sup_{|t|\leq 10^4 \frac{m}{n}
\sqrt{\frac{1
+\omega^2}{\log m}}} \exp\left(\Re \left(ng_\va(\omega^* + it) -
mg_\vO\left(\omega' +
\frac{n}{m} it\right)\right)\right) \nonumber\\
& \qquad + \frac{\sqrt{2 \pi m g^{(2)}_\vO(\omega')}}{\left(\sin \frac{\theta}{2}\right)^{2(n-m)}}\int_{z \in \mathscr{C}_2} \exp\left(n\Re
g_\va(z) - mg_\vO(\omega')\right) d|z|  \notag
\end{align}
Putting  in the bounds of Lemmas
\ref{controlled_small_t} and \ref{C_integral_bound}, we obtain
\[
 \log |r_\va(\theta)| \leq O\left(\frac{1}{n}\right) - \frac{2 \sigma^2
\log d_{\va} }{ n (\log n+C)}.
\]
The error term of size $O\left(\frac{1}{n}\right)$ is dealt with as for the moderate representations.
\end{proof}

\subsubsection{Proof of estimates}
Throughout this
section we argue by keeping track of the
incremental change to the integrand
$e^{n g_{\va(m)}(z)}$ and to $d_{\va(m)}$ as we append
successively each
$a_j$, $j > m$.  In Appendix \ref{dimension_section} we write the dimension formula incrementally as
\begin{align*}
 d_\va &= \prod_{k=1}^n d_\va(k), \qquad d_\va(k) = \frac{\ta_k}{k-\frac{1}{2}}
\prod_{1 \leq j < k} \frac{\ta_k^2 - \ta_j^2}{\left(k-\frac{1}{2}\right)^2 - \left(j-\frac{1}{2}\right)^2}\\
d_{\va(m)} &=
\prod_{k=1}^m d_\va(k).
\end{align*}
For those $k > m$, Lemma \ref{incremental_dimension_bound}
gives 
\[ \log d_\va(k) \leq (2k-1) \log
\alpha_k + O(n).\]

\begin{proof}[Proof of Lemma \ref{controlled_small_t}]
We may
write the LHS of (\ref{controlled growth small t}) as (recall $\sigma^2 = \frac{1 + O((\log n)^{-2})}{1+\omega^2}$)
\begin{align*} & m \Re \left( g_{\va(m)}\left(\omega' + i\frac{nt}{m}\right) -
g_{\vO(m)}\left(\omega'+i\frac{nt}{m}\right)\right)
\\&- \sum_{j = m+1}^n \Re \log \frac{\alpha_j^2 + (\omega^* +
it)^2}{1 + \omega^2} + O\left(\frac{n-m}{(\log n)^2}\right).
\end{align*}
Since $\rho_{\va(m)}$ is
either a small or a moderate
representation of $SO(2m+1)$, 
\[ m \Re \left( g_{\va(m)}(\omega' + it) -
g_{\vO(m)}(\omega'+it)\right) \leq  -  \frac{2 \sigma^2
\log d_{\va(m)} }{ m (\log m+C)}, \] and therefore the
proof is completed on observing that, for any fixed $c > 0$, for $n$ sufficiently large we have 
\[\sum_{j = m+1}^n \left[ \frac{2 \sigma^2
 \log d_{\va}(j)}{ n (\log n+C)}  -  \Re \log
\frac{\alpha_j^2 + (\omega' +
it)^2}{1 + \omega^2} \right]  < \frac{-c (n-m)}{(\log n)^2}\] in view of $t = o(1)$, the bound
 for $d_\va(j)$ above, and using $\alpha_j \geq 
10^6 \sqrt{1 
+
\omega^2}$ for all $m < j \leq n$.
\end{proof}

We prove the following estimate before proving Lemma \ref{C_integral_bound}.

\begin{lemma}
 Keep the definitions of $m$ and $\mathscr{C}_2$ from above
and let $m < j
 \leq n$.  We have
\[\inf_{z \in \mathscr{C}_2} \left|z^2 + \alpha_j^2\right| \geq 2 \left(\frac{1}{400}
\vee \omega^* \right)\alpha_j .\]
\end{lemma}
\begin{proof}
Observe that on any
line $\Re(z) = \alpha$, the minimum of $\left|z^2-(i \alpha_j)^2\right|$ is at least $2 \alpha
\alpha_j$.  It only remains to check that for
$\omega^* < \frac{1}{400}$,
the minimum of $\left|(\omega^* + it)^2 - (i\alpha_j)^2\right|$ for $t < \frac{m}{n}\frac{\sqrt{1 +
  {\omega^*}^2}}{3}$ exceeds $ 
\frac{\alpha_j}{200}$, but this is obvious geometrically.  
\end{proof}

We now bound the integral over the contour $\mathscr{C}_2$.

\begin{proof}[Proof of Lemma \ref{C_integral_bound}]
In view of the last lemma, the integral is  bounded by 
\begin{align*}& \exp\left(O\left(\frac{n-m}{(\log n)^2} + \log n\right)\right)\\& \times\prod_{j = m+1}^n
\frac{1 +
\omega^2}{2 \alpha_j \left(\frac{1}{400}\vee \omega^*\right)} 
 \int_{z \in \mathscr{C}_2} \exp\left(\Re
\left[
mg_{\va(m)}\left(\frac{n}{m}z\right) - mg_{\vO(m)}(\omega')\right]\right) d|z|  .\end{align*}

We claim that the latter integral is bounded by (note that the first factor covers the leading error above)
\[
 \exp\left(\frac{-(10^8 + o(1))m}{\log m}\right) \leq \exp\left(\frac{-c n}{\log n} -\frac{2 \sigma^2
\log d_{\va(m)} }{ m (\log m+C)}\right).
\]
If $\omega^*>
\frac{1}{400}$ then this
 follows from Lemma \ref{integral_tail} of the
previous section.  If $\omega^* \leq \frac{1}{400}$ Lemma
\ref{integral_tail} still bounds the vertical part of
the integral that is nearest the real axis, so it
remains to bound the horizontal part, and the vertical
part that extends to $\pm i \infty$.
By Lemma \ref{critical point} 
\[g_{\vO(m)}\left(\omega' + \frac{i\sqrt{1
+ {\omega'}^2}}{3}\right) - g_{\vO(m)}(\omega') \leq \frac{-1}{40}.\] 
Using $\theta
< \pi$ and $n-m < m$ we find that throughout the horizontal part of
$\mathscr{C}_2$, where  $|t|= \frac{\sqrt{1 + \omega^2}}{3}$, the
integrand is
bounded by \[\exp\left(-\left(\frac{1}{40} - \frac{\pi}{200}\right) m + o(m)\right) <
\exp\left(-\frac{m}{200}\right).\] Therefore the
remainder of the integral contributes
$\exp\left(-\frac{m}{200} +o(m)\right)$ by mimicking the proof of Lemma \ref{integral_tail}.

Using $\frac{1}{2(a \max b)} \leq \frac{1}{\sqrt{a^2 + b^2}}$, each term in the product is bounded by
$ \frac{20 \sqrt{1 +
\omega^2}}{\alpha_j} \leq \left(\frac{\sqrt{1 +
\omega^2}}{\alpha_j}\right)^{\frac{1}{10}}.$
Moreover, the bound for $\alpha_j > 1$,
\[
 \log d_\va(j) \leq (2j-1)\log \alpha_j + O(n),
\]
implies that
\[
-\frac{2 \sigma^2
\sum_{j=m+1}^n\log d_{\va}(j) }{ n (\log n+C)}\geq
\frac{1}{10}\sum_{j = m+1}^n \log \left(\frac{ \sqrt{1 +
\omega^2}}{\alpha_j
}\right)
\]
for all $n$ sufficiently large, which suffices to complete the dimension increment.
\end{proof}

\subsubsection{Giant representations}

Once the representation is giant, trivial considerations
suffice to bound the character ratio.  
\begin{proof}[Proof of Proposition \ref{character_ratio_bound_proposition}]
We take the
integral contour on the line $\Re(z) = \omega \vee 1$ so
as to avoid nearby poles of the integrand, and put in a
sup bound for all but the first factor from the product,
reserving the first factor to ensure convergence.  This
yields 
\begin{align*} |r_\va(\theta)| &\leq  O\left(\sqrt{n
g_0^{(2)} (\omega)}\right)
  \int_{\Re(z) = 1 \vee \omega} 
e^{n \Re\left(g_\va(z) - g_\vO(\omega)\right)} d|z|
\\ & \ll \frac{\sqrt{n} e^{\theta (1\vee \omega
-\omega)n}}{\sqrt{1 + \omega^2}}\int
\left|\frac{\left(\frac{1}{n}\right)^2 +
\omega^2}{\alpha_1^2 + ((1
    \vee \omega) 
+ it)^2} \right| dt \sup_{\Re(z) = 1 \vee 
\omega} \prod_{j =2}^n \left| \frac{
\left(\frac{j-\frac{1}{2}}{n}\right)^2 +
\omega^2}{\alpha_j^2 + z^2} \right|  .
\end{align*}
The integral is $O(\omega \wedge \omega^2)$.  
If $\alpha_j < 1 \vee \omega$
then the denominator of the $j$th term in the product is
minimized at $\Re(z) = 0$ with minimum value $\alpha_j^2
+ (1 \vee \omega)^2$.  Thus the $j$th term is bounded by
1. Otherwise, if $\alpha_j \geq 1 \vee
\omega$ the minimum value of the denominator is $2
\alpha_j (1 \vee \omega) > \alpha_j\sqrt{1 + \omega^2}$.
 Therefore, we obtain the bound
\[ 
 \left|r_\va(\theta)\right| \leq O\left(n^{\frac{1}{2}}  e^{((1 \vee \omega) -
\omega)\theta n}\right)\prod_{\substack{j: \alpha_j > 1
\vee \omega  \\ j > 1}} \frac{\sqrt{1 + \omega^2}}{  \alpha_j
}.
\]

Lemma \ref{incremental_dimension_bound} gives the dimension bound
\[
  \log d_\va \leq O(n^2) + 2n \sum_{k: \alpha_k \geq 1}
\log \alpha_k.
\]
We
deduce that for giant representations
\begin{align}\label{giant_comparison}   \log |r_\va(\theta)|
+ \frac{2\sigma^2 \log
d_\va}{n \log n } &\leq - \sum_{\substack{j: \alpha_j >
1 \vee \omega \\ j > 1}} \left(  \left(1 - \frac{2\sigma^2}{\log n} \right)  \left(\log
\alpha_j -\frac{1}{2}\log (1 +\omega^2)\right)\right)\\&\notag \qquad 
+\delta_{\omega <
1}(1-\omega)
\theta n  
+ O\left(\frac{\sigma^2 |\log \sigma| n}{ \log n}\right).
\end{align}

By virtue of being giant, for
\[
 k = n- \left\lfloor \frac{n}{2 \sqrt{1 + \omega^2}}\right\rfloor
\]
we have
\[
 a_k \geq \frac{2 \cdot 10^6 k}{\sqrt{1 + \omega^2}}, \qquad \Rightarrow \qquad \alpha_k \geq \frac{10^6}{\sqrt{1 + \omega^2}}.
\]

First consider $\omega < 1$.  In this case, the sum over $j \geq k$ contributes 
\[
 \lesssim - \frac{6 \ln 10}{2\sqrt{2}} n < - 4.88 n
\]
while the sum over $j < k$ contributes
\[
 \lesssim n \frac{\ln 2}{2} < .35 n.
\]
Since $.35 + \pi < 4.88$, (\ref{giant_comparison}) is asymptotically negative.

For $\omega>1$, the sum over $j \geq k$ contributes
\[
 \lesssim -\frac{ 3\ln 10}{\sqrt{1 +\omega^2}} n < \frac{-6.9 n}{\sqrt{1 + \omega^2}}
\]
while the sum over $j < k$ contributes
\[
 \lesssim n \ln \left(\sqrt{1 + \frac{1}{\omega^2}}\right) \leq  \frac{n}{2 \omega^2} < \frac{n}{\sqrt{1 + \omega^2}}.
\]
Thus, again, (\ref{giant_comparison}) is asymptotically negative.

\end{proof}

\subsection{The case of $\theta$ close to $\pi$}\label{close_to_pi}
Our treatment of moderate and large representations in the last two sections
completes the proof Theorem \ref{fixed_theta_theorem} for $\theta$ varying with
$n$ in the range $\frac{\log n}{\sqrt{n}} \leq \theta \leq \pi -
\frac{(\log
n)^2}{n}$. The case $\theta \approx \pi$ is difficult because the
location of the saddle point is harder to determine. We now
show that a modification of Rosenthal's argument in the 
case $\theta = \pi$ suffices to cover the range $\theta \geq \pi - \frac{(\log
n)^2}{n}$.

For this range of $\theta$, $\sin\frac{\theta}{2} = 1 - O(n^{-2 + \epsilon})$, and so
we seek the estimate
\[ \log |r_\va(\theta)| \leq - \frac{2 \log d_\va}{n(\log n + O(1))}.\]  The
section on small representations gives this estimate already for any $\va$
satisfying $ \sum_j a_j \leq \frac{n}{ \log n}$, so we may
assume that this does not hold. Applying  Lemma \ref{first_dimension_lower_bound} we deduce that 
\[
  \log d_\va \gg \frac{n}{\log n}.                                                                                               
\]

Introduce
\[ \overline{r}_\va(\pi) = \frac{(2n-1)!}{2^{2n-1}}
\sum_{j=1}^n
\left|\frac{\sin (\ta_j \pi)}{\ta_j \prod_{r \neq j} (\ta_r^2 - \ta_s^2)
}\right|.\]
 Note that
\[ r_\va(\theta) \leq \left(\sin \frac{\theta}{2}\right)^{-2n+1} \overline{r}_\va(\pi)
\leq \left(1 + O\left(\frac{(\log n)^2}{n}\right)\right) \overline{r}_\va(\pi).\]  Rosenthal's upper
bound in the case $\theta = \pi$ is proven by showing that 
$\log \overline{r}_\va(\pi) \leq \frac{-2 \log d_\va}{n(\log n + C)}.$
Therefore, bounding the remaining factor in $r_\va(\theta)$,
\[ \log |r_\va(\theta)| \leq O\left(\frac{(\log n)^2}{n}\right) - \frac{2 \log
d_\va}{n(\log n + C)} \leq -\frac{2 \log
d_\va}{n(\log n + C)} \left(1 + O\left(n^{-1 + \epsilon}\right)\right).\]  Thus the error may be
absorbed into the constant.

\section{Mixture of rotations  }\label{mixture_section}
The remainder of the paper concerns random walks wherein at
each step the angle of rotation is chosen from a fixed
distribution. If $\xi$ is this probability distribution on
$\T_0$, the  mixture walk
$P_\xi$ has Fourier
coefficients
\[ \xi(r_\va) = \int_{\T_0} r_\va(\theta) d\xi(\theta).\] 
Generically we expect that the mixing time in total variation is controlled
by the eigenvalue at the lowest dimensional non-trivial representation.  In
this case, this is the natural representation of dimension
$2n+1$, with eigenvalue
\[ \xi(r_{(\vO, 1)}) = \int_{\T_0} \left(1 - \frac{4\left(\sin
\frac{\theta}{2}\right)^2}{2n+1}\right) d\xi(\theta) = 1- \frac{4
\xi\left(\sigma^2\right)}{2n+1},\]
leading to a predicted total variation mixing time of
\begin{equation}\label{L^1 prediction}\frac{\log
d_{(\vO,1)}}{-\log \xi(r_{(\vO,1)})} \sim\frac{n\log
n}{2\xi(\sigma^2)}.\end{equation}

In the case of the mixture walk, the situation is
less clear because the quantity $\frac{\log d_\va}{-\log
|\xi(r_\va)|}$ is not necessarily maximized at the natural
representation.  Heuristically this is suggested by our
Proposition 
\ref{character_ratio_bound_proposition}, which proves the
bound 
\[ |\xi(r_\va)| \leq
\int_{\T_0} d_\va^{\frac{-2 \sigma^2}{n(\log n + C)}} d\xi(\theta).\]
This is only an upper bound, but for large representations $\rho_\va$
this bound 
suggests that $\xi(r_\va)$ is
largely controlled by the part of $\xi$ nearest 0 (the
issue is that the integration is not in the exponent). 

In the proof that follows we confirm the natural
representation prediction (\ref{L^1 prediction}) for the
mixing time in total variation, proving Theorem \ref{L^1 cut-off}, but as the
above discussion suggests, we do not follow the customary
path of bounding the $L^1$ norm with the $L^2$ norm,
instead using a truncation argument to bypass the larger
dimensional representations.  In the following section we
prove a cut-off for the $L^2$ norm at a point which
depends on the smallest point in the support of
measure $\xi$, thus confirming Theorem \ref{L^2 cut-off}.

\subsection{Random $\theta$  in total variation:
Proof of Theorem \ref{L^1
cut-off}}\label{L_1_section}
The lower bound follows from a standard application of
the second moment method (see \cite{DiaconisShahshahani})
applied to the function $\chi_{(\vO, 1)}$.   The necessary
estimates appear in Example \ref{char_ratio_example}:
\begin{align*}
d_{(\vO, 1)} = 2n+1, \qquad & \xi(r_{(\vO,1)}) = 1 - \frac{2
\xi(\sigma^2)}{n} + O\left(n^{-2}\right)
\\ d_{(\vO, 1,1)} = n(2n+1), \qquad & \xi(r_{(\vO, 1,1)}) =
1 - \frac{4 \xi(\sigma^2)}{n} + O\left(n^{-2}\right)\\
d_{(\vO, 2)} = n(2n+3), \qquad
&\xi(r_{(\vO, 2)}) = 1 - \frac{4\xi(\sigma^2)}{n} +
O\left(n^{-2}\right)
\end{align*}
together with the decomposition \[\chi_{(\vO,1)}^2 =
\chi_\vO + \chi_{(\vO, 1,1)} + \chi_{(\vO, 2)}.\]  We refer
the reader to Rosenthal's proof of the lower
bound in the case of deterministic $\theta$,
\cite{Rosenthal} Theorem 2.1, where the details are
 the same.

For the upper bound, recall that we set $\mu_\theta =
\delta_{\Id} \cdot P_\theta$ for the generating measure of
the fixed-$\theta$ walk.  Proposition
\ref{character_ratio_bound_proposition} guarantees
that there exists a $C>0$ such for all $n$ sufficiently
large, and for
all non-trivial representations $\va$, 
\[\log |\hat{\mu}_\theta(\chi_\va)| = \log
|r_\va(\theta)| \leq -
\frac{2\sigma^2(\theta)}{n(\log n + C)}\log d_\va.\]

Let $c > 0$ and let $t= \frac{n (\log n + 2C + 2c)}{2
\xi(\sigma)^2}$.
Conditioning on the choices of $\theta$ at each step of
the walk, and applying the triangle inequality, we have 
\[ \left\|\delta_{\Id}\cdot P_\xi^t   - \nu \right\|_{\TV}  \leq
\iiint\limits_{\vtheta \in
  \T_0^t} \|\mu_{\theta_1} \ast \cdots \ast
\mu_{\theta_t} - \nu\|_{\TV}
d\xi(\theta_1) \cdots d\xi(\theta_t).\] 
Since the total variation distance is bounded by 1, for any
measurable
set $E \subset \T_0^t$ we obtain the bound 
\[ \left\| \delta_{\Id} \cdot P_\xi^t - \nu \right\|_{\TV} \leq \xi^{\otimes t}(E) +
\iiint\limits_{\vtheta \in  E^c}
\left(\frac{1}{4}
  \sum_{\va \neq \vO} d_\va^2 \prod_{j=1}^t
  |\hat{\mu}_{\theta_j}(\chi_\va)|^2
\right)^{\frac{1}{2}}d\xi(\theta_1)\cdots
d\xi(\theta_t),\] by applying the Upper Bound Lemma on
$E^c$.  We now
define  
\[E = \left\{\vtheta \in (\T_0)^t: \sum_{j=1}^t
2\sigma^2(\theta_j)
  \leq n (\log n + C + c )\right\}.\] Standard tail estimates give that there exists a constant $K>0$ such that
\[\xi^{\otimes t}(E) \ll \exp\left(-\frac{(C+c)^2 n}{K\log
    n}\right).\]  Meanwhile, applying Proposition
\ref{sum_of_dimension_bound}, the square of the integrand over $E^c$ is
bounded by 
\begin{align*}&\sup_{\vtheta\in E^c}
  \frac{1}{4}\sum_{\va \neq \vO} \exp\left(2 \log
d_\va\left[1 -
      \frac{1}{n(\log n  + C)} \sum_{j = 1}^t
      2\sigma^2(\theta_j)\right]\right) \ll
\sum_{\va
    \neq \vO} d_\va^{\frac{-c}{\log n}} = O\left(e^{-\frac{c}{8}}\right), 
\end{align*}
completing the proof.

\subsection{Random $\theta$ walk in $L^2$: Theorem
\ref{L^2 cut-off}}\label{L_2_section}
Our proof of Theorem~\ref{L^2 cut-off} exhibits a competition between 
character ratio and dimension growth for small and moderate representations. 
The large representations are inconsequential. We 
first prove the lower bound, which is illustrative. Then we
discuss how to modify the argument from the fixed $\theta$
setting to obtain
the upper bound.

In this section, it is convenient to use as
reference points block representations of the form 
$ \va(s,t) = (\vO_{n-n^t},
(n^s)_{n^t})$, $0 < s, t < 1$, where $n^s,n^t$ are assumed to be integers. We write $\va(s) = \va(s,s)$.  In Lemma \ref{block_dimension} of Appendix \ref{dimension_section} we give the estimate
\begin{equation}\label{block_dimension_bound}
 \log d_{\va(s,t)} = (1 - s\vee t)
  n^{s+t} \left(\log n+O(1)\right).  
\end{equation}

The estimate regarding character ratios which we need for the lower bound in Theorem \ref{L^2 cut-off} is the following one.
\begin{proposition}\label{a_u_ratio_eval}
 Uniformly for $\theta$ in compact subsets of $(0,\pi)$ and for $\frac{1}{2} 
\leq u \leq \frac{3}{4}$ we have
\[  r_{\va(u)}(\theta) = \left(1 + O\left(n^{4u-3}\right)\right) \exp\left( -2 \left(\sin \frac{\theta}{2}\right)^2
n^{2u-1}\right).\]
\end{proposition}

\begin{proof}[Proof of lower bound in Theorem \ref{L^2 cut-off}]
We give only the proof for the sharper lower bound assuming a positive one-sided derivative of $\xi$ at $q$, the lower bound for general $\xi$, which requires only the leading order term, being easier.  For this bound it suffices to prove the
following two results separately:
\begin{enumerate}
 \item For $t = \frac{1}{2 \xi(\sigma^2)} n (\log n -c)$,
\[\left\|\delta_{\Id} \cdot P_\xi^t - \nu \right\|_2 \to
\infty, \qquad n \to \infty,\; c\to \infty\]
 \item For $t = \frac{1}{4 \sigma^2(q)} n ( \log n -3\log\log 
n- c )$, \[\left\|\delta_{\Id} \cdot P_\xi^t - \nu\right\|_2 \to \infty,
\qquad n \to \infty,\; c \to \infty.\]
\end{enumerate}
We prove both estimates by dropping all but one term
in the sum
\[
 \left\|\delta_{\Id} \cdot P_\xi^t - \nu\right\|_2^2 = \sum_{\va \neq \vO}
d_\va^2 \left|\xi(r_\va)\right|^{2t}.
\]

In the first case,  take the natural representation $\va
= (\vO,1)$, with dimension $d_{(\vO,1)} = 2n+1$
and character ratio $\xi(r_{(\vO,1)}) = 1 -\frac{2
\xi\left(\sigma^2\right)}{n} +O\left(\frac{1}{n^2}\right)$, which plainly suffices.  

Notice that the second case is contained in the first
unless $\sigma^2(q) < \frac{\xi\left(\sigma^2\right)}{2} \leq \frac{1}{2}$,
so we assume $\sigma^2(q) < \frac{1}{2}$ from now on.  In
this case we consider a representation of the form
$\va(u)$, where $u$ is slightly larger than $\frac{1}{2}$.
Introduce 
parameters $\Delta$ and 
$\beta$ characterized by
\[e^\beta = \frac{1}{2} \Delta \log \Delta = \frac{\log n}{2 \left(\sin
  \frac{q}{2}\right)^2}\] and 
set $u = \frac{1}{2} + \frac{\beta}{2 \log n}$.  
Note that our dimension evaluation in (\ref{block_dimension_bound}) gives
\[ 
 \left(1 + O\left(\frac{1}{\log n}\right)\right)\log d_{\va(u)} =\left(\frac{1}{2} 
- \frac{\log \log n}{2 \log n}\right) e^\beta n \log
n.
\]

Since $\xi$ has positive one-sided derivative at $q$, we have
\[\xi\left(\left[q,
q+ \Delta^{-1}\right]\right) \geq \frac{\delta}{\Delta}\] for some
$\delta > 0$.  Proposition \ref{a_u_ratio_eval} gives an
asymptotic evaluation of $r_{\va(u)}(\theta)$ for $\theta
\in [q, \pi - \epsilon]$, so in evaluating
the contribution of $\theta \in [\pi -
\epsilon, \pi]$ to $\xi(r_{\va(u)})$ we simply put in
the bound of Proposition
\ref{character_ratio_bound_proposition} for the
fixed $\theta$ walk, \[\log |r_{\va(u)}(\theta)| \lesssim
-\frac{\left(2 - \frac{\epsilon^2}{2}\right)\log
d_{\va(u)}}{ n(\log n +O(1))}.\]  Putting in the asymptotic
of Proposition \ref{a_u_ratio_eval} for $r_{\va(u)}(\theta)$
for $\theta$ on the small interval $\left[q, q+ \Delta^{-1}\right]$ and
using only that the character ratio is non-negative on the
remaining bulk, $\theta\in \left[q + \Delta^{-1}, \pi -
\epsilon\right]$, we deduce the lower bound
\begin{align*}\xi(r_{\va(u)}) &\geq \left(1 + O\left(n^{3u-2}\right)\right)
\int_{\theta
    \in \left[q, q 
+ \Delta^{-1}\right]} \exp\left(-2 \left(\sin \frac{\theta}{2}\right)^2 e^\beta\right)d\xi(\theta)\\ & 
\qquad\qquad  -
 \int_{|\theta - \pi| < \epsilon} \exp\left(-\left(1 +
O\left(\epsilon^2\right)\right)
e^\beta\right)d \xi(\theta)\\
& \geq \delta \Delta^{-1} \exp\left(-2 \left(\sin \frac{q}{2}\right)^2
e^\beta \right) \exp\left(-2\Delta^{-1} e^\beta
\right)-\exp\left(-\left(1 +
O\left(\epsilon^2\right)\right)e^\beta \right).
\end{align*}
Since $\sigma^2(q) < \frac{1}{2}$ this furnishes an
asymptotic of shape 
\[\xi(r_{\va(u)}) \gtrsim
\frac{\delta}{\Delta^2}\exp\left(-2 \left(\sin \frac{q}{2}\right)^2
e^\beta \right) \]
if $\epsilon$ is chosen sufficiently
small [use $\exp(2 \Delta^{-1} e^\beta) = \Delta \asymp
\frac{\log n}{\log \log n}$ while $e^\beta \asymp \log n$].
Therefore
\[ \log \xi(r_{\va(u)}) \geq -2 \left(\sin \frac{q}{2}\right)^2
e^\beta  - 2 \log\log n  -
O(1) = -\log n -2 \log \log n + O(1).\]
Recall that
\[
 \log d_{\va(u)} = \frac{1}{2 \left(\sin \frac{q}{2} \right)^2}\left(\frac{1}{2} - \frac{\log \log n}{\log n} + O\left(\frac{1}{\log n} \right) \right)n (\log n)^2
\]

We deduce that
\[ \frac{n(\log n - 3\log\log n)}{4 \left(\sin \frac{q}{2}\right)^2}
\log \xi(r_{\va(u)}) + \log
d_{\va(u)} \geq O\left(\log d_{\va(u)} (\log n)^{-1}\right). \]  The result follows since
the error can be absorbed in the constant $c$.
\end{proof}

The proof of Proposition \ref{a_u_ratio_eval} is by a direct saddle point evaluation.  We first prove a preliminary lemma.
\begin{lemma}
 Let $D$ be a bounded rectangle contained in the right half plane $\{z \in
\C: \Re(z) >0\}$.  Let $\frac{1}{2} \leq u < 1-\delta$ for some
$\delta > 0$,
and assume $n^u$ is an integer.  Uniformly for $z \in D$ we have for
all $m \geq 
1$ \[g_{\va(u)}^{(m)}(z) = g_\vO^{(m)}(z) - n^{2(u-1)} g_\vO^{(m+2)}(z) + O_{m,
D}\left(n^{4(u-1)}\right).\]  In particular, suppose $\theta \in (\epsilon, \pi -
\epsilon)$ for some $\epsilon > 0$.  Let $\omega' > 0$ solve the saddle point
equation $g_{\va(u)}'(\omega') = 0$ and let $\omega$ be the usual saddle point
$g_\vO'(\omega) = 0$.  We have
\[\omega' = \omega + n^{2(u-1)}
\frac{g_\vO^{(3)}(\omega)}{g_\vO^{(2)}(\omega)} + O_\epsilon\left(n^{4(u-1)}\right).\] 
Also, 
\[ g_{\va(u)}(\omega') = g_\vO(\omega) - n^{2(u-1)}g_\vO^{(2)}(\omega) +
O_\epsilon\left(n^{4(u-1)}\right), \qquad g_{\va(u)}^{(2)}(\omega') =
g_\vO^{(2)}(\omega) + 
O_\epsilon\left(n^{2(u-1)}\right).\]  For all $t > 0$ and all $n$ sufficiently
large we have \[\Re g_{\va(u)}\left(\omega' + it + \frac{i}{n}\right) \leq \Re
g_{\va(u)}(\omega' + 
it).\] 
Finally, there are constants $c_1, c_2 > 0$ such that if $t > c_1$ then 
\[
 \Re\left( g_{\va(u)}(\omega + 2it) - g_{\va(u)}(\omega + it)\right) < -c_2.
\]

\end{lemma}
\begin{proof}
Set $\phi(z) = i\left[ \psi(n
+ \frac{1}{2} -inz) - \psi(-n +\frac{1}{2} -inz)\right]$, and recall that 
$g_{\vO}'(z) = \theta + \phi(z)$.  We have 
\[g_{\va(u)}'(z) = g_{\vO}'(z) + \left(\phi\left(z + in^{u-1}\right) - 2 \phi(z) +
\phi\left(z-in^{u-1}\right)\right).
\]
 The term in parentheses is $n^{2(u-1)} g_{\vO}^{(3)}(z) + O\left(n^{4(u-1)}\right)$ by Taylor expansion.  The claims
for the other derivatives follow similarly.

The facts regarding $\omega'$ and $g_{\va(u)}^{(m)}(\omega')$ may be deduced by
standard calculus.  The monotonicity property for $g_{\va(u)}$ on $\Re(z)
= \omega'$ is proven in the same way as the related claim for $g_\vO$, in Lemma
\ref{critical point},
that is,
\begin{align*} &\Re\left( g_{\va(u)}\left(\omega + i\left(t + \frac{1}{n}\right)\right) - g_{\va(u)}(\omega + it) \right)\\&= \log
\left|\frac{\omega + i\left(t - 1 + \frac{1}{2(n-n^u)}\right)}{\omega +i\left(t +
1+\frac{1}{2(n-n^u)}\right)}\right|\left|\frac{\omega + i\left(t + 1 +\frac{1}{2n}\right)}{\omega +i\left(t -1+
\frac{1}{2n}\right)}\right|\left|\frac{\omega + i\left(t - 1+\frac{1}{2(n+n^u)}\right)}{\omega +i\left(t +1+
\frac{1}{2(n+n^u)}\right)}\right|\\
&= O\left(n^{-3+2u}\right) + \log\left|\frac{\omega + i\left(t - 1+\frac{1}{2n}\right)}{\omega +i\left(t +
1+\frac{1}{2n}\right)}\right|<0.\end{align*}

The last statement may be checked geometrically.
\end{proof}

Using the last lemma, we can now evaluate $\log \left|r_{\va(u)}(\theta)\right|$.

\begin{proof}[Proof of Proposition \ref{a_u_ratio_eval}]
The assumption on $\theta$ ensures that $g_{\vO}^{(2)}(\omega) = O(1)$. 
Standard application of the saddle point method gives 
\[r_{\va(u)}(\theta) =\left(1 +O\left(\frac{1}{n}\right)\right) \frac{(2n-1)!}{\left(2n \sin \frac{\theta}{2}\right)^{2n-1}}
\frac{e^{n
g_{\va(u)}(\omega')}}{\sqrt{2\pi n g_{\va(u)}^{(2)}(\omega')}}.\]  Comparing
this to the integral for the trivial
representation at its saddle point
$\omega$ yields
\begin{align*} r_{\va(u)}(\theta) &= \exp\left(n\left(g_{\va(u)}(\omega') -
g_{\vO}(\omega)\right)\right) \left(1  + O\left(n^{2(u-1)}\right)\right) \\&=
\exp\left(-n^{2u-1}g_{\vO}^{(2)}(\omega) + O\left(n^{4u-3}\right)\right) \\&= \exp\left(-2
\left(\sin \frac{\theta}{2}\right)^2 n^{2u-1}+O\left(n^{4u-3}\right)\right).
\end{align*}
Here we use $g_{\vO}^{(2)}(\omega) = 2 \left(\sin\frac{\theta}{2}\right)^2\left(1 + O\left(\frac{1}{n}\right)\right)$,  with an error absorbed in the $O\left(n^{4u-3}\right)$ term. 
\end{proof}

\subsubsection{Upper bound for $L^2$ mixture walk}
We now turn to the upper bound in Theorem \ref{L^2 cut-off}.
In this section we prove the following variant of the
character ratio bound in
Proposition~\ref{character_ratio_bound_proposition} for
mixtures of character ratios.
\begin{proposition}\label{mixture_char_ratio_bound} Let
$\xi$ be a probability measure supported in  $(0, \pi)$ and
let $q> 0$ and $q' < \pi$ be the smallest and largest points
in its support. As usual,
let $\sigma(\theta) = \sin\frac{\theta}{2}$.  Let $\rho_\va$ be
a non-trivial irreducible representation.  There exists a
constant $C > 0$ for which the following bound holds.  
\[
 \log \xi(|r_\va|) \leq -\min\left( \frac{2
\xi(\sigma^2)}{n(\log n + C)} , \frac{4
\sigma^2(q)}{n(\log n + 2 \log\log  n + C)}\right) \log d_\va.
\] 
\end{proposition}
\noindent From this Proposition, the deduction of the upper
bound in Theorem~\ref{L^2 cut-off} is the same as for
Theorem~\ref{fixed_theta_theorem}.

As in the proof of
Proposition~\ref{character_ratio_bound_proposition} for
fixed angle, the proof of Proposition
\ref{mixture_char_ratio_bound} splits according as the
representation is small, moderate or large.  When the
representation is small, in the range $\sum a_j \leq
\frac{n}{\sigma(q) \log n}$, one may apply Lemma
\ref{small_char_ratio_formula} directly to deduce that, uniformly in $\theta$,
\[
 r_\va(\theta) = 1 - \frac{E_1 \sigma^2(\theta)}{n^2}\left(1 +
O\left(\frac{1}{\log n}\right)\right),
\]
and that, in this range, $E_1 = O\left(\frac{n^2}{\log^2 n}\right)$.  Thus
for small representations,
\[ \xi(|r_\va|) = \xi(r_\va) = 1 -
\frac{E_1 \xi(\sigma^2)}{n^2}\left(1 + O\left(\frac{1}{\log n}\right)\right)\] and
\[
 \log \xi(|r_\va|) = -\frac{E_1 \xi(\sigma^2)}{n^2}\left(1 +
O\left(\frac{1}{\log n}\right)\right).
\]
The proof that
\[
 \log \xi(|r_\va|) \leq - \frac{2
\xi(\sigma^2)\log d_\va}{n(\log n + C)}
\]
now goes through as before.

Similarly, when the representation is large, in the regime
where $a_n \geq \frac{2\cdot 10^6}{\sigma(q)} n$,
arguments similar to our previous ones
reduce the problem to the case of small and moderate
dimensions. Recall that previously when the representation
is large, we either build the character ratio out of a ratio
on a smaller group by appending weights, or bound the
integral trivially.  We leave it to the reader to check that
in our bounds from Section \ref{large_section} on large
representations, the incremental contributions to the log of
the dimension are dominated by the contributions to the
character ratio by a factor of at least $\log n$, so that
large representations may be ignored.

We now turn to the main case of moderate representations,
where  there are some new ideas. One idea
is that the greatest
trade-off between character ratio and dimension growth
occurs for $a_n$ of size about $n^{\frac{1}{2}}$.  For
smaller $a_n$, the character ratio is sufficiently
near 1 that there is no loss in integrating it
directly as opposed to its logarithm.  For larger
$a_n$ the character ratio beats the dimension by
a larger amount.  Another idea, already illustrated in our
proof of the $L^2$ lower bound, is that the dimension is
most difficult to control when the indices $a_j$ are clumped. We handle this case in Lemma
\ref{dimension_clump_bound} below.

Before
starting out we make several simplifying reductions. 
As before, we bound the character ratio $r_\va(\theta)$ for
$\theta \in [q, q']$
by bounding the associated integral,\footnote{Throughout
this section we  work with integrals
truncated at $|t|\ll_\xi \frac{1}{\sqrt{\log n}}$,
thus neglecting the tail. The necessary argument to
control the error from the tail is the same as in Proof of
Proposition \ref{character_ratio_bound_proposition}.}
\[ 
 |r_\va(\theta)| \leq \frac{(2n-1)!}{\left(2n \sin
\frac{\theta}{2}\right)^{2n-1}} \frac{1}{2\pi}\int_{|t| \ll_\xi
\frac{1}{\sqrt{\log n}}} e^{n \Re(g_\va(\omega + it))}dt.
\]
This we write as
\begin{equation}\label{L_integral}
 \frac{(2n-1)!}{\left(2n \sin
\frac{\theta}{2}\right)^{2n-1}} \frac{1}{2\pi}\int_{|t| \ll_\xi
\frac{1}{\sqrt{\log n}}} e^{L_\va(\theta, t)} e^{n \Re(
g_\vO(\omega + it))}dt
\end{equation}
where we introduce
\[
 L_\va(\theta,t) = n \Re\left(g_\va\left(\omega(\theta) + it\right) -
g_\vO\left(\omega(\theta)
+ it\right)\right) = \log\left|\prod_{j = 1}^n \frac{(\omega(\theta) +
it)^2 +
\omega_j^2}{(\omega(\theta) + it)^2 + \alpha_j^2}\right|
\]  
and also $L_\va(\theta) = L_\va(\theta, 0)$.

We now record several lemmas regarding $L_\va(\theta, t)$.

\begin{lemma} \label{individual perturbation}
 In the range $\omega = \Theta(1)$, $\alpha_j = O(1)$
and $|t| = o(1)$,
\begin{align*}
 \left|\frac{\omega_j^2 + ( \omega + it)^2}{\alpha_j^2 + (
\omega + it)^2}\right| =
1- \frac{\alpha_j^2 - \omega_j^2}{\alpha_j^2 +
\omega^2}\left[1 +
O\left(t^2\right)\right].
\end{align*}
In particular, if $\rho_\va$ is a moderate representation
and $\theta \in [q, q']$, $|t| \ll \frac{1}{\log n}$ then
\[
 L_\va(\theta, t) = L_\va(\theta)\left(1 +
O\left(t^2\right)\right).
\]

\end{lemma}

\begin{proof}
Write
\[ 
 \frac{\omega_j^2 + ( \omega + it)^2}{\alpha_j^2 + (
\omega + it)^2} =  1- \frac{\alpha_j^2 -
\omega_j^2}{\alpha_j^2 + \omega^2} \left[1 +
\left(\frac{\alpha_j^2 + \omega^2}{\alpha_j^2 + (\omega +
it)^2} - 1\right)\right].
\]
Since $1- \frac{\alpha_j^2 - \omega_j^2}{\alpha_j^2 +
\omega^2} = \Omega(1)$, the proof of the first claim is
completed by noting
that $\left(\frac{\alpha_j^2 + \omega^2}{\alpha_j^2 +
(\omega +
it)^2} - 1\right)$ has imaginary part that is $O(|t|)$ and
real part that is $O(t^2)$.

For the second statement, note that $\rho_\va$ moderate
implies $\alpha_j = O(1)$, while $\theta \in [q,q']$
implies $\omega = \Theta(1)$.  The first statement then
applies, and shows that
\[
 \log \left|\frac{\omega_j^2 + ( \omega + it)^2}{\alpha_j^2
+ (
\omega + it)^2}\right| = \left(1 + O\left(t^2\right)\right)\log 
\frac{\omega^2 + \omega_j^2}{\omega^2 +
\alpha_j^2 }. 
\]
The second claim follows on summing over $j$.
\end{proof}

\begin{lemma}\label{L_a_proportional_lemma}
 Let $\rho_\va$ be a moderate representation and let
$\theta \in[q,q']$.  Then 
\begin{enumerate}
\item $L_\va(\theta) \asymp
L_\va(q)$;  
\item $\frac{1}{\log n} \ll -L_\va(q)$ for all
moderate $\va$.
\end{enumerate}
\end{lemma}

\begin{proof}
 Since $\omega = \Theta(1)$ and $\alpha_j = O(1)$ for
all $j$ the first statement follows immediately from the
definition,
\[
 L_\va(\theta) = \sum \log \left(1- \frac{\alpha_j^2
- \omega_j^2}{\omega^2 + \alpha_j^2}\right).
\]

For the second statement, recall that $\va$ moderate
guarantees $\sum_j a_j \gg \frac{n}{\log n}$ and $a_n \ll
n$. We bound
\[
 -L_\va(q) = -\sum_{j} \log \left(1 -
\frac{\alpha_j^2 - \omega_j^2}{\omega^2 +
\alpha_j^2}\right)\gg \sum_{j} (\alpha_j - \omega_j)
\gg \frac{1}{\log n}
\]
since $1 \ll \omega_j \leq \alpha_j \ll 1.$
\end{proof}

\begin{lemma}\label{ratio_at_saddle_lemma}
 There is a constant $c > 0$ such that for all moderate
representations $\rho_\va$ we have \[\log \xi(|r_\va|) \leq
c L_\va(q).\]  Furthermore, there is a $c'(\xi) > 0$ such
that if $-L_\va(q) < c' n$ then 
\[
 \log \xi(|r_\va|) \leq \left(1 + O\left(\frac{\log n}{n}\right)\right)\log
\xi\left(e^{L_\va(\theta)}\right). 
\]
\end{lemma}

\begin{proof}
 By modeling the proof of Proposition
\ref{character_ratio_bound_proposition} for moderate
representations (see e.g. (\ref{two_part_bound})) we have
\[
 \xi(|r_\va|) \leq \left(1 + O\left(\frac{1}{n}\right)\right) \sup_{\theta \in [q,q']}
\sup_{|t| \ll_\xi \frac{1}{\sqrt{\log n}}}
e^{-L_\va(\theta, t)}.
\]
The first statement follows
since $L_\va(\theta, t) \sim L_\va(\theta) \asymp L_\va(q)$
[the error term of size $O(\frac{1}{n})$ is negligible].

To prove the second statement, use (\ref{L_integral}) to
write
\begin{align*}
 &|r_\va(\theta)| \leq e^{L_\va(\theta)} \frac{(2n-1)! e^{n
g_\vO(\omega)}}{\left(2n \sin\frac{\theta}{2}\right)^{2n-1}}
\\& \qquad \qquad \times\frac{1}{2\pi}\int_{|t| \ll_\xi
\frac{1}{\sqrt{\log n}}} \exp\left(-\frac{n
g_\vO^{(2)}(\omega) t^2}{2} + O\left(t^2 L_\va(\theta)\right) + O\left(n
t^4\right)\right) dt.
\end{align*}
For $L_\va(\theta) < c'' n$ for a sufficiently small
constant $c''$, the entire expression evaluates to (see
(\ref{absolute_integral}))
\[ e^{L_\va(\theta)} \left(1 + O\left(\frac{1}{n}\right) + O\left(\frac{L_\va(\theta)}{n}\right)\right).\]
Thus \[\xi(|r_\va|) \leq \xi\left(e^{L_\va}\right) \left(1 + O\left(\frac{1}{n}\right) +
O\left(\frac{L_\va(q)}{n}\right)\right).\]  The claim follows since
$\log \xi\left(e^{L_\va}\right) \asymp L_\va(q) $ and $-L_\va(q) \gg
\frac{1}{\log n}$.
\end{proof}

The above lemmas effectively reduce our problem to that of
comparing $L_\va(\theta)$ to the dimension $ \log d_\va$. 
Recall that earlier we introduced the dimension increment
\[
 d_\va(k) = \frac{\ta_k}{2k-1} \prod_{j < k} \frac{\ta_k^2
- \ta_j^2}{(k-\frac{1}{2})^2 - (j-\frac{1}{2})^2}
\]
and that in Lemma \ref{incremental_dimension_bound} we
proved the bound
\begin{equation}\label{dimension_increment_bound}
 \log d_\va(k)  \leq a_k \log \left(1 +
\frac{2k-1}{a_k}\right) + (2k-1) \log \left(1 +
\frac{a_k}{2k-1}\right) + O(1).
\end{equation}
Similarly, now we introduce the character ratio increment
(recall $\alpha_j = \frac{a_j + j-\frac{1}{2}}{n}$)
\[
 L_\va(k, \theta) = \log \left(1 - \frac{\alpha_j^2 -
\omega_j^2}{\omega(\theta)^2 + \alpha_j^2}\right).
\]
We now prove several lemmas about these increments.

\begin{lemma}\label{ratio_increment_bound}
 Let $\rho_\va$ be a moderate representation.  For all $j
\gg n$ and $\theta$ in the support of $\xi$ we have \[-L_\va(j, \theta) \gg_\xi \frac{a_j}{n}.\]
 If $j > n - \frac{Cn}{\log n}$ for some constant $C$ and $a_j < 
\frac{n}{\log n}$ then
\begin{equation}\label{L_increment_asymp} -L_\va(j, \theta)
= \frac{2\sigma^2(\theta) a_j}{n} \left(1 + O\left(\frac{1}{\log
n}\right)\right).\end{equation}
\end{lemma}

\begin{proof}
 For both claims, write $\alpha_j^2 - \omega_j^2 =
\frac{a_j}{n}\cdot \frac{a_j + 2j-1}{n}$.  The first claim
is  straightforward.  For the second, observe that
$\frac{a_j + 2j-1}{n} = 2 + O\left(\frac{1}{\log n}\right)$ and
$\frac{1}{\omega^2 + \alpha_j^2} = \sigma^2 \left(1 + O\left(\frac{1}{\log
n}\right)\right)$.
\end{proof}

The next lemma allows us to prove the type of estimate that
we want for Proposition \ref{mixture_char_ratio_bound} for
a collection of increments whose sum is not too large.
\begin{lemma}\label{small_bits_integral}
 Let $C$ be a constant and let $G \subset \zed \cap \left[n-\frac{Cn}{\log n}, n\right]$ be a subset of indices such that $\sum_{j \in G}
a_j < \frac{n}{\log n}$.  Then for a sufficiently large
constant $C'$,
\[
\log \int_{q}^{q'} \exp\left(\sum_{j \in G}
L_\va(j, \theta)\right)d\xi (\theta) \leq -\frac{2
\xi(\sigma^2) \sum_{j \in G} \log d_\va(j)}{n
(\log n + C')}.
\]
\end{lemma}

\begin{proof}
 Since $a_j < \frac{n}{\log n}$ for all $j \in G$, we may
substitute the asymptotic (\ref{L_increment_asymp}) into
the LHS and integrate.  Since the argument of the
exponential is $O\left(\frac{1}{\log n}\right)$, the LHS becomes
\[
\left(1 + O\left(\frac{1}{\log n}\right)\right) \frac{-2 \xi(\sigma^2)}{n} \sum_{j \in G}
a_j.
\]
The result now follows on noting that, in this range, $\log
d_\va(j) \leq a_j(\log n + O(1)).$
\end{proof}

The following dimension lemma is proved as Lemma \ref{dimension_clump_bound} of Appendix \ref{dimension_section}.
\begin{lemma*}
 Let $B$ be the collection of indices
\[
 B = \left\{j \in \left[n- \frac{Cn}{\log n}, n\right]: a_j \in \left(\left(1 - \delta\right)S,
S\right]\right\}
\]
where $\delta = \frac{1}{\log n}$, $C$ is a constant,  and
$1 \leq S \leq \frac{n^{\frac{1}{2}}}{1 + \log n}$.  Suppose that
$\frac{n^{\frac{1}{2}}}{\log n}\leq |B|  \leq \frac{Cn}{\log n}$.  Then
\[
 \log d_\va(B) \stackrel{def}{=} \sum_{j \in B} \log
d_\va(j) \leq \left(1 - \frac{\log |B|}{\log n}\right)
|B|S(\log n + O(1)).
\]
\end{lemma*}

In particular, for each $\theta \in [q,q']$ we have
\begin{equation}\label{char_ratio_bound_for_clump}
 \sum_{j \in B} L_\va(j,\theta) \leq \frac{-4 \sigma^2(q)
\log d_\va(B)}{n(\log n + 2 \log\log  n + O(1))}.
\end{equation}
Indeed, for each $j \in B$, Lemma
\ref{ratio_increment_bound} gives 
\[
 -L_\va(j,\theta) = \frac{2 \sigma^2(\theta) S}{n} \left(1 +
O\left(\frac{1}{\log n}\right)\right)
\]
so that
\[
 \sum_{j \in B} L_\va(j,\theta) \leq -\frac{2 \sigma^2(q)
|B|S}{n} \left(1 + O\left(\frac{1}{\log n}\right)\right),
\]
while 
\[
 d_\va(B) \leq \left(\frac{1}{2} + \frac{\log\log  n}{\log
n}\right) |B| S (\log n + O(1)).
\]

\begin{lemma}\label{char_ratio_shift_lemma}
 Let $1 \leq k \leq n$ and let  $\rho_\va$ be a moderate representation with $a_j =
0$ for $j < k$. Let $\va'$ denote the
string such that $a'(i) = a(i)$ for $i < k$ and $a'(i) =
a(i) + 1$ for $i \geq k$. For some constants $C, C' > 0$
\[
 \log d_{\va'} \leq \log d_\va + Cn
\]
while
\[ 
 L_{\va'} \leq L_\va - C'\left( \frac{n-k}{n}\right)
\]
\end{lemma}

\begin{proof}
 The dimension claim follows from Lemmas
\ref{shift_ratio_bound} and \ref{s_shift_bound}.  The
character ratio claim follows on checking that for each $i
\geq k$ we have $\log \left(1 - \frac{\alpha_i^2 -
\omega_i^2}{\omega^2 + \alpha_i^2}\right)$ decreases by
$\Omega(\frac{1}{n})$ after the shift.
\end{proof}

\begin{proof}[Proof of Theorem~\ref{L^2 cut-off} upper
bound, moderate representations]
We begin with a few observations.  We assume that
$\rho_\va$ is moderate, with $a_n \leq \frac{2 \cdot
10^6}{\sigma(q)} n$.  Therefore, by Lemma
\ref{moderate_dimension_bounds} of Section
\ref{moderate_section}, $\log d_\va \ll n^2$.  Therefore,
we may restrict attention to character ratios for which
$-\log \xi(|r_\va|) \ll \frac{n}{\log n}$.  By Lemmas
\ref{L_a_proportional_lemma} and \ref{ratio_at_saddle_lemma}
of this section, this means that we may assume $-L_\va(q)
\ll \frac{n}{\log n}$, so that, to within negligible error,
we may replace
\[
 \xi(|r_\va|) \qquad \leftrightarrow\qquad
\xi\left(e^{L_\va(\theta)}\right).
\]

Next observe that for a sufficiently large constant $c$, we
may assume $a_i = 0$ for all $i < n - \frac{cn}{\log n}$. 
This is because any string with non-zero entries in this
region may be obtained from one with all zeros by making
shifts of the type described in Lemma
\ref{char_ratio_shift_lemma}, and if $c$ is sufficiently
large, these shifts improve the bound in Proposition
\ref{mixture_char_ratio_bound}.

We next dispatch with any indices for which $a_j >
\frac{n^{\frac{1}{2}}}{\log n}$.  Recall that $a_j \ll n$ so that
(\ref{dimension_increment_bound}) gives \[\log d_\va(j) \leq
a_j(\log n - \log a_j + O(1)).\]  If $a_j \geq
\frac{n}{(\log n)^2}$ then the first part of Lemma
\ref{ratio_increment_bound} gives $L_\va(j, \theta) \gg
\frac{a_j}{n}$. Thus \[- \frac{n \log n}{4 \sigma^2(q)}
L_\va(j, \theta)\gg \frac{\log n}{\log\log  n} \log d_\va(j).\]
 If
$a_j \leq \frac{n}{(\log n)^2}$ then the second part of
Lemma \ref{ratio_increment_bound} guarantees that, for a
sufficiently large constant $C$,
\[
  \frac{n (\log n + 2 \log\log  n + C)}{4 \sigma^2(q)}
L_\va(j,\theta) + \log d_\va(j) \leq 0,
\]
but now the factor of $\log\log  n$ is needed.

Now we treat the indices for which $a_j < \frac{n^{\frac{1}{2}}}{\log n}$
by splitting them into bins.  Let $\delta = \frac{1}{\log n}$ and
let \[I_k = \left[\frac{n^{\frac{1}{2}}}{\log n} (1-\delta)^k,
\frac{n^{\frac{1}{2}}}{\log n}(1-\delta)^{k-1}\right], \qquad 1 \leq
k \leq K= \frac{\log \frac{n^{\frac{1}{2}}}{\log
n}}{-\log(1-\delta)} \sim \frac{1}{2}(\log n)^2.\]  Say an
interval $I_k$ is `big' if there are at least $\frac{n^{\frac{1}{2}}}{\log
n}$ indices $j$ for which $a_j \in I_k$.  By (\ref{char_ratio_bound_for_clump}), if $I_k$ is big then we have, for a
sufficiently large $C $, for all $\theta \in [q,q']$,
\[
 \sum_{j: a_j \in I_k} L_\va(j, \theta) \leq -\frac{4
\sigma^2(q)}{n(\log n + 2 \log\log  n + C)} \sum_{j : a_j \in
I_k} \log d_\va(j).
\]

Let $\mathscr{B}$ be the collection of all indices $j$
belonging to a big interval, together with all $j$ for
which $a_j > \frac{n^{\frac{1}{2}}}{\log n}$.  Let $m \geq n - \frac{cn}{\log n}$
be the least index for which $a_m \neq 0$, and set
$\mathscr{S} = [m,n] \setminus \mathscr{B}$.  
Observe that \[L_\va(\theta) =\sum_{j \in
\mathscr{B}}L_\va(j,\theta) + \sum_{j \in
\mathscr{S}}L_\va(j,\theta) = L_{\va,
\mathscr{B}}(\theta) + L_{\va, \mathscr{S}}(\theta),\] and
\[\log d_\va = \sum_{j \in \mathscr{B}}d_\va(j) + \sum_{j
\in \mathscr{S}}d_\va(j) = \log d_{\va, \mathscr{B}} + \log
d_{\va, \mathscr{S}}.\]

Since
each $j \in \mathscr{S}$ has $a_j$ in an interval that
contains at most $\frac{n^{\frac{1}{2}}}{\log n}$ other indices, we may
bound
\[
 \sum_{j \in \mathscr{S}} a_j \leq
\frac{n^{\frac{1}{2}}}{\log n}\sum_{k = 0}^{K-1}(1-\delta)^k
\frac{n^{\frac{1}{2}}}{\log n} \leq \frac{n}{\log n},
\]
and, therefore, by Lemma \ref{small_bits_integral}, 
\[
\log \int_q^{q'} e^{L_{\va, \mathscr{S}}(\theta)}
d\xi(\theta) \leq -\frac{2 \xi(\sigma^2) \log d_{\va,
\mathscr{S}}}{n(\log n + C)}.
\]
Now we bound 
\begin{align*}
 \log \xi(|r_\va|) &\leq \log \int_q^{q'} e^{L_\va(\theta)}
d\xi(\theta) \\&\leq  \sup_{\theta \in [q, q']} L_{\va,
\mathscr{B}}(\theta) + \log \int_q^{q'} e^{L_{\va,
\mathscr{S}}(\theta)} d\xi(\theta) \\& \leq
-\frac{4\sigma^2(q) \log d_{\va, \mathscr{B}}}{n(\log n + 2
\log\log  n + C)} -\frac{2 \xi(\sigma^2)\log d_{\va,
\mathscr{S}}}{n(\log n + C)},
\end{align*}
completing the proof.
  \end{proof}

\section{$L^\infty$ mixing time} \label{L^infty_section}
In this section we prove Corollary~\ref{L^infty cut-off}.
First we define, for two Borel probability measures on a
compact metric space $(\Omega, \rho)$, the
\emph{$L^\infty$ distance of $\mu$ and $\nu$ w.r.t.
$\nu$}, by 
\begin{align*}
\left\| \mu - \nu\right\|_\infty := \left\|\mu - \nu\right\|_{\infty,\nu} := \sup_{f: \nu(f)
  \le 1} \left|\mu(f) - \nu(f)\right|. 
\end{align*}
If $\mu$ is not absolutely continuous with respect to
$\nu$ then the $L^\infty$ norm is $\infty$, since if $U
=
\left\{x \in \Omega: \frac{d\mu}{d\nu}(x) = \infty\right\}$ then for
any Borel
function $f$ supported on $U$, $\nu(f+1) = 1$. 
Conversely, if $\mu$ has a density with respect to $\nu$,
then we have
the alternative definition: 
\begin{align*}
\left\| \mu -\nu\right\|_\infty = \esssup_{x
\in \Omega} \left|\frac{d\mu}{d\nu}(x) -1\right|.
\end{align*}
The equivalence of the two definitions is trivial in the discrete
case. For compact metric space, use the fact that
$\frac{d\mu}{d\nu}(x) = \lim_{r \to 0^+}
\frac{\mu(B_r(x))}{\nu(B_r(x))}$, where $B_r(x)$ is metric ball of
radius $r$ around $x$. 

Now specializing to the running distributions of a Markov chain, the
first definition further implies that for any starting state $x$, the
$L^\infty$ distance $d_\infty(t) := \left\| P_x^t - \nu\right\|_\infty$ is
monotone non-increasing, since  
\begin{align*}
\left\|P_x^{t+1} - \nu\right\|_\infty &= \left\|P_x^t P - \nu P \right\|_\infty = \sup_{f:
  \nu(f) \le 1}\left| P_x^t(P(f)) - \nu(P(f))\right| \\ 
 &\le \sup_{g: \nu(g) \le 1} \left|P_x^t(g) - \nu(g)\right| = \left\|P_x^t - \nu\right\|_\infty,
\end{align*}
using $\nu(f) \le 1$ implies $\nu(P(f)) \le 1$, by Markov property.

  Now recall the Fourier inversion formula for compact Lie groups: for
  any $f \in L^2(SO(2n+1))$, 
\begin{align*}
f(x) = \sum_{\rho \in \widehat{SO(2n+1)}} d_\rho \tr \left[\hat{f}(\rho) \rho(x)^*\right].
\end{align*}
Let $f_t = \frac{d\mu_t}{d\nu} -1 $, where $\mu_t = \delta_{\Id} \cdot 
P^t$. Since we are interested in the situation where
$\esssup_{x \in
  \Omega}\frac{d\mu_t}{d\nu}(x) < \infty$, certainly we may assume
$f_t \in L^2(SO(2n+1))$. Since $\mu_t = \mu^{\ast t}$ is a convolution
product and invariant under conjugation, $\hat{f_t}(\rho_\va) =
\xi(r_\va(\theta))^t I_{d_\va}$, that is, a constant times the
identity matrix. In particular, $\hat{f_t}(\rho_{\vO}) = 1$. Therefore  
\begin{align*}
f_t(x) = \sum_{\va\neq \vO} d_\va^2 r_\va^t \chi_\va(x);
\qquad \chi_\va(x) =
\frac{1}{d_\va} \tr \rho_\va(x).  
\end{align*}
Thus when $t = 2 s$ is even, the maximum of $\left|f_t(x)\right|$ occurs at $x
\in \{\pm \Id\}$ ($r_\va:= \frac{1}{d_\va}\tr\hat{\mu}(\rho)$ can be
either positive or negative). But at $x = \Id$ say, we have $f_t(\Id)
= \sum^*_\va d_\va^2 r_\va^t$, which coincides with $\left\|P_\Id^s -
\nu_n\right\|_2$. Thus $\left\|P_\Id^t - \nu_n\right\|_\infty = \left\|P_\Id^s - \nu_n\right\|_2$
and the Corollary follows from monotonicity of the
$L^\infty$ distance. 

\section{Open problems}
We have seen the power of the contour representation formula
\eqref{contour_integral_formula} in proving sharp estimates for the
size of the character ratio, which is instrumental in studying the
convergence rate of the uniform plane Kac walk under various
norms. Several issues still remain: 
\begin{itemize}
\item  What is the true cut-off window of $L^2$ and $L^\infty$
  convergence? We show that it is of order $O(n
\log\log  n)$ for measures of a specific type.  For a sum of point masses, we may expect that $O(n)$ is the true order.  Does a cut-off exist regardless of the measure?
\item  Theorem~\ref{L^2 cut-off} artificially assumes that the generating measure $\mu$ be supported away from $\pi \in \T_0$. Probably this is an artifact of our proof and can be removed with more effort. 
\item  For a fixed $n$ it would be nice to determine the
first step at which the $L^2$ distance from uniform
becomes finite.  Carefully tracing our treatment of large
representations shows that this happens in time $O(n)$,
while a time $\geq n$ is necessary, because prior to this
point the measure is supported on a lower dimensional
submanifold.
\item In Appendix \ref{general contour SO(N)} we prove a generalization of the
the contour formula
  \eqref{contour_integral_formula} to the case where the generating
  measure of the random walk is supported on bigger conjugacy classes
  of $SO(2n+1)$, such as $SO(2k) \subset SO(2n+1)$. It remains to be seen whether
multidimensional analogues of our arguments yield the
corresponding mixing time analysis for the related random
walks.  A reasonable lower bound and candidate mixing
time is available in this case from the second moment
method applied to the natural representation.

\item In \cite{GYChen} it is proved that under very general
conditions on a
finite state space Markov chains, $\ell^p$-cut-off implies
$\ell^q$-cut-off for $q > p$. Does the same hold for our continuous
state space walk?
\end{itemize}

\appendix

\section{Bounds for dimensions}\label{dimension_section}
This section collects together the basic estimates for dimensions of
representations that we  need throughout the mixing time upper
bound argument.  We conclude by 
proving Proposition \ref{sum_of_dimension_bound}.

Since $d_\vO = 1$, the dimension equation (\ref{dimension formula}) may be
written as\footnote{Recall $\ta_j = a_j + j-\frac{1}{2}$.}
\[  d_\va = \prod_{k = 1}^n \frac{\ta_k}{k-\frac{1}{2}} \prod_{1 \leq j < k \leq
n}\frac{\ta_k^2 - \ta_j^2}{\left(k-\frac{1}{2}\right)^2 - \left(j-\frac{1}{2}\right)^2}.\] We may treat this product
incrementally as
\[ d_\va = \prod_{k = 1}^n d_\va(k), \qquad d_\va(k) = \frac{\ta_k}{k-\frac{1}{2}}
\prod_{1 \leq j < k} \frac{\ta_k^2 - \ta_j^2}{\left(k-\frac{1}{2}\right)^2 - \left(j-\frac{1}{2}\right)^2}.\]
If we replace $\ta_j$ with $j-\frac{1}{2} \leq \ta_j$ in the
product above, then we find
\begin{equation}\label{dimension_trivial_incremental} d_\va(k) \leq \frac{a_k + k - \frac{1}{2}}{k-\frac{1}{2}}
\frac{(a_k + 2k - 2)!}{a_k! (2k-2)!}.\end{equation}  In
particular, this gives the following weak estimate for the
dimension.
\begin{lemma}\label{incremental_dimension_bound}
 Recall the notation $\alpha_k = \frac{\ta_k}{n}$.  We have, for
each $k$,
\[ \log d_\va(k) \leq a_k \log \left(1 +
\frac{2k-1}{a_k}\right) + (2k-1) \log\left(1+\frac{a_k
}{2k-1}\right) + O(1).\]  In particular, 
\[  \log d_\va \leq O(n^2) + 2n \sum_{k: \alpha_k \geq 1}
\log \alpha_k.\]
\end{lemma}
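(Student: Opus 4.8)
The starting point is the crude incremental bound \eqref{dimension_trivial_incremental}, which says $d_\va(k) \le \binom{a_k + 2k-1}{a_k}$. For the first displayed inequality I would invoke the elementary entropy estimate
\[
 \binom{m+\ell}{m} \le \frac{(m+\ell)^{m+\ell}}{m^m\,\ell^\ell}, \qquad m,\ell \ge 1,
\]
which follows at once from $\binom{m+\ell}{m}\,p^m(1-p)^\ell \le 1$ with $p = m/(m+\ell)$. Taking logarithms and using the identity $(m+\ell)\log(m+\ell) - m\log m - \ell\log\ell = m\log\!\left(1+\tfrac{\ell}{m}\right) + \ell\log\!\left(1+\tfrac{m}{\ell}\right)$ with $m = a_k$, $\ell = 2k-1$ yields the first bound (in fact even without the $O(1)$, which is retained only for slack; the degenerate case $a_k=0$ is immediate since then $d_\va(k)\le\binom{2k-1}{0}=1$ by \eqref{dimension_trivial_incremental}).

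For the ``in particular'' assertion, I would sum the per-$k$ bound over $1 \le k \le n$ and verify that every contribution is $O(n^2)$ except the piece $2n\sum_{k:\alpha_k\ge1}\log\alpha_k$. The $O(1)$ terms sum to $O(n)$. By $\log(1+x)\le x$, the first term satisfies $a_k\log(1+\tfrac{2k-1}{a_k})\le 2k-1$, and $\sum_{k=1}^n(2k-1)=n^2$. For the second term $(2k-1)\log(1+\tfrac{a_k}{2k-1})$, I split on the size of $\alpha_k$. If $\alpha_k<1$ then $a_k<n$, so $(2k-1)\log(1+\tfrac{a_k}{2k-1})\le a_k<n$, and there are at most $n$ such $k$. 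If $\alpha_k\ge1$, then since $a_k+2k-1=\ta_k+(k-\tfrac12)\le 2\ta_k=2n\alpha_k$,
\[
 (2k-1)\log\!\left(1+\frac{a_k}{2k-1}\right) \le (2k-1)\log\frac{2n\alpha_k}{2k-1} = (2k-1)\log\alpha_k + (2k-1)\log\frac{2n}{2k-1};
\]
here $(2k-1)\log\alpha_k\le 2n\log\alpha_k$ since $\log\alpha_k\ge0$, while $\sum_{k=1}^n(2k-1)\log\tfrac{2n}{2k-1}=O(n^2)$ because $x\mapsto x\log(2n/x)$ is at most $2n/e$ on $[1,2n]$, so each of the $n$ summands is $<n$. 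Adding everything up gives $\log d_\va=\sum_k\log d_\va(k)\le O(n^2)+2n\sum_{k:\alpha_k\ge1}\log\alpha_k$.

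The computation is entirely elementary and poses no genuine obstacle; the one point meriting care is that the terms $(2k-1)\log(1+\tfrac{a_k}{2k-1})$ arising from small $k$ with very large $a_k$ must not collectively overshoot the $2n\sum\log\alpha_k$ budget. This is exactly why those terms are routed through $a_k+2k-1\le 2\ta_k=2n\alpha_k$ rather than the lossy $a_k+2k-1\le a_k+2n$, and why the $\log\alpha_k$ factor is extracted directly instead of being absorbed into an error of size $\log a_k$.
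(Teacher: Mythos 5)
Your argument is correct and matches the paper's (very terse) proof in outline: the first bound is Stirling/entropy applied to \eqref{dimension_trivial_incremental}, and the second follows by summing, using $\log(1+x)\le x$ together with the observation $a_k+2k-1\le 2\tilde a_k=2n\alpha_k$ when $\alpha_k\ge 1$. You have simply supplied the details the paper leaves implicit.
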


\begin{proof}
 The first line follows from Stirling's approximation, and the second follows
on summing.
\end{proof}

To get more refined estimates, we  frequently
make index-shift arguments, and so it is most convenient to 
 attach to representation $\rho_\va$ the shift-index
$\vs$ given by
\[\vs = (s_1, ..., s_n), \qquad s_1 = a_1, \quad s_i = a_i - a_{i-1},
\qquad i = 
2, ..., n.\] Note that as $\va$ runs over non-zero
weakly increasing strings of length $n$, $\vs$ runs over all non-zero
strings in 
$\N^n$, and $\va$ is recovered from $\vs$ as $a_j = \sum_{i \leq j}s_i$.  We
abuse notation by writing $d_\vs = d_\va$ for $\vs$  associated to $\va$.

Treating the product $d_\vs$ now as a whole, we may split it as
$d_\vs = d_\vs^- d_\vs^+ d_\vs^0,$ 
where
\begin{align*}
 d_\vs^- &= \prod_{1 \leq j < k \leq n} \frac{\left[\sum_{j < i \leq k}
s_i\right] + k-j}{k-j}\\
d_\vs^+ &= \prod_{1 \leq j < k \leq n} \frac{\left[ \sum_{i \leq j} s_i +
\sum_{i \leq k} s_i\right] + j+k-1}{j+k-1}\\
d_\vs^0 &= \prod_k \frac{\left[\sum_{i \leq k} s_i\right] + k-\frac{1}{2}}{k-\frac{1}{2}}
\end{align*}
Note that $d_\vs^* \geq 1$.

Let $\ve_i$ denote the $i$th standard basis vector in $\N^n$.  Our
first set of bounds concern dimensions of shifts supported at the
standard basis vectors. 
\begin{lemma}\label{shift_ratio_bound}
 For any $\vs_0 \in \N^{n-i}$  we have
\[ \frac{d_{(\vO_{i},  \vs_0)+ \ve_{i+1}}}{d_{(\vO_i,
\vs_0)}} \leq d_{\ve_{i+1}}.\] 
\end{lemma}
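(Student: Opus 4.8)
The plan is to use the factorization $d_\vs = d_\vs^- d_\vs^+ d_\vs^0$ just recorded, and to compare $d_{\vs}$ with $d_{\vs'}$, where $\vs := (\vO_i,\vs_0)$ and $\vs' := \vs + \ve_{i+1}$, one factor at a time within each of the three products. The first step is pure bookkeeping: passing from $\vs$ to $\vs'$ leaves the partial sums $\sum_{l\le k}s_l$ unchanged for $k\le i$ and increases each of them by exactly $1$ for $k\ge i+1$, and changes nothing else. Reading this through the definitions of $d^0_\vs$, $d^-_\vs$, $d^+_\vs$, the $k$th factor of $d^0$ gains $1$ in its numerator precisely when $k\ge i+1$; the $(j,k)$ factor of $d^-$ gains $1$ exactly when $j\le i<k$ (i.e.\ when $i+1\in(j,k]$) and is otherwise unchanged; and the $(j,k)$ factor of $d^+$ gains $2$ when $i+1\le j<k$, gains $1$ when $j\le i<k$, and is otherwise unchanged. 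Hence, recalling $\ta_k = (\sum_{l\le k}s_l)+k-1/2$,
\begin{align*}
\frac{d^0_{\vs'}}{d^0_\vs}&=\prod_{k\ge i+1}\frac{\ta_k+1}{\ta_k},\qquad \frac{d^-_{\vs'}}{d^-_\vs}=\prod_{j\le i<k}\frac{(\ta_k-\ta_j)+1}{\ta_k-\ta_j},\\
\frac{d^+_{\vs'}}{d^+_\vs}&=\prod_{i+1\le j<k}\frac{(\ta_j+\ta_k)+2}{\ta_j+\ta_k}\prod_{j\le i<k}\frac{(\ta_j+\ta_k)+1}{\ta_j+\ta_k}.
\end{align*}

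The second step supplies the elementary inequalities that bound each of these factors by its value at the minimal weight $\vO$. From $0\le a_1\le\cdots\le a_n$ we get $\ta_k\ge k-1/2$, and for $j<k$ both $\ta_k-\ta_j\ge k-j$ and $\ta_j+\ta_k\ge j+k-1$; moreover $x\mapsto\frac{x+1}{x}$ and $x\mapsto\frac{x+2}{x}$ are decreasing on $(0,\infty)$. Combining these, every factor displayed above is at most the corresponding factor obtained by substituting the minimal values $\ta_k=k-1/2$, $\ta_k-\ta_j=k-j$, $\ta_j+\ta_k=j+k-1$. But those minimal values are exactly the quantities attached to the shift-index $\vO$, and since $d^0_\vO=d^-_\vO=d^+_\vO=1$ while the increments ($1$, $2$) produced by $+\ve_{i+1}$ starting from $\vO$ are literally the same ones appearing above, the product of the substituted factors is precisely $d^0_{\ve_{i+1}}$, $d^-_{\ve_{i+1}}$, and $d^+_{\ve_{i+1}}$ respectively. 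Therefore $d^*_{\vs'}/d^*_\vs\le d^*_{\ve_{i+1}}$ for each $*\in\{0,-,+\}$.

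Multiplying the three inequalities gives $d_{\vs'}/d_\vs\le d^0_{\ve_{i+1}}d^-_{\ve_{i+1}}d^+_{\ve_{i+1}}=d_{\ve_{i+1}}$, which is the assertion. I do not anticipate a genuine obstacle; the only place requiring care is the first step — identifying exactly which index pairs $(j,k)$ contribute to $d^-$ and $d^+$ and the precise numerator increment in each case — since it is this combinatorial matching that makes the factor-by-factor comparison collapse cleanly onto the definition of $d_{\ve_{i+1}}$.
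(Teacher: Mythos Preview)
Your proof is correct and follows exactly the approach the paper intends: the paper's own proof is the single sentence ``This is immediate from comparing separately each factor $d_\vs^-$, $d_\vs^+$ and $d_\vs^0$,'' and you have simply written out that comparison carefully. Your bookkeeping and monotonicity arguments are accurate; in fact your argument nowhere uses that the first $i$ entries of $\vs$ vanish, so you have actually proved the slightly stronger statement $d_{\vs+\ve_{i+1}}/d_\vs\le d_{\ve_{i+1}}$ for arbitrary $\vs\in\N^n$.
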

\begin{proof}
 This is immediate from comparing separately each factor $d_\vs^-$,
 $d_\vs^+$ and $d_\vs^0$. 
\end{proof}

We can easily bound $d_{ \ve_i}$.
\begin{lemma}\label{s_shift_bound}
Let $m = \min(i-1, n-i + 1)$.  We have the bound
\[
 \log d_{ \ve_i} \leq m\left[1 + \log 2 + \log
\frac{n}{m} + \log
\frac{n}{i-\frac{1}{2}}\right] + 2(n-i)[1 + \log 2] + \log \frac{n+\frac{1}{2}}{i-\frac{1}{2}} .
\]
In particular, $\log d_{\ve_i} = O(n)$.
\end{lemma}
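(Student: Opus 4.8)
The plan is to specialise the product decomposition $d_\vs = d_\vs^- d_\vs^+ d_\vs^0$ recorded above to the minimal shift $\vs = \ve_i$ and to evaluate each factor in closed form. For this $\vs$ the weight has $a_j = 1$ for $j \ge i$ and $a_j = 0$ for $j < i$, so $\ta_j = j - \tfrac12$ for $j < i$ and $\ta_j = j + \tfrac12$ for $j \ge i$; hence in each of the three products only the factors ``straddling'' the index $i$ differ from $1$, and each such product telescopes. I would first record: $d_{\ve_i}^0 = \prod_{k=i}^{n} \frac{k+1/2}{k-1/2} = \frac{2n+1}{2i-1}$; $d_{\ve_i}^- = \prod_{j=1}^{i-1}\prod_{k=i}^{n}\frac{k-j+1}{k-j} = \prod_{j=1}^{i-1}\frac{n-j+1}{i-j} = \binom{n}{i-1}$; and $d_{\ve_i}^+ = B_1 B_2$, where $B_1 = \prod_{j=1}^{i-1}\prod_{k=i}^{n}\frac{j+k}{j+k-1} = \prod_{j=1}^{i-1}\frac{j+n}{j+i-1}$ and $B_2 = \prod_{i \le j < k \le n}\frac{j+k+1}{j+k-1} = \prod_{j=i}^{n-1}\frac{(j+n)(j+n+1)}{2j(2j+1)}$. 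Since every factor of $d_\vs^*$ is $\ge 1$, all these quantities are $\ge 1$, so taking logarithms is harmless.

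Next I would bound the four pieces. For $d_{\ve_i}^-$, writing $m = \min(i-1,\,n-i+1)$ so that $\binom{n}{i-1} = \binom{n}{m}$, the inequality $\binom{n}{m} \le (en/m)^m$ gives $\log d_{\ve_i}^- \le m\bigl(1 + \log(n/m)\bigr)$. For $B_1$ I would keep two elementary estimates available: each factor satisfies $\frac{j+n}{j+i-1} \le \frac{n}{i-1}$ (equivalent to $i \le n+1$), so $\log B_1 \le (i-1)\log\frac{n}{i-1}$; and $\log B_1 \le \sum_{j=1}^{i-1}\frac{n-i+1}{j+i-1} \le (n-i+1)\log 2$, using $\sum_{\ell=i}^{2i-2}\ell^{-1} \le \log 2$. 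Using the first estimate when $m = i-1$ and the second when $m = n-i+1$, a short case check (invoking $i - \tfrac12 \le 2m$ in the first case and $\log\frac{n}{i-1/2} \ge 0$ in the second) produces $\log\bigl(d_{\ve_i}^- B_1\bigr) \le m\bigl[1 + \log 2 + \log\tfrac nm + \log\tfrac{n}{i-1/2}\bigr]$. Finally, $\log d_{\ve_i}^0 = \log\frac{2n+1}{2i-1} \le \log\frac{n}{i-1/2} + O(1/n)$.

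The one factor needing real care is $B_2$: a term-by-term estimate is far too wasteful — for $i=1$ it would yield $\log d_{\ve_i}$ of order $n\log n$, whereas the truth is $O(n)$ — because $B_2$ is genuinely of exponential size, of order $4^{\,n-i}$. Here I would shift indices $(j,k)\mapsto(j-i+1,\,k-i+1)$ to rewrite $B_2 = \prod_{1\le j<k\le M}\frac{j+k+2i-1}{j+k+2i-3}$ with $M = n-i+1$, and then use that $x \mapsto \frac{x+1}{x-1}$ decreases for $x>1$ to dominate this by the $i=1$ instance $\prod_{1\le j<k\le M}\frac{j+k+1}{j+k-1}$, which telescopes to $\frac{1}{M+1}\binom{2M}{M} \le 4^{M}$. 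Hence $\log B_2 \le 2(n-i+1)\log 2$. Summing the four bounds and absorbing the $O(1)$-type slack into the very generous $m[\cdots]$ and $2(n-i)[\cdots]$ terms — checking directly the boundary cases $i=1$ (where $m=0$) and $i=n$ (where $B_2$ is empty) — yields the claimed inequality; and then $\log d_{\ve_i} = O(n)$ follows since $m\log(n/m) \le n/e$, $m\log\frac{n}{i-1/2} = O(n)$, and $n-i \le n$.

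The main obstacle is exactly this treatment of $B_2$: one must resist bounding it factorwise and instead compare it, via monotonicity of $x\mapsto\frac{x+1}{x-1}$, with a central binomial coefficient. The remaining work is the bookkeeping needed to route the contributions from $d_{\ve_i}^-$, $B_1$ and $d_{\ve_i}^0$ into the precise ``buckets'' of the stated bound, which is what forces the two-case split for $B_1$; this is routine once the telescoped closed forms are in hand.
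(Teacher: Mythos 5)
Your proof is correct, and the closed forms you obtain by telescoping ($d_{\ve_i}^- = \binom{n}{i-1}$, $d_{\ve_i}^0 = \frac{2n+1}{2i-1}$, $B_1 = \prod_{j<i}\frac{j+n}{j+i-1}$, and $B_2$ a descendant of the Catalan number $\frac{1}{M+1}\binom{2M}{M}$) are a clean and transparent way to get the stated bound for the shift $\ve_i$. This is genuinely a different route from the paper's: the paper never evaluates the products, but instead applies $1+x \leq e^x$ to each factor of $d_{s\ve_i}^{-}$, $d_{s\ve_i}^{+}$, $d_{s\ve_i}^{0}$, reducing everything to harmonic-type sums which it then estimates directly. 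Two points of comparison are worth flagging. First, the paper's factor-wise approach works uniformly in the shift magnitude $s$ and bounds $s^{-1}\log d_{s\ve_i}$; this stronger form is what Corollary~\ref{crude_dimension_bound} actually invokes. Your telescoping is specific to $s=1$; it could be extended (the products telescope to ratios of rising factorials), but the bookkeeping grows, which is presumably why the paper chose the crude exponential bound. Second, your motivating remark that a ``term-by-term estimate\ldots would yield $\log d_{\ve_i}$ of order $n\log n$'' for $B_2$ is actually mistaken: the paper's factor-wise bound gives $\log B_2 \leq 2\sum_{i\le j<k\le n}\frac{1}{j+k-1}$, and this double sum is $O(n)$ (indeed $\sim 2(n-i)\log 2$, matching your Catalan estimate), not $O(n\log n)$ --- the paper bounds it by $2(n-i)(1+\log 2)$ precisely this way. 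So the Catalan comparison, while elegant, is not needed to avoid a logarithmic loss; both routes land on bounds of the same strength. Your final case-checking and slack-absorption (for the $O(1/n)$ from $d^0$ and the extra $\log 2$ at small $n-i$) is slightly hand-waved but sound.
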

\begin{proof}
 Splitting $d_{ \ve_i} = d_{\ve_i}^{-}d_{ \ve_i}^{+} d_{
   \ve_i}^{0}$ as above, we bound each in turn. 
\begin{align*}d_{\ve_i}^{-} & = \prod_{1 \leq j < i}\prod_{i \leq k
    \leq n} \frac{1+ k-j}{k-j} 
\leq \exp\left( \sum_{1 \leq j < i} \sum_{i \leq k \leq n}
  \frac{1}{k-j}\right), 
\end{align*}
so dividing according to $k-j \le m$ and $k-j > m$ below, we have
\[ \log d_{ \ve_i}^{-} \leq \sum_{1 \leq j < i}\sum_{i \leq
  k \leq n} 
\frac{1}{k-j} \leq m + m\left[\frac{1}{m+1} + ... + \frac{1}{n-1}\right] \leq
m\left[1 + \log \frac{n}{m}\right].
\]
Similarly,
\begin{align*}
 d_{ \ve_i}^{+} &= \prod_{1 \leq j < i} \prod_{i \leq k \leq n}
 \frac{ j+k}{j+k-1} \cdot \prod_{i \leq j < k \leq n} \frac{
   j+k+1}{j+k-1} 
\\& \leq \exp\left(  \sum_{1 \leq j < i} \sum_{i \leq k \leq n}
    \frac{1}{j+k-1} + 2 \sum_{i \leq j < k\leq n}
    \frac{1}{j+k-1}\right) 
\end{align*}
so that
\begin{align*}
 \log d_{\ve_i}^{+} &\leq \sum_{1 \leq j < i}\sum_{i \leq
   k \leq n}\frac{1}{j+k-1} + 2 \sum_{i \leq j < k\leq n}
 \frac{1}{j+k-1} 
\\& \leq m\left[\frac{1}{i} + \frac{1}{i+1} + ... +
\frac{1}{2n-1}\right] + 2\left[\frac{1}{2i} + \frac{2}{2i+1} + ... +
\frac{n-i}{n+i-1}\right]
\\& \qquad\qquad\qquad\qquad  + 2(n-i)\left[\frac{1}{n+i} +
\frac{1}{n+i+1} + ...\frac{1}{2n-1}\right] \\&\leq m \log \frac{2n}{i-\frac{1}{2}} +
2(n-i)\left[ 1 + \log 2\right] .
\end{align*}
Finally,
\[d_{\ve_i}^{0} = \prod_{k \geq i} \frac{ k+\frac{1}{2}}{k-\frac{1}{2}}=\frac{n+\frac{1}{2}}{i-\frac{1}{2}}.
\]

\end{proof}
As a consequence of this lemma we get a crude bound for the dimension of
any representation.
\begin{corollary}\label{crude_dimension_bound}
For all sufficiently large $n$, each representation $\rho_\va$ has dimension
bounded by
\[d_\va \leq e^{10 n a_n}.\]
If $a_i = 0$ for all $i \leq \frac{n}{2}$ then
\[ d_\va \leq \exp\left( |\va|\left( \log n + 7 \right)\right).\]
\end{corollary}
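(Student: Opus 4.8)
The plan is to reduce the corollary to the two exact identities for the shift index, together with the single-shift dimension estimate of Lemma~\ref{s_shift_bound}. Write $\va$ in shift coordinates $\vs=(s_1,\dots,s_n)$. First I would establish the multiplicative bound
\[
 d_\va \;\le\; \prod_{i=1}^{n} d_{\ve_i}^{\,s_i}
\]
by building $\vs$ up from the zero string one unit at a time, adding all units at position $i$ before any at position $i-1$ (i.e.\ processing $i=n,n-1,\dots,1$). With this order, at the moment we add a unit at position $i$ the current shift vector has the form $(\vO_{i-1},\vs_0)$ with $\vs_0\in\N^{n-i+1}$, so Lemma~\ref{shift_ratio_bound} applies and each such step multiplies the dimension by at most $d_{\ve_i}$; telescoping over the $s_i$ steps at position $i$, and then over all $i$, gives the display. (The weaker Lemma~\ref{incremental_dimension_bound} does not suffice here, since its $O(n^2)$ term swamps $na_n$ for low-dimensional $\rho_\va$, so the multiplicative route is needed.)

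For the first bound, Lemma~\ref{s_shift_bound} gives $\log d_{\ve_i}\le Cn$ uniformly in $i$ for an absolute constant $C$: in the explicit estimate there the two largest contributions are the term $2(n-i)[1+\log2]$ and the $m\log(n/m)$-type term, each $O(n)$, and one checks $C$ can be taken comfortably below $10$ once $n$ is large. Since $\sum_{i=1}^n s_i=a_n$ (the shift sum telescopes), taking logs in the multiplicative bound yields $\log d_\va\le\sum_i s_i\log d_{\ve_i}\le Cn\,a_n\le 10\,n\,a_n$ for $n$ large, which is the first claim.

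For the second bound, the hypothesis $a_i=0$ for $i\le n/2$ forces $s_i=0$ for $i\le n/2$, so only factors with $i>n/2$ survive in the product. For such $i$ we have $m=\min(i-1,n-i+1)\le n/2$ and $i-\tfrac12\ge n/2$, hence $\log(n/(i-\tfrac12))\le\log2$ and $\log(n/m)\le\log n$; substituting these into Lemma~\ref{s_shift_bound} and absorbing the $O(m)$ remainder shows $\log d_{\ve_i}\le (n-i+1)(\log n+7)$ for $n$ large. Combining with the multiplicative bound,
\[
 \log d_\va \;\le\; \sum_{i>n/2} s_i\,(n-i+1)(\log n+7),
\]
and switching the order of summation gives the identity $\sum_{i>n/2}s_i(n-i+1)=\sum_{j>n/2}\sum_{n/2<i\le j}s_i=\sum_{j>n/2}a_j=|\va|$ (using $a_j=\sum_{i\le j}s_i$ and $s_i=0$ for $i\le n/2$). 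Hence $\log d_\va\le|\va|(\log n+7)$, as claimed.

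The only genuinely delicate part is the bookkeeping inside Lemma~\ref{s_shift_bound} needed to pin down the numerical constants: verifying that the $2(n-i)[1+\log2]$ term fits inside the allotted slack, and that $m(\log n+O(1))+O(1)\le m(\log n+7)$ for every integer $m\ge1$ once $n$ is large. Everything else is the telescoping inequality together with the two exact identities $\sum_i s_i=a_n$ and $\sum_{i>n/2}s_i(n-i+1)=|\va|$.
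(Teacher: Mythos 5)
Your argument is correct and follows essentially the same route as the paper: reduce to shift coordinates, apply Lemma~\ref{shift_ratio_bound} iteratively to factor the dimension over standard basis shifts, and then feed in the single-shift bound of Lemma~\ref{s_shift_bound} together with the identities $\sum_i s_i = a_n$ and $\sum_i (n-i+1)s_i = |\va|$. The one cosmetic difference is that you bound $d_\va \le \prod_i d_{\ve_i}^{s_i}$ by adding one unit at a time, whereas the paper writes the slightly tighter $d_\va \le \prod_i d_{s_i\ve_i}$; since Lemma~\ref{s_shift_bound}'s proof actually controls $s^{-1}\log d_{s\ve_i}$, these two forms yield the identical numerical bounds, so nothing is lost.
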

\begin{proof}
 Note that in shift notation, $a_n = |\vs| = \sum_j s_j$.  By Lemma
\ref{shift_ratio_bound},
\[
 d_\vs \leq \prod_{i=1}^n d_{s_i \ve_i}.
\]
The maximum over $i$ of the upper bound for $s^{-1} \log d_{s \ve_i}$ in Lemma
\ref{s_shift_bound} is less than $10 n$ for all $n$ sufficiently large.
Combining these observations, the first claim of the corollary follows.

To prove the second, observe that $|\va| = \sum_{i} (n-i+1)s_i$.  For all $i >
\frac{n}{2}$ Lemma \ref{s_shift_bound} gives $\log d_{s_i \ve_i} \leq
(n-i+1)s_i(\log n + 7 )$, which suffices. 
\end{proof}

We obtain the following bounds for medium size representations.
\begin{lemma}\label{moderate_dimension_bounds}
Let $\va$ satisfy $a_n \leq \frac{2 \cdot 10^6 n}{\sigma}$
and $\sum a_j \geq
\frac{n}{\sigma\log n}$.
% Set $L=\frac{n}{\theta \log n}$ and $\eta = \lfloor n^{3/4} \rfloor$.  
Then for all sufficiently large $n$, we have the bounds

\[ \exp\left( \frac{n(5-\log \sigma)}{4\cdot 10^6 \log
n}\right) \leq d_\va \leq
\exp\left(\frac{2 \cdot 10^7 n^2}{\sigma}\right) .\]
\end{lemma}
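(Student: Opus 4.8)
The plan is to prove the two inequalities separately. The upper bound is essentially free: by Corollary~\ref{crude_dimension_bound} we have $d_\va \le e^{10 n a_n}$ for all large $n$, and substituting the hypothesis $a_n \le \frac{2\cdot 10^6 n}{\sigma}$ gives exactly $d_\va \le e^{10n\cdot 2\cdot 10^6 n/\sigma} = e^{2\cdot 10^7 n^2/\sigma}$. The lower bound is the part with content, and I would get it from a short dichotomy on the size of $a_n$ applied to the factorization $d_\vs = d_\vs^- d_\vs^+ d_\vs^0$ of Section~\ref{dimension_section}.

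Since $d_\vs^-,d_\vs^0 \ge 1$, it suffices to bound $d_\vs^+ = \prod_{1\le j<k\le n}\bigl(1+\frac{a_j+a_k}{j+k-1}\bigr)$ from below, which I would do via $\log(1+x)\ge \frac{x}{1+x}$. If $a_n \le n$, then every denominator satisfies $a_j+a_k+j+k-1 < 4n$, so $\log d_\va \ge \frac1{4n}\sum_{j<k}(a_j+a_k) = \frac{n-1}{4n}\sum_i a_i$ (each $a_i$ occurs $n-1$ times in the double sum), and the hypothesis $\sum_i a_i \ge \frac{n}{\sigma\log n}$ then yields $\log d_\va \ge \frac{n-1}{4\sigma\log n}$. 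If instead $a_n > n$, keeping only the factors with $k=n$ gives $\log d_\va \ge \sum_{j=1}^{n-1}\log\bigl(1+\frac{a_n}{j+n-1}\bigr) \ge (n-1)\log\bigl(1+\frac{a_n}{2n}\bigr) > (n-1)\log\tfrac32$.

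It then remains to check that each of $\frac{n-1}{4\sigma\log n}$ and $(n-1)\log\frac32$ dominates $\frac{n(5-\log\sigma)}{4\cdot 10^6\log n}$ once $n$ is large. The single point requiring care is that one must not bound $4\sigma\log n$ crudely by $5\log n$ — that is far too lossy when $\sigma$ is small; instead one uses that $\sigma\mapsto\sigma(5-\log\sigma)$ is increasing on $(0,1]$, hence bounded there by its value $5$ at $\sigma=1$. With this the first comparison reduces to $\sigma(5-\log\sigma)\le 10^6\frac{n-1}{n}$, which is immediate, and since the standing hypothesis $\theta>\frac{\log n}{\sqrt n}$ forces $\sigma\ge\frac{\theta}{\pi}\ge\frac{\log n}{\pi\sqrt n}$ and so $5-\log\sigma\le 7+\frac12\log n$, the second comparison is equally routine. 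I do not anticipate a genuine obstacle: with constants as generous as $4\cdot 10^6$ and $2\cdot 10^7$ every comparison has enormous slack, and the only real care needed is to track the $\sigma$-dependence honestly on both sides, which is precisely what the boundedness of $\sigma(5-\log\sigma)$ makes possible.
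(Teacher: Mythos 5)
Your proof is correct and follows essentially the same route as the paper's: the upper bound is the same one-line application of Corollary~\ref{crude_dimension_bound}, and the lower bound again rests on $d_\va \ge d_\va^+ = \prod_{j<k}\bigl(1+\frac{a_j+a_k}{j+k-1}\bigr)$ together with the lower bound on $\sum a_j$. The only difference is cosmetic: the paper handles all $a_n$ at once via the chord inequality $1+x \ge \exp\bigl(\tfrac{x\log C}{C}\bigr)$ with $C = 2\cdot 10^6/\sigma$, whereas you split on $a_n\le n$ versus $a_n>n$ and use $\log(1+x)\ge x/(1+x)$ in the first case — both correctly exploit the standing assumption $\theta\ge \log n/\sqrt n$ to control the $\sigma$-dependence, and both have ample slack.
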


\begin{proof}
The upper bound is immediate from Corollary \ref{crude_dimension_bound}.

For the lower bound, write
\begin{align*}
d_\va \ge d_\va^+ = \prod_{j > i} \frac{a_j + a_i + j+i-1}{j+i-1} \geq
\prod_{j > i}\left(1 + \frac{a_i + a_j}{2n}\right) . 
\end{align*}
The conditions guarantee that for $C:= \frac{2 \cdot 10^6}{\sigma}$,
$\frac{a_i +
  a_j}{2n} < C$, so that $1 + \frac{a_i +a_j}{2n} \geq \exp\left(\frac{(a_i
  + a_j)\log C}{2Cn}\right)$, as may be checked by convexity.  Thus  
\begin{align*}
d_\va \ge \prod_{j > i} \exp\left(\frac{(a_i + a_j)\log C}{2Cn}\right)
\ge \exp\left(\frac{(n-1)\log C}{Cn} \sum a_i\right)  \ge
\exp\left(\frac{n(5-\log \sigma)}{4 \cdot 10^6 \log
n}\right).
\end{align*}
\end{proof}

The preceding bounds are useful when we perform the right-side shifts
before the left ones.  Sometimes we  wish to perform left-side
shifts first.  The following lemma gives bounds in this case. 

\begin{lemma}\label{left_shift_dim_bound}
 Let $\vs \in \N^{j}$.  Let $m =
\min(j, n-j+1)$ and let $1 \leq \eta \leq m$ be a parameter. Write
\[|\vs|_{\eta, \loc} = \sum_{j - \eta \leq i \leq j} s_i.\]
 Then we have the bound
\begin{align*}
 \log \frac{d_{(\vs,\vO)+ \ve_j}}{d_{(\vs, \vO)}}& \leq m \left[ \log \frac{n
+ |\vs|_{\eta,\loc}}{m + |\vs|_{\eta,\loc}}  + \log \frac{n + j}{m+j} +2 \right]\\&
\qquad + \eta  \log (n-j+\eta) + 2(n-j+1) + \log\frac{n}{j}+ O(1) .
\end{align*}

\end{lemma}

\begin{proof}
 With  error at most $O(1)$, we have
\begin{align*}
   \log \frac{d^-_{(\vs,  \vO)+\ve_j}}{d^-_{(\vs,  \vO)}} &= \log \left[
\prod_{1 \leq i < j}\prod_{j \leq k \leq n} \left(1 + \frac{1}{k-i +  \sum_{i
< \ell \leq j} s_\ell } \right) \right]
\\ & \leq  \sum_{j-\eta \leq i < j} \sum_{j \leq k \leq n} \frac{1}{k-i
} + \sum_{i \leq j-\eta} \sum_{j \leq k \leq n}
\frac{1}{k-i + |\vs|_{\eta,\loc}}
\\& \leq \eta \log (n-j+\eta) +  m \left(1 + \log \frac{n + |\vs|_{\eta,\loc}}{m+
|\vs|_{\eta,\loc}} \right). \qquad \text{(see \eqref{sum trick} below)}
\end{align*}
Meanwhile,
\begin{align*}
  \log \frac{d^+_{(\vs,  \vO)+ \ve_j}}{d^+_{(\vs, \vO)}} &\leq \log
  \frac{d^+_{(\vO,\vO) + e_j}}{d^+_{(\vO,\vO)}}\\ 
&\leq \log \left[\prod_{1 \leq i < j} \prod_{j \leq k \leq n} \left(1
    + \frac{1}{i + 
k-1}\right) \cdot \prod_{j \leq i < k \leq n} \left(1 + \frac{1}{i + k -
1}\right) \right]
\end{align*}
The first of these terms is bounded by
\begin{equation} \frac{1}{j} + \frac{2}{j+1} + ... + \frac{m}{j + m-1}
  + m\left[\frac{1}{j+m} 
+ ... + \frac{1}{n + j-1}\right] \leq m\left[1 + \log \frac{n + j}{m+j}\right]
. \label{sum trick} \end{equation}
The second is bounded by
\begin{align*}&\frac{1}{2j} + \frac{2}{2j+1} + ... + \frac{n-j+1}{n + j} +
(n-j+1)\left[ \frac{1}{n + j} + ... + \frac{1}{2n + 2j-1}\right]
\\&\leq 2(n-j + 1). \end{align*}
Finally,
\[ \log \frac{d^0_{(\vs,  \vO)+ \ve_j}}{d^0_{(\vs, \vO)}} = \log \left[
\prod_{k \geq j} \left(1 + \frac{1}{k - \frac{1}{2}  + |\vs|}\right)\right] \leq
\log\frac{n+\frac{1}{2}}{j-\frac{1}{2}} .\]
\end{proof}

The following estimate is used in our discussion of the $L^2$ mixing time. 
\begin{lemma} \label{block_dimension}
 Let $\va(s,t) := (\vO_{n-n^t}, (n^s)_{n^t})$. Also let
 $\va(u) = \va(u,u)$. Then uniformly for $s,t$ in any compact subset of $(0,1)$,   
\begin{align*}
\log d_{\va(s,t)} = (1 - s\vee t)
  n^{s+t} (\log n+O(1)).  
\end{align*}
\end{lemma}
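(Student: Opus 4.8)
The plan is to use the factorization $d_\va = d_\vs^- d_\vs^+ d_\vs^0$ of Section~\ref{dimension_section}, applied to the shift index $\vs$ of $\va(s,t)$, and to show that the entire order-$\log n$ part of $\log d_{\va(s,t)}$ is carried by $d_\vs^-$. Write $p = n - n^t$ and $q = n^t$; then $\vs = n^s\ve_{p+1}$ is a single jump of height $n^s$ at coordinate $p+1$, and $\ta_j = j-\tfrac12$ for $j\le p$ while $\ta_j = n^s + j - \tfrac12$ for $j>p$. Note $|\va(s,t)| = n^s q = n^{s+t}$, and $q < p$ for $n$ large since $t<1$.

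First I would dispose of $d_\vs^0$ and $d_\vs^+$, which are negligible on the scale $n^{s+t}\log n$. A factor of $d_\vs^0 = \prod_k \ta_k/(k-\tfrac12)$ is $\ne 1$ only for $k>p$, where it equals $1 + n^s/(k-\tfrac12) = 1 + O(n^{s-1})$; with $q$ such factors, $\log d_\vs^0 = O(n^{s+t-1})$. Similarly a factor of $d_\vs^+ = \prod_{j<k}(\ta_j+\ta_k)/(j+k-1)$ is $\ne 1$ only when $k>p$, in which case $j+k-1 \gg n$, so the factor is $1 + O(n^s/(j+k-1)) = 1 + O(n^{s-1})$; there are $O(pq) = O(n^{1+t})$ such factors, so $\log d_\vs^+ = O(n^{s+t})$. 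Both are absorbed into the $(\log n + O(1))$ of the claim.

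The substance is the estimate of $d_\vs^- = \prod_{j<k}(\ta_k-\ta_j)/(k-j)$. A factor is $\ne 1$ exactly when $j \le p < k$, where it equals $1 + n^s/(k-j)$. Reindexing by $a = p+1-j \in \{1,\dots,p\}$ and $b = k-p \in \{1,\dots,q\}$, so $k-j = a+b-1$, and grouping terms by $c := a+b-1$, I would write
\[
\log d_\vs^- = \sum_{c\ge 1} N(c)\log\!\Big(1+\frac{n^s}{c}\Big), \qquad N(c) = \#\{(a,b): a+b-1=c\} = \min(c,\,q,\,n-c).
\]
The dominant part is the range $n^{s\vee t} < c \le p$, on which simultaneously $N(c) = q$ and $n^s/c \le 1$: there $\log(1+n^s/c) = n^s/c + O(n^{2s}/c^2)$, and since $\log p = \log n + O(1)$ one gets $q n^s\sum_{n^{s\vee t}<c\le p} 1/c = q n^s\big((1 - s\vee t)\log n + O(1)\big)$, with error $O(q n^{2s}/n^{s\vee t}) = O(n^{s+t})$. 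As $qn^s = n^{s+t}$, this part equals $(1-s\vee t)\,n^{s+t}(\log n + O(1))$.

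It then remains to check that the complementary ranges of $c$ contribute only $O(n^{s+t})$. For $c>p$ we have $N(c) \le q$, at most $q$ values of $c$, and $n^s/c = O(n^{s-1})$, so the total there is $O(q^2 n^{s-1}) = O(n^{2t+s-1}) = o(n^{s+t})$. For $c \le n^{s\vee t}$ one uses $\log(1+n^s/c) = s\log n - \log c + O(1)$ when $c \le n^s$ and $\log(1+n^s/c) \le n^s/c$ when $c > n^s$; the crucial observation — and the only genuinely delicate point — is that summing $\log c$ against the weight $N(c)$ (which is $c$ or the constant $q$) produces a term that cancels the $s\log n$ contribution to leading order, e.g. $\sum_{c\le n^s} c(s\log n - \log c) = O(n^{2s})$ and $q\sum_{q<c\le n^s}(s\log n - \log c) = O(n^{s+t}) + O(n^{2(s\wedge t)}\log n)$, and since $2(s\wedge t) < s+t$ unless $s=t$ (in which case these ranges are empty) the polynomial gap swallows the stray $\log n$, leaving $O(n^{s+t})$. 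Summing the three contributions gives $\log d_\vs^- = (1-s\vee t)n^{s+t}(\log n + O(1))$, hence the lemma, with implied constants uniform once $s,t$ are held in a fixed compact subinterval of $(0,1)$. Were one to miss the cancellation in this last step, the sub-dominant ranges would spuriously appear to contribute at order $n^{s+t}\log n$ and corrupt the leading constant $1-s\vee t$.
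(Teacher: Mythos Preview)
Your proof is correct and follows essentially the same route as the paper: the same $d^0 d^+ d^-$ factorization, the same disposal of $d^0$ and $d^+$ as $O(n^{s+t})$, and the same reindexing of $\log d^-$ by the diagonal variable (your $c = a+b-1$ is the paper's $k = i+j-1$) with weight $N(c) = \min(c,\,n^t,\,n-c)$, isolating the main range $n^{s\vee t} < c \le n-n^t$. The one cosmetic difference is in the sub-dominant ranges: where you track the cancellation between $s\log n$ and $\log c$ explicitly, the paper just applies $\log(1+x)\le x$ to get $\sum_{k\le n^t} k\log(1+n^s/k)\le n^{s+t}$ and $n^t\sum_{k\le n^s}\log(n^s/k)=O(n^{s+t})$ in one line each; your version is a bit more laborious but equally valid.
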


\begin{proof}
As usual, we write $\log d_{\va(s,t)} = \log d_{\va(s,t)}^0
+ \log d_{\va(s,t)}^+ + \log d_{\va(s,t)}^-$. The first two terms are of lower
order:
\begin{align*}
 \log d_{\va(s,t)}^0 &= \sum_{n - n^t < j \leq n} \log
\frac{n^s + j - \frac{1}{2}}{j-\frac{1}{2}} = O\left(n^{s + t -1}\right)\\
 \log d_{\va(s,t)}^+ &= \sum_{i \leq n - n^t}\sum_{n - n^t <
j \leq n}\log \frac{n^s + j + i -1}{j + i - 1} = O\left(n^{s +
t}\right), 
\end{align*}
since the logarithm is $O\left(n^{s-1}\right)$.

Meanwhile

\begin{align*}
 \log d_{\va(s,t)}^- &=
\sum_{i \leq n-n^t} \sum_{n-n^t < j \leq n} \log
\frac{n^s + j-i}{j-i}\\ 
&= \sum_{i \leq n-n^t}\sum_{j \leq n^t} \log
\frac{n^s + j+i-1}{j+i-1}\\ 
&= \sum_{k = 1}^{n^t} k \log \frac{k + n^s}{k} + n^t
\sum_{n^t < k \leq n - n^t} \log \frac{k + n^s}{k} +
\sum_{n- n^t <k \leq n} (n-k)\log \frac{k + n^s}{k}.   
\end{align*}
The last of these sums is $O(n^{s+t})$ since the logarithm
is $O(n^{s-1})$.  The first is also $O(n^{s+t})$ since it
is bounded by [use $\log (1 + x) \leq x$]
\[
 \sum_{k \leq n^t} k \log \left(1 + \frac{n^s}{k}\right)
\leq \sum_{k \leq n^t} n^s = n^{s+t}.
\]
In the middle sum, if $s > t$ then we split the sum further
at $n^s$.  The first part of the sum becomes
\[
 n^t \sum_{n^t < k \leq n^s} \log \frac{k + n^s}{k}\leq
O(n^{s+t}) + n^t \sum_{k \leq n^s} \log \frac{n^s}{k} =
O(n^{s+t}).
\]
We are left with the sum
\[
 n^t \sum_{n^{s \vee t} < k \leq n - n^t}\log \left(1 +
\frac{n^s}{k}\right) = n^t \sum_{n^{s \vee t} < k \leq
n-n^t} \left(\frac{n^s}{k} + O\left(\frac{n^{2s}}{k^2}\right)\right)
\]
which evaluates to the main term, as desired.
\end{proof}

We use the following estimate in the $L^2$ upper bound.

\begin{lemma}\label{dimension_clump_bound}
Let $C$ be a constant, let $\delta = \frac{1}{\log n}$, let $1 \leq S \leq \frac{n^{\frac{1}{2}}}{1 + \log n}$ and let $B$ be the collection of indices
\[
 B = \left\{j \in \left[n- \frac{Cn}{\log n}, n\right]: a_j \in \left(\left(1 - \delta\right)S,
S\right]\right\}.
\] Suppose that
\[\frac{n^{\frac{1}{2}}}{\log n}\leq |B| \leq \frac{Cn}{\log n}.\]  We have
\[
 \log d_\va(B) \stackrel{def}{=} \sum_{j \in B} \log
d_\va(j) \leq \left(1 - \frac{\log |B|}{\log n}\right)
|B|S(\log n + O(1)).
\]
\end{lemma}

\begin{proof}

Let $m$ be the
last index in $B$, and, with an eye toward applying the
block dimension bound in Lemma \ref{block_dimension}, set $T
= |B|$.  Obviously those indices greater than $m$ do not
affect $d_\va(B)$, so henceforth we  write $\va$ in
place of $\va(m)$, the weight  truncated at $m$
corresponding to a representation on $SO(2m+1)$.

Factor $d_\va(B)$ as $d_\va(B) = E_\va(B) \cdot I_\va(B)$
where $E_\va(B)$ corresponds to factors pairing indices in
$B$ with those outside,
\[
 E_\va(B) = \prod_{j \in B} \frac{\ta_j}{j-\frac{1}{2}} \prod_{i
\leq m-T} \frac{\ta_j^2 - \ta_i^2}{(j-\frac{1}{2})^2 - (i-\frac{1}{2})^2},
\]
and where $I_\va(B)$ pairs indices both inside $B$,
\[
 I_\va(B) = \prod_{j \in B} \prod_{m-T \leq i < j}
\frac{\ta_j^2 - \ta_i^2}{(j-\frac{1}{2})^2 - (i-\frac{1}{2})^2}.
\]
Set $s = \frac{\log S}{\log m}$, $t = \frac{\log
T}{\log m}$ and write as before $\va(S, T) =
(\vO_{m-m^t}, (m^s)_{m^t})$.  Then we may bound $E_\va(B)
\leq d_{\va(s, t)}$ so that Lemma
\ref{block_dimension} yields the bound
\[
 \log E_\va(B) \leq (1 - s\vee t)ST (\log n + O(1)).
\]
This is of the right size, so we now work to show that
$\log I_\va(B)$ is of lower order by a factor of $\log n$.

Write $I_\va(B) = I_\va(B)^+ I_\va(B)^-,$ where
\[
   I_\va(B)^+ =
\prod_{j \in B} \prod_{m-T< i < j} \frac{ a_i + a_j +i +
j -1}{i + j -1}, \quad I_\va(B)^- = \prod_{j \in B}
\prod_{m-T< i < j} \frac{a_j - a_i + j-
i }{j-i}
\]
Then we have the bound
\begin{align*}
 \log I_\va(B)^+ \leq \sum_{j \in B} \sum_{m-T <i < j}
\log \left(1 + \frac{S}{n\left(1-O\left(\frac{1}{\log n}\right)\right)}\right) \leq \left(1 +
O\left(\frac{1}{\log n}\right)\right) \frac{ST^2}{2n},
\end{align*}
which more than suffices, since $T \leq \frac{Cn}{\log n}$. To bound
$I_\va(B)^-$ consider the representation
$\rho_\vb$ on $SO(2T+1)$ with $b_i = a_{m-T + i} -
a_{m-T}\leq \delta S$.  Then $I_\va(B)^- = d_{\vb}^- \leq
d_\vb$. Since $S \leq T$, the bound
(\ref{dimension_increment_bound}) gives [use $\log(1 + x) \leq x$]
\[
 \log d_\vb \leq \sum_{i=1}^T b_i (\log T + O(1)) \leq \delta ST
(\log n + O(1)).
\]
Since $\delta = \frac{1}{\log n}$ this completes the proof.
\end{proof}

It is also convenient to have some lower bounds for dimensions.  The following bound is a useful reference point.

\begin{lemma}\label{first_dimension_lower_bound}
 There is $c > 0$ such that
\[d_\va \geq \exp\left(c \min\left(n, \sum_j a_j\right)\right).\] 
\end{lemma}
\begin{proof}
 Write $\va \leftrightarrow \vs$ and write
 \[
  d_{\vs}^+ \geq \prod_{k=1}^n \prod_{1 \leq j \leq \frac{k}{2}} \left(1 + \frac{\sum_{i \leq k} s_i}{j+k-1}\right) \geq \exp\left( c \sum_{k = 1}^n \min(a_k, k)\right).
 \]
This suffices, since if $a_k \geq k$ for any $k$, then $a_\ell \geq k$ for all $\ell > k$, whence a lower bound of $\exp(c' (n-k)k)$.
\end{proof}

The next lemma is useful for proving Proposition \ref{sum_of_dimension_bound}.

\begin{lemma}\label{dimension_lower_bound}
 Let $\eta$, $1 \leq \eta \leq n$ be a parameter, and  set $\N^n = \N^{n-\eta}
\oplus
\N^{\eta}$.  We have
\[d_{(\vs_1, \vs_2)} \geq d_{(\vs_1, \vO)}^+ d_{(\vO, \vs_2)}^-.\]  Set
$|\vs_1| = \sum_{i \leq n - \eta} s_i$.  We have
\[ d^+_{(\vs_1, \vO)} \geq \left(1 + \frac{|\vs_1|}{2n}\right)^{n \eta
  - \eta^2}.\]  In particular, 
 for $|\vs_1|< n$, and for $\eta < n^{1-\epsilon}$ and all
 sufficiently large $n$,  
\[ d_{(\vs_1, \vO)}^+ \geq e^{\frac{|s_1| \eta}{3}}. \]
Define $|\vs_2|_p = \sum_{i < \eta} ((\eta - i + 1)s_i)$.  Then
\[ d_{(\vO, \vs_2)}^- \geq \prod_{1 \leq j < n - \eta} \prod_{1 \leq k \leq
\eta}\left(1 +
\frac{\sum_{i \leq k} s_i}{\eta + j}\right)\geq \prod_{1 \leq j <
n-\eta}\left(1 
+
\frac{|\vs_2|_p}{\eta + j}\right).\]
\end{lemma}
\begin{proof}
 The identity $d_{(\vs_1, \vs_2)} \geq d_{(\vs_1, \vO)}^+ d_{(\vO, \vs_2)}^-$
is immediate from the definitions.

Now we have
\begin{align*}
 d_{(\vs_1, \vO)}^+ &= \prod_{1 \leq j < k \leq n}\left( \frac{\left[
       \sum_{i \leq 
j} s_i + \sum_{i \leq k}s_i\right] + j+ k - 1}{j+k-1}\right)
\\& \geq \prod_{j < n-\eta} \prod_{n-\eta \leq k \leq n} \left(\frac{j+k-1 +
|\vs_1|}{j+k-1}\right)
 \geq \left(1 +
\frac{|\vs_1|}{2n}\right)^{n \eta - \eta^2},
\end{align*}
proving the bound for $d_{(\vs_1, \vO)}^+$ for sufficiently large $n$.
Meanwhile, denoting $s_i$ the components of $\vs_2$,
\begin{align*}
 d_{(\vO, \vs_2)}^- &\geq \prod_{1 \leq j < n-\eta} \prod_{1 \leq k \leq
\eta}\left(1 + \frac{\sum_{i \leq k} s_i}{k + n-\eta - j}\right)
\\ & \geq \prod_{1 \leq j < n - \eta} \prod_{1 \leq k \leq \eta}\left(1 +
\frac{\sum_{i \leq k} s_i}{\eta + j}\right) \qquad \text{(switching $j
\leftrightarrow n- \eta -j$)}\\ 
& \geq \prod_{1 \leq j < n-\eta}\left(1 + \frac{\sum_{1 \leq k \leq \eta}
\sum_{i \leq k} s_i}{\eta + j}\right) = \prod_{1 \leq j < n-\eta}\left(1 +
\frac{|\vs_2|_p}{\eta + j}\right).
\end{align*}
\end{proof}

We conclude the section by proving Proposition \ref{sum_of_dimension_bound}.
\begin{proof}[Proof of Proposition \ref{sum_of_dimension_bound}]
Set $\eta = \left\lfloor n^{3/4} \right\rfloor$. Then letting $\N^n = \N^{n-\eta} \oplus
\N^{\eta}$ observe
$d_{(\vs_1, \vs_2)} \geq d_{(\vs_1, \vO)}^+ d_{(\vO, \vs_2)}^-$, $d_\vO =1$,
and therefore
\begin{align*} \sum_{\vO \neq \vs \in \N^{n} } d_\vs^{\frac{-c}{\log n}} &=
-1+\sum_{\vs} d_\vs^{\frac{-c}{\log n}}
 \\&
 \leq -1 + \left(\sum_{\vs_1 \in \N^{n-\eta}} \left(d_{(\vs_1,
\vO)}^+\right)^{\frac{-c}{\log n}}\right)\cdot \left( \sum_{\vs_2 \in
\N^{\eta}}\left(d_{(\vO, \vs_2)}^-\right)^{\frac{-c}{ \log n}}\right),
\end{align*}
so it suffices to show that each sum on the right is $1 +
O(e^{-c})$ as $n \to 
\infty$.

We first handle the sum over $\vs_1$. Observe that $\# \{\vs \in \N^n:
|\vs|=j\} \le n^j \wedge j^n$. Applying the bound of Lemma 
\ref{dimension_lower_bound},
\begin{align*}
 \sum_{\vs_1 \in \N^\eta} (d_{(\vs_1, \vO)}^+)^{\frac{-c}{\log n}} &\leq 1 + \sum_{j =
1}^\infty \#\{\vs \in \N^{n-\eta}: |\vs| = j\}\left(1 +
\frac{j}{2n}\right)^{ \frac{-cn^{\frac{7}{4}}}{\log n}} 
\\ & \leq 1 + \sum_{j
=1}^{4n} n^j e^{\frac{-c'j n^{\frac{3}{4}}}{\log n}} + \sum_{j = 4n}^\infty j^n
\left(\frac{2j}{2n}\right)^{\frac{-c n^{\frac{7}{4}}}{\log n}} 
\\& = 1 + o(1), \qquad n \to \infty.
\end{align*}

Now we consider the sum over $\vs_2$.  Again applying the bound of the
lemma,
\[ \sum_{\vs_2 \in \N^\eta} \left(d_{(\vO, \vs_2)}^-\right)^{\frac{-c}{\log
  n}} \leq 1 + 
\sum_{q = 1}^\infty \#\{\vs \in \N^\eta: |\vs|_p = q\} \left[ \prod_{j
    < n-\eta} 
\left(1 + \frac{q}{\eta + j}\right)\right]^{\frac{-c}{\log n}}.\]
Evidently $\#\{\vs \in \N^\eta: |\vs|_p = q\}$ is the number of partitions of
$q$ into parts of size at most $\eta$.  This is bounded by the total number of
partitions of $q$, which is $e^{O(\sqrt{q})}$, and by $q^\eta$.  Thus the above
sum over $q$ is bounded by
\begin{align*}&\sum_{1 \leq q < \eta/2} \exp\left(O(\sqrt{q}) -
    \frac{cq (\log n - \log \eta)}{2\log n}\right) 
\\&\qquad + \sum_{q = \eta/2}^{n^{5/4}} \exp\left( O(\sqrt{q}) - \frac{c\eta}{8}\right)
\\& \qquad + \sum_{q = n^{5/4}}^\infty \exp\left(\eta \log q -
  \frac{c'(n-\eta)\log q}{ \log n}\right) , \qquad c' > 0 
\end{align*} the first term is $O\left(e^{\frac{-c}{8}}\right)$ and the remaining terms
are $o(1)$ as $n \to \infty$. 

\end{proof}

\section{Contour formula for characters of
$SO(2n+1)$} \label{general contour SO(N)}
  We now generalize our integral formula for the character ratio at a rotation
 to give a $k$-fold integral formula for the
character ratios evaluated at a generic element of
$SO(2k)\subset SO(2n+1)$.  

\begin{theorem}
Denote $\{L_i, 1 \leq i \leq n\}$ the roots of $SO(2n+1)$.  Let
$\rho_\va$, $\va = a_1 \geq a_2 \geq\cdots \geq a_n \geq 0$, be
an irreducible representation of $SO(2n+1)$ corresponding to
highest weight $\sum a_i L_i$, and for $1 \leq i \leq n$, write $\tilde{a}_i = a_i +i - \frac{1}{2}$.   Let
$\vtheta = (\theta_1, ..., \theta_k) \in (\bR/2\pi \zed)^k$, $\theta_i \neq 0$, $\theta_i \neq \theta_j$ for $i < j$
be an element of a torus contained in $SO(2k) \subset
SO(2n+1)$. 
We have the integral character ratio formula
\[
 r_\va(\vtheta) = \frac{r_\va(\vtheta)}{d_\va} =
L(\vtheta) \operatorname*{\oint \cdots
\oint}_{\mathscr{C}_1 \times \cdots \times \mathscr{C}_k}
M_{\va; \vtheta}(\vz) d\vz,
\]
where the contours $\mathscr{C}_i$ are
chosen to have winding number 1 about each of the poles $\pm
\ta_i$ of the integrand
\[
 M_{\va; \vtheta}(\vz) = \prod_{j
= 1}^k M_{\va; \theta_j}(z_j)\times  \prod_{1 \leq j < \ell
\leq
k} \left(z_\ell^2 - z_j^2\right); \qquad M_{\va, \theta}(z) = \frac{\sin \theta z}{\prod_{j=1}^n \left(\tilde{a}_j^2 - z^2\right)}.
\] The leading term $L(\vtheta)$ is 
independent of the representation $\va$ and given by
\[
L(\vtheta) =
\frac{(2n-1)!(2n-3)!\cdots(2n-2k+1)!}{2^{2nk
-k^2}\prod_{j=1}^k 
\sin\left(\frac{\theta_j}{2}\right)^{2n-2k+1} \prod_{1 \leq i < j \leq k}
\left(\sin\left(\frac{\theta_i}{2}\right)^2 -
\sin\left(\frac{\theta_j}{2}\right)^2\right)}.
\]
 \end{theorem}

 \begin{proof}

 For $\vtheta \in \bT^n$ in general position on the maximal torus, 
the character
$\chi_\va(\vtheta)$ is given by the following determinantal
formula (\cite{fulton_harris}, p. 408, (24.28))
\footnote{$\tilde{0}_i = i-\frac{1}{2}$. Henceforth we write only $\rs$ for $\sin$. }
\[
 \chi_\va(\vtheta) = \left| 
\begin{array}{cccc}
\rs\left(\ta_1 \theta_1\right) & \rs\left(\ta_1 \theta_2\right) & \cdot & \rs
\left(\ta_1 \theta_n\right) \\
\rs\left(\ta_2 \theta_1\right) & \rs\left(\ta_2 \theta_2\right) & \cdot &
\rs\left(\ta_2 \theta_n\right) \\
 \cdot &\cdot &&\cdot  \\
\rs\left(\ta_n \theta_1\right) & \rs\left(\ta_n \theta_2\right) & \cdot & \rs
\left(\ta_n \theta_n\right)
\end{array}
\right|\Bigg/ 
\left| 
\begin{array}{cccc}
\rs\left(\tO_1 \theta_1\right) & \rs\left(\tO_1 \theta_2\right) & \cdot & \rs
\left(\tO_1 \theta_n\right) \\
\rs\left(\tO_2 \theta_1\right) & \rs\left(\tO_2 \theta_2\right) & \cdot &
\rs\left(\tO_2 \theta_n\right) \\
\cdot &\cdot &&\cdot  \\
\rs\left(\tO_n \theta_1\right) & \rs\left(\tO_n \theta_2\right) & \cdot & \rs
\left(\tO_n \theta_n\right)
\end{array}
\right|
\]
Since the dimension is equal to the character value at
$\vtheta = \vO$, we obtain the character
ratio at $\vtheta \in \bT^k$,
\begin{align*}
 r_\va(\vtheta) &= 
 \frac
{
\left| 
\begin{array}{cccccc}
\rs\left(\ta_1 \theta_1\right) &  \cdot& \rs
\left(\ta_1 \theta_k\right) & \rs\left(\ta_1 \epsilon_{k+1}\right) & \cdot&
\rs\left(\ta_1 \epsilon_{n}\right) \\
\rs\left(\ta_2 \theta_1\right) &  \cdot& \rs
\left(\ta_2 \theta_k\right) & \rs\left(\ta_2 \epsilon_{k+1}\right) & \cdot&
\rs\left(\ta_2 \epsilon_{n}\right)  \\
\cdot & &\cdot &\cdot & & \cdot\\
\rs\left(\ta_n \theta_1\right) &  \cdot& \rs
\left(\ta_n \theta_k\right) & \rs\left(\ta_n \epsilon_{k+1}\right) & \cdot&
\rs\left(\ta_n \epsilon_{n}\right) 
\end{array}
\right|
}
{
\left| 
\begin{array}{cccc}
\rs\left(\ta_1 \epsilon_{1}\right) & \rs\left(\ta_1 \epsilon_{2}\right) & \cdot&
\rs
\left(\ta_1 \epsilon_{n}\right) \\
\rs\left(\ta_2 \epsilon_{1}\right) & \rs\left(\ta_2 \epsilon_{2}\right) & \cdot&
\rs\left(\ta_2 \epsilon_{n}\right) \\
\cdot && & \cdot\\
\rs\left(\ta_n \epsilon_{1}\right) & \rs\left(\ta_n \epsilon_{2}\right) & \cdot&
\rs
\left(\ta_n \epsilon_{n}\right)
\end{array}
\right| 
} \times
\\
&\qquad \qquad\qquad \qquad \times 
\frac
{
\left| 
\begin{array}{cccc}
\rs\left(\tO_1 \epsilon_{1}\right) & \rs\left(\tO_1 \epsilon_{2}\right) & \cdot&
\rs\left(\tO_1 \epsilon_{n}\right) \\
\rs\left(\tO_2 \epsilon_{1}\right) & \rs\left(\tO_2 \epsilon_{2}\right) & \cdot&
\rs\left(\tO_2 \epsilon_{n}\right) \\
\cdot &&& \cdot\\
\rs\left(\tO_n \epsilon_{1}\right) & \rs\left(\tO_n \epsilon_{2}\right) & \cdot&
\rs\left(\tO_n \epsilon_{n}\right)
\end{array}
\right|
}{
\left| 
\begin{array}{cccccc}
\rs\left(\tO_1 \theta_1\right) &  \cdot& \rs
\left(\tO_1 \theta_k\right) & \rs\left(\tO_1 \epsilon_{k+1}\right) & \cdot&
\rs\left(\tO_1 \epsilon_{n}\right) \\
\rs\left(\tO_2 \theta_1\right) &  \cdot& \rs
\left(\tO_2 \theta_k\right) & \rs\left(\tO_2 \epsilon_{k+1}\right) & \cdot&
\rs\left(\tO_2 \epsilon_{n}\right)  \\
\cdot &&\cdot &\cdot && \cdot\\
\rs\left(\tO_n \theta_1\right) &  \cdot& \rs
\left(\tO_n \theta_k\right) & \rs\left(\tO_n \epsilon_{k+1}\right) & \cdot&
\rs\left(\tO_n \epsilon_{n}\right) 
\end{array}
\right|
},
\end{align*}
where $\epsilon_1, ..., \epsilon_n$ represent
infinitesimals tending to 0.

Let $\epsilon_1 \succ \epsilon_2 \succ\cdots \succ \epsilon_n$
where $a \succ b$ means $b$ is eventually smaller than any
fixed power of $a$ as $a$ and $b$ tend to 0. The reason for this choice is that, when determinants involving $\bepsilon$ are expanded as alternating sums below, we are able to neglect all but a single term.  

In the terms containing epsilon arguments, we expand 
$\sin$ in its power series about 0 performing row reduction to eliminate lower order terms, then take the limit to eliminate higher powers, with the result that we may write $r_\va =\frac{ R_\va}{R_\vO}$ where
\[
 R_\va = 
\frac{\left|
\begin{array}{ccccccc}
 \frac{\rs\left(\ta_1 \theta_1\right)}{\ta_1} & \cdot &
\frac{\rs\left(\ta_1\theta_k\right)}{\ta_1} & \left(-\ta_{1}^2\right)^{n-k-1}
& \left(-\ta_{1}^2\right)^{n-k-2} & \cdot & 1 \\
  \frac{\rs\left(\ta_2 \theta_1\right)}{\ta_2} & \cdot &
\frac{\rs\left(\ta_2\theta_k\right)}{\ta_2} & \left(-\ta_{2}^{2}\right)^{n-k-1}
& \left(-\ta_{2}^{2}\right)^{n-k-2} & \cdot & 1 \\
\cdot &&\cdot & \cdot &\cdot && \cdot
\\
 \frac{\rs\left(\ta_n \theta_1\right)}{\ta_n} & \cdot &
\frac{\rs\left(\ta_n\theta_k\right)}{\ta_n} & \left(-\ta_{n}^{2}\right)^{n-k-1}
& \left(-\ta_n^{2}\right)^{n-k-2} & \cdot & 1 
\end{array}
\right|
}
{\left|
\begin{array}{cccc}
 \left(-\ta_1^{2}\right)^{n-1} & \left(-\ta_1^{2}\right)^{n-2} & \cdot & 1\\
 \left(-\ta_2^{2}\right)^{n-1} & \left(-\ta_2^{2}\right)^{n-2} & \cdot &1\\
 \cdot & && \cdot \\
 \left(-\ta_n^{2}\right)^{n-1} & \left(-\ta_n^{2}\right)^{n-2} & \cdot & 1
\end{array}
\right|
}.
\]

We observe that the denominator is the Vandermonde
$ \prod_{1 \leq i <j \leq n} \left(\ta_i^2 - \ta_j^2\right).$  Expanding the first $k$ columns of the numerator, and dividing by the Vandermonde in the denominator, we obtain (note that the $\ta_i$ are increasing)
\begin{align*}
  R_\va(\vtheta) &=
\sum_{\substack{1 \leq i_1, ..., i_k \leq
n\\ \text{distinct}}}\frac{ (-1)^{ \sum_{1 \leq j < \ell \leq k} \one(i_j < i_\ell)} \prod_{j=1}^k \sin\left(\theta_j
\ta_{i_j}\right) \prod_{1 \leq j < \ell \leq k} \left|\ta_{i_j}^2 - \ta_{i_\ell}^2 \right| }{\prod_{j=1}^k\left( \ta_{i_j}\prod_{\ell
\neq i_j} \left(\ta_\ell^2 -\ta_{i_j}^2  \right)\right)}\\
&=
\sum_{\substack{1 \leq i_1, ..., i_k \leq
n\\ \text{distinct}}} \frac{\prod_{j=1}^k \sin\left(\theta_j
\ta_{i_j}\right)\prod_{1 \leq j < \ell \leq k} \left(\ta_{i_j}^2 -
\ta_{i_\ell}^2\right)} {\prod_{j=1}^k\left( \ta_{i_j}\prod_{\ell
\neq i_j} \left(\ta_\ell^2 -\ta_{i_j}^2  \right)\right)} = (-1)^{\binom{k}{2}}\oint
\cdots \oint M_{\va, \vtheta}(\vz) d\vz,
\end{align*}
since the factor $\prod_{1 \leq j < \ell  \leq k} \left(z_\ell^2 -
z_j^2\right)$ in $M_{\va, \vtheta}(\vz)$ produces the
 factor  \[(-1)^{\binom{k}{2}} \prod_{1 \leq j < \ell
\leq k} \left(\ta_{i_j}^2 - \ta_{i_\ell}^2\right)\] at each set of
poles.

Backtracking a bit, we determine the constant term by
\begin{align*}
 R_\vO &=  \frac{\epsilon_1^{2n-1}
\epsilon_2^{2n-3}\cdots\epsilon_k^{2n-2k+1}}{
(2n-1)!(2n-3)!\cdots(2n-2k+1)!}\frac{\det \left[\rs\left(\tO_i
\theta_j\right)\Big|_{j\leq k}\rs\left(\tO_i
\epsilon_{j}\right)\right]}
{\det\left[\rs\left(\tO_i
\epsilon_{j}\right)\right]}
\end{align*}
Since $\rs\left(\left(j-\frac{1}{2}\right)\theta\right)$ is a polynomial in
$\rs\left(\frac{\theta}{2}\right)$ with highest order term $(-4)^{j-1} \rs\left(
\frac{\theta}{2}\right)^{2j-1}$, performing row reductions we obtain
\[
 R_\vO = \frac{\epsilon_1^{2n-1}
\epsilon_2^{2n-3}\cdots\epsilon_k^{2n-2k+1}}{
(2n-1)!(2n-3)!\cdots(2n-2k+1)!}\frac{\det \left[\rs\left(\frac{
\theta_j}{2}\right)^{2i-1}\Big|_{j\leq k}\rs\left(
\frac{\epsilon_j}{2}\right)^{2i-1}\right]}
{\det\left[\rs\left(\frac{
\epsilon_j}{2}\right)^{2i-1}\right]}.
\]
Keeping in mind the relative size of the infinitesimals and expanding the determinants alternating sums we see that, up to sign, the numerator contributes the determinant of its lower $k\times k$ minor times a product of infinitesimals, while the denominator contributes a single product of infinitesimals.  This reduces the formula  to
\begin{align*}
 R_\vO &= \frac{(-1)^{\binom{k}{2}}\epsilon_1^{2n-1}\cdots\epsilon_k^{2n-2k+1}}{s\left(\frac{\epsilon_1}{2}\right)^{2n-1}
\cdots s\left(\frac{\epsilon_k}{2}\right)^{2n-2k+1}}\frac{\prod_{i=1}^k \left(s\left( \frac{\theta_i}{2} \right)^{2n - 2k + 1}\right)\det \left[s\left(\frac{\theta_j}{2} \right)^{2(i-1)} \right]_{i,j = 1}^k}{
(2n-1)!(2n-3)!\cdots(2n-2k+1)!}
\\&= \frac{(-1)^{\binom{k}{2}} 2^{2nk-k^2} \prod_{i=1}^k \left(s\left(\frac{\theta_i}{2} \right)^{2n - 2k + 1} \right)\prod_{1 \leq i < j \leq k} \left(s\left(\frac{\theta_i}{2}\right)^2 - s\left(\frac{\theta_j}{2}\right)^2\right)}{(2n-1)!(2n-3)!\cdots(2n-2k+1)!}.
\end{align*}

\end{proof}

\bibliography{Rosenthal}{}
\bibliographystyle{plain}

\end{document}